\documentclass[a4paper,10pt,american]{amsart}

\usepackage{dsfont}
\usepackage{stmaryrd}
\usepackage{esint}
\usepackage{amssymb}
\usepackage[square,sort,comma,numbers]{natbib}

\makeatletter
\newsavebox{\@brx}
\newcommand{\llangle}[1][]{\savebox{\@brx}{\(\m@th{#1\langle}\)}%
  \mathopen{\copy\@brx\kern-0.5\wd\@brx\usebox{\@brx}}}
\newcommand{\rrangle}[1][]{\savebox{\@brx}{\(\m@th{#1\rangle}\)}%
  \mathclose{\copy\@brx\kern-0.5\wd\@brx\usebox{\@brx}}}
\makeatother

\usepackage{amsmath}
\usepackage[latin1]{inputenc}
\usepackage[american]{babel}
\usepackage{xcolor}
\usepackage{natbib}
\usepackage{verbatim}

\usepackage[colorlinks,pdfpagelabels,pdfstartview = FitH,bookmarksopen
= true,bookmarksnumbered = true,linkcolor = blue,plainpages =
false,hypertexnames = false,citecolor = red,pagebackref=false]{hyperref}

\allowdisplaybreaks

\newtheorem{theorem}{Theorem}
\newtheorem{lemma}[theorem]{Lemma}

\theoremstyle{definition}
\newtheorem{definition}[theorem]{Definition}

\theoremstyle{remark}
\newtheorem{remark}[theorem]{Remark}

\numberwithin{theorem}{section}
\numberwithin{equation}{section}

\newcommand{\mollifytime}[2]{[\![ #1 ]\!]_{#2}}
\newcommand{\N}{\mathbb{N}}
\newcommand{\R}{\mathbb{R}}

\renewcommand{\d}{\mathrm{d}}
                  
\newcommand{\dx}{\mathrm{d}x}

\newcommand{\dt}{\mathrm{d}t}
\newcommand{\ds}{\mathrm{d}s}

\newcommand{\eps}{\varepsilon}
\renewcommand{\epsilon}{\varepsilon}
\renewcommand{\rho}{\varrho}
\renewcommand{\u}{\boldsymbol{u}}

\DeclareMathAlphabet{\mathpzc}{OT1}{pzc}{m}{it}

\DeclareMathOperator{\spt}{spt}

\DeclareMathOperator*{\osc}{osc}
\DeclareMathOperator{\Div}{div}
\DeclareMathOperator*{\esssup}{ess\,sup}
\DeclareMathOperator*{\essinf}{ess\,inf}
\DeclareMathOperator*{\essosc}{ess\,osc}
\DeclareMathOperator{\loc}{loc}

\makeatletter
\@namedef{subjclassname@2020}{%
  \textup{2020} Mathematics Subject Classification}
\makeatother

\def\Xint#1{\mathchoice
    {\XXint\displaystyle\textstyle{#1}}%
    {\XXint\textstyle\scriptstyle{#1}}%
    {\XXint\scriptstyle\scriptscriptstyle{#1}}%
    {\XXint\scriptscriptstyle\scriptscriptstyle{#1}}%
    \!\int}
\def\XXint#1#2#3{\setbox0=\hbox{$#1{#2#3}{\int}$}
    \vcenter{\hbox{$#2#3$}}\kern-0.5\wd0}

\def\dashint{\Xint{\raise4pt\hbox to7pt{\hrulefill}}}

\def\XXiint#1#2#3{\setbox0=\hbox{$#1{#2#3}{\iint}$}
    \vcenter{\hbox{$#2#3$}}\kern-0.5\wd0}


\subjclass[2020]{35B65, 35D30, 35K65, 35K67, 47J20}
\keywords{obstacle problem, porous medium equation}

\begin{document}
\title[H\"older regularity for obstacle problems]{On the H\"older regularity for obstacle problems to porous medium type equations}

\author[K. Moring]{Kristian Moring}
\address{Kristian Moring\\
Department of Mathematics and Systems Analysis, Aalto University\\
P.~O.~Box 11100, FI-00076 Aalto, Finland}
\email{kristian.moring@aalto.fi}

\author[L. Sch\"atzler]{Leah Sch\"atzler}
\address{Leah Sch\"atzler \\ 
Fachbereich Mathematik, Universit\"at Salzburg \\
Hellbrunner Str. 34, 5020 Salzburg, Austria}
\email{leahanna.schaetzler@plus.ac.at}

\begin{abstract}

We show that signed weak solutions to parabolic obstacle problems with porous medium type structure are locally H\"older continuous, provided that the obstacle is H\"older continuous.

\end{abstract}


\maketitle

\section{Introduction}
Let $\Omega_T := \Omega \times (0,T)$, where $\Omega \subset \R^n$ is an open set
and $0<T<\infty$.
In the present paper we are concerned with the obstacle problem to partial differential equations, whose prototype is the porous medium equation (PME for short)
$$
	\partial_t \big( |u|^{q-1} u \big) - \Delta u = 0
	\quad \text{in } \Omega_T
$$
with a parameter $q \in (0, \infty)$.
If $0<q<1$, the equation is degenerate and if $q>1$, it is singular.
More generally, for $q \in (0, \infty)$ we are concerned with
partial differential equations of the type
\begin{equation}
	\partial_t \big( |u|^{q-1} u \big) - \Div \mathbf{A}(x,t,u, \nabla u) = 0
	\quad \text{in } \Omega_T,
	\label{eq:pde}
\end{equation}
where $\mathbf{A} \colon \Omega_T \times \R \times \R^n \to \R^n$
is a Carath\'eodory function, i.e.~it is
measurable with respect to $(x,t) \in \Omega_T$ for all $(u, \xi) \in \R \times \R^n$
and continuous with respect to $(u, \xi) \in \R \times \R^n$ for a.e.~$(x,t) \in \Omega_T$.
Moreover, we assume that $\mathbf{A}$ satisfies the structure conditions
\begin{equation}
	\left\{
	\begin{array}{l}
		\mathbf{A}(x,t,u,\xi) \cdot \xi \geq C_o |\xi|^2, \\[5pt]
		|\mathbf{A}(x,t,u,\xi)| \leq C_1 |\xi|,
	\end{array}
	\right.
	\quad \text{for a.e. $(x,t) \in \Omega_T$ and all $(u, \xi) \in \R \times \R^n$,}
	\label{eq:structure}
\end{equation}
where $C_o, C_1 >0$ are given constants.
For the basic theory for the porous medium equation and its generalizations,
we refer to the monographs~\cite{DK,DGV,Vazquez_FDE,Vazquez_PME,WZY}.

We use a variational approach to define solutions to the obstacle problem to \eqref{eq:pde}
with an obstacle function $\psi \in C^0(\overline{\Omega_T})$. Heuristically, for a solution $u$ that is above the given obstacle $\psi$, the variational inequality
\begin{equation*}
	\iint_{\Omega_T} \left[ \partial_t ( |u|^{q-1} u )(v - u)
	+  \mathbf{A}(x,t,u,\nabla u)
	\cdot \nabla (v - u) \right] \, \d x \d t \geq 0,
\end{equation*}
must hold true for all comparison maps $v$ satisfying $v \geq \psi$. Since we do not assume regularity properties for $u$ in the time direction, the first term is defined more rigorously in the next section. Existence results for variational solutions to the obstacle problem to porous medium type equations can be found in~\cite{AL,BLS,Schaetzler1,Schaetzler2}.

At this stage, we will state our main result.
\begin{theorem}
Let $u$ be a locally bounded local weak solution to the obstacle problem
to \eqref{eq:pde} with $q \in (0,\infty)$ and structure conditions \eqref{eq:structure}
and a H\"older continuous obstacle function
$\psi \in C^{0; \beta, \frac{\beta}{2}}(\Omega_T)$ for some $\beta \in (0,1)$
in the sense of Definition~\ref{d.obstacle_weaksol}.
Then $u$ is locally H\"older continuous.
\end{theorem}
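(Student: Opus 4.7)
The plan is to establish a reduction of oscillation estimate on intrinsically scaled cylinders centered at an arbitrary $z_0 \in \Omega_T$ and iterate it along a geometric sequence of scales, following the De~Giorgi--DiBenedetto strategy for porous medium type equations adapted to the obstacle setting. For a radius $\varrho > 0$ one considers a parabolic cylinder $Q_\varrho$ whose time length is chosen intrinsically in terms of the local essential oscillation $\omega := \essosc_{Q_\varrho} u$ and $q$. Since \eqref{eq:pde} is degenerate for $q < 1$ and singular for $q > 1$, and since $|u|^{q-1}u$ is smooth away from zero but only H\"older regular in $u$ near $u=0$, the argument splits into several regimes according to the sign and magnitude of $u$ on $Q_\varrho$; the scaling is calibrated to each regime so that the resulting energy balance is homogeneous.

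The core step is a quantitative alternative on $Q_\varrho$: either the set where $u$ is close to its essential supremum $\mu^+$ has small relative measure, or the set where $u$ is close to $\mu^-$ does. In the first case, De~Giorgi iteration for the truncations $(u-k)_+$ on a sub-cylinder forces $\esssup u \leq \mu^+ - \eta\omega$; in the second, iteration for $(k-u)_+$ forces $\essinf u \geq \mu^- + \eta\omega$. Either way $\essosc_{Q_{\sigma\varrho}} u \leq (1-\eta)\omega$ for constants $\eta,\sigma \in (0,1)$ independent of $z_0$ and $\varrho$.

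The obstacle $\psi$ enters only through the upper alternative, because the test maps $v = \min\{u,k\}$ needed to derive energy estimates for $(u-k)_+$ satisfy $v \geq \psi$ only if $k \geq \sup_{Q_\varrho} \psi$. One therefore distinguishes two subcases. If $\sup_{Q_\varrho} \psi \leq \mu^+ - \omega/4$, truncation levels $k \in [\mu^+ - \omega/4,\mu^+)$ produce admissible comparison maps, the unconstrained energy estimate applies, and the upper De~Giorgi iteration proceeds as in the obstacle-free case. If instead $\sup_{Q_\varrho} \psi > \mu^+ - \omega/4$, then combining $u \geq \psi$ on $Q_\varrho$ with the parabolic H\"older modulus of $\psi$ on the (intrinsic) cylinder yields $\omega \lesssim \varrho^\beta$ directly, which already provides H\"older decay at this scale. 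Merging the two subcases one obtains $\essosc_{Q_{\sigma\varrho}} u \leq \max\{(1-\eta)\omega,\, C\varrho^\beta\}$; iterating over dyadic scales and unwinding the intrinsic scaling by means of the $L^\infty$ bound on $u$ then yields a H\"older modulus for $u$ with some exponent $\alpha \in (0,\beta]$.

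The main obstacle I expect is the rigorous derivation of the truncated energy inequality. Because the time regularity of $u$ is minimal, $\partial_t(|u|^{q-1}u)$ has to be interpreted via the time mollification $\mollifytime{\,\cdot\,}{h}$ underlying the variational definition, and the truncations used in De~Giorgi must be admissible both with respect to the obstacle and with respect to the mollification. This produces additional error terms involving $\partial_t \psi$ or its discrete-time counterparts, which will be controlled by the parabolic H\"older seminorm of $\psi$ and absorbed on the right-hand side. Handling this uniformly across the degenerate/singular and signed/unsigned regimes, and gluing the corresponding intrinsic scalings into a single oscillation decay valid for all $q \in (0,\infty)$, is where the bulk of the technical work will lie.
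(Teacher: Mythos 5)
Your high-level plan --- intrinsic cylinders, measure-theoretic alternatives, De~Giorgi iteration, obstacle entering only through the admissible truncation levels $k\geq\sup\psi$, and an iteration in which the obstacle's H\"older modulus competes with the oscillation --- is essentially the paper's strategy, and your observation that only the estimate for $(u-k)_+$ is constrained by $\psi$ while $(u-k)_-$ is unconstrained is exactly Lemma~\ref{l.caccioppoli}. Also your worry about extra error terms from $\partial_t\psi$ is actually avoidable: the comparison map $\mollifytime{u}{h}-(\mollifytime{u}{h}-k)_+ +\|\psi-\mollifytime{\psi}{h}\|_{L^\infty}$ is admissible and the added constant vanishes as $h\downarrow 0$, so no $\partial_t\psi$ terms survive.

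However, there is a genuine gap in the core dichotomy. You state that ``either the set where $u$ is close to $\mu^+$ has small relative measure, or the set where $u$ is close to $\mu^-$ does,'' and that in either case a De~Giorgi iteration for the corresponding truncation closes the argument. This is not a dichotomy: with $k=\mu^-+\tfrac12\omega=\mu^+-\tfrac12\omega$ the sets $\{u\leq k\}$ and $\{u>k\}$ partition the cylinder, so if the first has measure $>\nu|Q|$ then the second has measure $<(1-\nu)|Q|$ --- not small. De~Giorgi iteration via fast geometric convergence requires a \emph{small} starting measure, so in the ``second alternative'' neither truncation admits a direct De~Giorgi iteration. The paper handles this case by an entirely different mechanism (Section~\ref{s.2alternative}): a logarithmic estimate (Lemma~\ref{l.logarithmic_est}) is used to propagate measure information from one time slice to all later slices, then De~Giorgi's isoperimetric inequality (Lemma~\ref{l.isoperimetric_ineq}) shrinks the sub- or super-level sets over dyadic levels until the small-measure hypothesis of the auxiliary De~Giorgi lemma (Lemma~\ref{l.de_giorgi2_scheven2}) is met. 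Your proposal contains none of this; without it, the oscillation reduction cannot be established when neither set is small, and the proof does not go through.

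A secondary imprecision: the ``intrinsic'' time length depends on $\omega$, which depends on the cylinder. The paper resolves this circularity through a stopping-time choice of $\rho_o$ satisfying \eqref{e.rho_o}, together with an exponent $\gamma_o=\frac{2\beta}{2+\beta(1-q)_+}<\beta$ in the degenerate range; your claim that the obstacle-dominated subcase directly yields $\omega\lesssim\rho^\beta$ is correct only for $q\geq 1$, and in the degenerate case $q<1$ the exponent must be reduced to $\gamma_o$ because $\osc_{Q_{\rho,\theta\rho^2}}\psi\lesssim(\theta\rho^2)^{\beta/2}$ with $\theta=\omega^{q-1}$ large. These details would need to be fixed, but they are of a bookkeeping nature compared with the missing second-alternative argument.
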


The theory of H\"older continuity for porous medium type equations is well developed in the obstacle-free case. The first proof goes back to DiBenedetto and Friedman~\cite{DF}, in which non-negative solutions in the degenerate case were considered. The proof in the singular case can be found in~\cite{DGV} and for the treatment of signed solutions we mention~\cite{Liao}.
For more recent developments we refer to~\cite{BDG,BonforteSimonov,LiaoSystem,Mizuno}.
However, for the obstacle problem the theory is not complete yet. H\"older continuity was proven for quasilinear problems in~\cite{StruweVivaldi} and for problems with quadratic growth in~\cite{Choe}. In the case of porous medium type equations H\"older continuity for non-negative solutions to the obstacle problem has been treated in the recent papers~\cite{BLS_holder} and~\cite{Cho_Scheven}. The former concerns the degenerate case for the PME, and the latter the singular case for more general equations with structural conditions analogous to~\eqref{eq:structure}. However, especially the theory for signed solutions is missing, which we are addressing in this paper.

In our proof we use a similar strategy as in~\cite{BLS_holder} and~\cite{Cho_Scheven} for non-negative obstacles, which relies on a De Giorgi type iteration argument. The idea is to construct a sequence of cylinders shrinking to a common vertex. In each of these cylinders we consider measure theoretic alternatives, which we will call first and second alternative.
From these alternatives pointwise information for the solution can be deduced, cf.~Section~\ref{s.1alternative} in case of the first and in Section~\ref{s.2alternative} in case of the second alternative. In particular, we then show in Section~\ref{s.red-osc} that when passing to the subsequent cylinder in the sequence, the oscillation of the solution is reduced by a fixed amount. In the heart of the De Giorgi type iteration are energy estimates for truncations of solutions, which are stated and proved in Section~\ref{s.energyest}. In case of the second alternative, we also exploit the logarithmic estimates from Section~\ref{s.logest}. When deriving suitable energy or logarithmic estimates for a solution to the obstacle problem, additional attention has to be paid when using the comparison function depending on the solution itself. One has to guarantee that it is admissible, especially that it is sufficiently regular in time and stays above the given obstacle. While the energy estimate for truncations from above takes a similar form as in the obstacle-free case, the obstacle will play a role in the estimate for truncations from below. In the De Giorgi type iteration argument, this will be taken into account in the upper bound for the oscillation of $u$. Namely, such an upper bound should be sufficiently large compared to the oscillation of the obstacle.

In order to balance the inhomogeneous scaling behavior of the PME, we will work in cylinders which respect the intrinsic geometry of the equation. In particular, we will use cylinders of the form 
$$
Q_{\rho,\theta \rho^2} (x_o,t_o) := B_\rho (x_o) \times (t_o - \theta \rho^2, t_o),
$$
in which the scaling parameter $\theta$ is comparable to $|u|^{q-1}$. In contrast to the proof for the singular equations in~\cite{Cho_Scheven}, we treat both degenerate and singular cases with cylinders taking the same form (as in the obstacle-free case~\cite{Liao}). 

We will separate two different cases in the proof: when the solution $u$ is near zero, and when $u$ is away from zero. In the latter case, the equation behaves like a linear one. These cases are divided by comparing $u$ with its oscillation. Additional challenges in the case of signed solutions are given by the fact that when $u$ is near zero, the sign of $u$ may change in the cylinder considered. Especially, when applying the second alternative we use a technical argument to be able to avoid the set where $u$ becomes degenerate/singular. Furthermore, when dealing with the case where $u$ is negative and bounded away from zero, additional care is needed in the construction of cylinders.

We also point out that there is an alternative approach to the obstacle problem, in which the solution is defined as the smallest weak supersolution lying above the given obstacle $\psi$. This approach allows to consider fairly irregular obstacles as in~\cite{KLS}. For a study of the connection between these two different notions of solutions we refer to~\cite{AvLu}.

It would be interesting to obtain regularity up to the boundary when suitable boundary values are prescribed.
However, this is a topic for further research, since different techniques are required.

\medskip

\noindent
{\bf Acknowledgments.} K.~Moring has been supported by the Magnus Ehrnrooth Foundation and Foundation for Aalto University Science and Technology.

\section{Definitions and auxiliary results} \label{s.aux}
In order to give a formal definition of solutions, we consider the class of functions
$$
	K_\psi(\Omega_T) :=
	\left\{ v \in C^0((0,T);L_{\loc}^{q+1}(\Omega)):
	v \in L^2_{\loc}(0,T; H^1_{\loc}(\Omega)), v \geq \psi \text{ a.e. in } \Omega_T \right\}.
$$
Admissible comparison maps will be contained in the class of functions
$$
	K'_\psi(\Omega_T) :=
	\left\{ v \in K_\psi(\Omega_T): \partial_t v \in L^{q+1}_{\loc}(\Omega_T) \right\}.
$$

\begin{definition}\label{d.obstacle_weaksol}
We say that $u\in K_\psi(\Omega_T)$ is a local weak solution to the obstacle problem
associated with \eqref{eq:pde} if and only if
\begin{equation}
	\llangle \partial_t ( |u|^{q-1} u ), \varphi (v - u) \rrangle
	+ \iint_{\Omega_T} \mathbf{A}(x,t,u,\nabla u)
	\cdot \nabla \left( \varphi(v - u) \right) \, \d x \d t \geq 0
	\label{eq:weaksol}
\end{equation}
holds true for all comparison maps $v \in K'_\psi(\Omega_T)$
and every test function $\varphi \in C_0^\infty(\Omega_T;\R_{\geq 0})$.
The time term above is defined as
\begin{align*}
	\llangle \partial_t (|u|^{q-1} u), \varphi &(v - u)\rrangle \\
	:=
	&\iint_{\Omega_T} \left[ \partial_t \varphi \left( \frac{q}{q+1} |u|^{q+1}
	- |u|^{q-1} u v \right) - \varphi |u|^{q-1} u \partial_t v \right] \, \d x \d t.
\end{align*}
\end{definition}

\begin{remark}
Note that for $0<q< \frac{n+2}{(n-2)_+}$ every local weak solution is locally bounded. The proof follows the lines of~\cite{BLS_holder} in the degenerate case
and~\cite{Cho_Scheven} in the singular case.
\end{remark} 

For $z_o = (x_o,t_o) \in \Omega_T$, we will work with cylinders of the form
$$
Q_{\rho,s} (z_o) := B_{\rho}(x_o) \times (t_o-s, t_o).
$$
For $b \in \R$ and $\alpha > 0$ we denote the signed $\alpha$-power of $b$ by
\[ \boldsymbol{b}^\alpha :=
\begin{cases}
|b|^{\alpha -1} b &\text{if } b \neq 0,\\
0 &\text{if } b = 0. 
\end{cases}\]

We will exploit the following mollification in time.
For $v \in L^1(\Omega_T)$ and $h>0$, define
\begin{align*}
\mollifytime{v}{h}(x,t)
	:= \frac 1 h \int_0^t e^{\frac{s-t}{h}} v(x,s)\,  \mathrm{d}s.
\end{align*}

We collect some useful properties of the mollification in the following lemma, see~\cite[Lemma 2.9]{Kinnunen-Lindqvist} and~\cite[Appendix B]{BDM:pq}. 
\begin{lemma} \label{lem:mollifier}
Let $v$ and $\mollifytime{v}{h}$ be as above and $p \geq 1$. Then the following properties hold:
\begin{itemize}
\item[(i)]
If $v \in L^p(\Omega_T)$, then 
\begin{equation*}
\mollifytime{v}{h} \to v\quad \text{ in } L^p(\Omega_T) \text{ as } h \to 0.  
\end{equation*}
\item[(ii)]
Let $v\in L^p(0,T; W^{1,p}(\Omega))$. Then
\begin{equation*}
\mollifytime{v}{h}\to v\quad \text{ in } L^p(0,T;W^{1,p}(\Omega)) \text{ as } h \to 0.
\end{equation*}
\item[(iii)]
If $v \in C^0(\overline{\Omega_T})$ and $\Omega \subset \R^n$ is a bounded set, then 
$$
\mollifytime{v}{h} \to v \quad \text{ uniformly in }\Omega_T\ \text{ as }\ h \to 0.
$$
\item[(iv)]
The weak time derivative $\partial_t \mollifytime{v}{h}$ exists in $\Omega_T$ and is given by the formula
\begin{equation*}
  \partial_t \mollifytime{v}{h} = \frac{1}{h} ( v - \mollifytime{v}{h} ).
\end{equation*}
\end{itemize}
\end{lemma}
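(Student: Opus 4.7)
The plan is to prove the four properties in the order (iv), (iii), (i), (ii), since each relies on or parallels the previous one. For (iv), I would write the mollification as $\mollifytime{v}{h}(x,t) = \frac{e^{-t/h}}{h}\int_0^t e^{s/h} v(x,s)\, \ds$. Since $v \in L^1(\Omega_T)$, for a.e.\ $x$ the map $t \mapsto \int_0^t e^{s/h} v(x,s)\, \ds$ is absolutely continuous by Fubini and the Lebesgue differentiation theorem, and differentiating via the product rule yields the pointwise identity $\partial_t \mollifytime{v}{h}(x,t) = \tfrac{1}{h}\bigl(v(x,t) - \mollifytime{v}{h}(x,t)\bigr)$ for a.e.\ $(x,t)$. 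A Fubini-based integration by parts against any test function confirms this is also the weak $t$-derivative on $\Omega_T$.

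For (iii), assume $v \in C^0(\overline{\Omega_T})$ with $\Omega$ bounded, so that $v$ is uniformly continuous and bounded. Using $\frac{1}{h}\int_0^t e^{(s-t)/h}\, \ds = 1 - e^{-t/h}$, one rewrites $\mollifytime{v}{h}(x,t) - v(x,t) = \frac{1}{h}\int_0^t e^{(s-t)/h}\bigl(v(x,s) - v(x,t)\bigr)\, \ds - e^{-t/h} v(x,t)$. For $\eta > 0$, picking $\delta > 0$ from uniform continuity and splitting the integral at $s = t - \delta$ bounds the near-diagonal piece by $\eta$ and the far piece by $2\|v\|_\infty e^{-\delta/h}$. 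The boundary term $e^{-t/h} v(x,t)$ is the chief obstacle: it vanishes pointwise for $t > 0$ but not uniformly down to $t = 0$, so the statement is naturally understood as uniform convergence on $\overline{\Omega} \times [\tau, T]$ for each $\tau > 0$, as in \cite[Lemma 2.9]{Kinnunen-Lindqvist}.

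For (i), the key ingredient is the contraction $\|\mollifytime{v}{h}\|_{L^p(\Omega_T)} \leq \|v\|_{L^p(\Omega_T)}$, which follows from Jensen's inequality applied to the probability measure $\frac{e^{(s-t)/h}}{h(1 - e^{-t/h})}\, \ds$ on $[0, t]$, giving the pointwise bound $|\mollifytime{v}{h}(x,t)|^p \leq \frac{1}{h}\int_0^t e^{(s-t)/h}|v(x,s)|^p\, \ds$, followed by Fubini in $(x,t)$. Convergence in $L^p$ then follows from the standard density argument: for $\eps > 0$, pick $w \in C_c(\Omega_T)$ with $\|v - w\|_{L^p(\Omega_T)} < \eps$, use the contraction to estimate $\|\mollifytime{v-w}{h}\|_{L^p}$, and apply (iii) to $w$ — which vanishes near $t = 0$ and hence converges uniformly on all of $\Omega_T$, sidestepping the initial-time issue of the previous step. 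Finally, (ii) reduces to applying (i) separately to $v$ and to each component of $\nabla v$, using the elementary commutation $\nabla_x \mollifytime{v}{h} = \mollifytime{\nabla_x v}{h}$ which follows at once from differentiating under the integral sign in the definition.
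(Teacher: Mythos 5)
Your argument is essentially correct, but note that the paper does not prove this lemma at all: it simply cites \cite[Lemma 2.9]{Kinnunen-Lindqvist} and \cite[Appendix B]{BDM:pq}, so there is no in-paper proof to compare against. Your self-contained route is the standard one from those references: (iv) by absolute continuity in $t$ and the product rule applied to $e^{-t/h}\int_0^t e^{s/h}v\,\ds$; the $L^p$-contraction $\|\mollifytime{v}{h}\|_{L^p}\leq\|v\|_{L^p}$ via Jensen's inequality with respect to the exponential probability kernel plus Fubini; (i) by density of $C_c(\Omega_T)$ together with the contraction; and (ii) by the commutation $\nabla \mollifytime{v}{h}=\mollifytime{\nabla v}{h}$, which indeed follows from Fubini against spatial test functions. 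Two remarks. First, your observation about (iii) is well taken: with the initial-value-free definition used here, the term $e^{-t/h}v(x,t)$ prevents uniform convergence up to $t=0$ whenever $v(\cdot,0)\not\equiv 0$ (take $v\equiv 1$, where $\mollifytime{v}{h}=1-e^{-t/h}$), so the literal statement holds only on $\overline{\Omega}\times[\tau,T]$ for each $\tau>0$; the cited references avoid this by including the initial datum $e^{-t/h}v(\cdot,0)$ in the definition of the mollification. Your repair is correct and, crucially, the weakened (iii) suffices for your density step in (i), since $w\in C_c(\Omega_T)$ vanishes near $t=0$. Second, in that density step you should note that $\mollifytime{w}{h}$ inherits the compact spatial support of $w$, so uniform convergence upgrades to $L^p$ convergence even when $\Omega$ is unbounded; this is a one-line addition. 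With these points made explicit, the proof is complete.
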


In this section we recall some standard results needed in the proofs. We begin with a special case of the Sobolev inequality, cf.~\cite[Chapter I, Proposition 3.1]{DB}.
\begin{lemma} \label{l.sobolev}
Let $v \in L^2(0,T; W^{1,2}_0(\Omega))$. Then there exists $c = c(n) > 0$ such that 
\begin{equation*}
\iint_{\Omega_T} |v|^{\frac{2(n+2)}{n}} \, \d x \d t \leq c \left( \iint_{\Omega_T} \left| \nabla v \right|^2 \, \d x \d t \right) \left( \esssup_{0<t<T} \int_{\Omega} |v|^2 \, \d x \right)^\frac{2}{n}.
\end{equation*}
\end{lemma}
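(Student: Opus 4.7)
The plan is to prove this standard parabolic Sobolev inequality by a slice-by-slice interpolation argument combined with the classical elliptic Sobolev embedding in space. For $n \geq 3$, since $v(\cdot,t) \in W^{1,2}_0(\Omega)$ for a.e.\ $t \in (0,T)$, the Sobolev embedding yields $\|v(\cdot,t)\|_{L^{2^*}(\Omega)} \leq c(n) \|\nabla v(\cdot,t)\|_{L^2(\Omega)}$ with Sobolev exponent $2^* = 2n/(n-2)$. The target exponent $q := 2(n+2)/n$ lies strictly between $2$ and $2^*$, so by H\"older interpolation one obtains on each time slice
$$
\|v(\cdot,t)\|_{L^q(\Omega)} \leq \|v(\cdot,t)\|_{L^2(\Omega)}^{1-\theta} \|v(\cdot,t)\|_{L^{2^*}(\Omega)}^{\theta},
$$
where $\theta \in (0,1)$ is determined by the relation $\frac{1}{q} = \frac{1-\theta}{2} + \frac{\theta}{2^*}$.

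The algebra step is the key calculation. Solving for $\theta$ yields $\theta = n/(n+2)$, so $q\theta = 2$ and $q(1-\theta) = 4/n$. These are precisely the exponents needed: raising the interpolation inequality to the power $q$ and integrating in time gives
$$
\iint_{\Omega_T} |v|^{q} \, \d x \d t
\leq \int_0^T \|v(\cdot,t)\|_{L^2(\Omega)}^{4/n} \|v(\cdot,t)\|_{L^{2^*}(\Omega)}^{2} \, \d t.
$$
I would then pull the $L^2$ factor out of the time integral as $(\esssup_t \|v(\cdot,t)\|_{L^2(\Omega)}^2)^{2/n}$ and control the remaining factor $\|v(\cdot,t)\|_{L^{2^*}(\Omega)}^2$ by the gradient via the spatial Sobolev embedding to arrive at
$$
\iint_{\Omega_T} |v|^{q} \, \d x \d t
\leq c(n) \biggl( \esssup_{0<t<T} \int_{\Omega} |v|^2 \, \d x \biggr)^{2/n} \iint_{\Omega_T} |\nabla v|^2 \, \d x \d t,
$$
which is the claim.

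The low-dimensional cases $n = 1, 2$ require only minor modifications: for $n=2$ one replaces $L^{2^*}$ by $L^{r}$ for any $r<\infty$ with the embedding $W^{1,2}_0(\Omega) \hookrightarrow L^r(\Omega)$ and re-solves the interpolation identity (the resulting exponent $q = 2(n+2)/n = 4$ still falls in the valid range), while for $n=1$ one uses $W^{1,2}_0(\Omega) \hookrightarrow L^\infty(\Omega)$. There is no genuine obstacle in the proof since this is a classical result; the only subtle point is the dimensional arithmetic that forces $q = 2(n+2)/n$ to be precisely the exponent at which the $\esssup$-in-time of the $L^2$-norm (to the power $4/n$) and the full space-time $L^2$-norm of the gradient (to the power $2$) combine with matched scaling.
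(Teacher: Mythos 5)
For $n \ge 3$ your argument is correct and is essentially the standard one (the paper gives no proof of this lemma, citing DiBenedetto, Chapter I, Proposition 3.1, whose argument runs along the same interpolation lines): extending each slice $v(\cdot,t)\in W^{1,2}_0(\Omega)$ by zero to $\R^n$, the Sobolev inequality holds with $c=c(n)$ independent of $\Omega$, the interpolation exponent $\theta=\tfrac{n}{n+2}$ gives $q\theta=2$ and $q(1-\theta)=\tfrac4n$, and pulling out the essential supremum in time yields the claim.

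However, your treatment of the cases $n=1,2$ as described would fail, and these cases matter because the lemma is stated with $c=c(n)$ and is applied in the paper on cylinders of all scales. In $n=2$ the embedding $W^{1,2}_0(\Omega)\hookrightarrow L^r(\Omega)$ cannot hold with a domain-independent constant: for $v_\lambda(x):=v(x/\lambda)$ one has $\|\nabla v_\lambda\|_{L^2}=\|\nabla v\|_{L^2}$ while $\|v_\lambda\|_{L^r}=\lambda^{2/r}\|v\|_{L^r}$, so the constant necessarily degrades with the size of $\Omega$. Moreover, even granting such an embedding, the exponent bookkeeping does not close: solving $\tfrac14=\tfrac{1-\theta}{2}+\tfrac{\theta}{r}$ gives $4\theta=\tfrac{2r}{r-2}>2$ for every finite $r$, so after integrating in time you are left with $\int_0^T\|\nabla v(\cdot,t)\|_{L^2}^{4\theta}\,\dt$, which is not controlled by $\iint_{\Omega_T}|\nabla v|^2\,\dx\dt$. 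The same scaling objection applies to $W^{1,2}_0\hookrightarrow L^\infty$ in $n=1$. The correct repair is to replace ``embedding plus H\"older interpolation'' by the multiplicative Gagliardo--Nirenberg inequality
\begin{equation*}
\|w\|_{L^{\frac{2(n+2)}{n}}(\R^n)}
\le c(n)\,\|\nabla w\|_{L^2(\R^n)}^{\frac{n}{n+2}}\,\|w\|_{L^2(\R^n)}^{\frac{2}{n+2}},
\end{equation*}
valid for all $n\ge 1$ (Ladyzhenskaya's inequality for $n=2$, and the one-dimensional estimate $\|w\|_{L^\infty}^2\le 2\|w\|_{L^2}\|w'\|_{L^2}$ for $n=1$), applied to the zero extension of each time slice; raising it to the power $q=\tfrac{2(n+2)}{n}$ produces exactly the factors $\|\nabla v(\cdot,t)\|_{L^2}^2\|v(\cdot,t)\|_{L^2}^{4/n}$, after which the proof proceeds as in your main case, uniformly in $n$.
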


We also make use of De Giorgi's isoperimetric inequality and so called fast geometric convergence~\cite[Chapter I, Lemma 2.2]{DB},~\cite[Lemma 7.1]{Giusti}, which we state next.

\begin{lemma} \label{l.isoperimetric_ineq}
Let $k<l$ be real numbers and $B_\rho(x_o) \subset \R^n$. Then for any $v\in W^{1,1}(B_\rho(x_o))$ there exists a constant $c = c(n) > 0$ such that 
$$
(l-k) \left| B_\rho(x_o) \cap \{ v>l \} \right| \leq \frac{c \rho^{n+1}}{\left| B_\rho(x_o) \cap \{ v<k \} \right|} \int_{B_\rho(x_o) \cap \{ k<v<l \}} \left| \nabla v \right| \, \d x
$$
\end{lemma}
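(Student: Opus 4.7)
The plan is to apply the classical De Giorgi truncation argument combined with a Riesz potential representation for $W^{1,1}$ functions. Assume first $|A| := |B_\rho(x_o) \cap \{v<k\}| > 0$, since otherwise the right-hand side is understood as $+\infty$. I would introduce the double truncation
\[
w := \min\{l, \max\{k, v\}\} - k,
\]
which lies in $W^{1,1}(B_\rho(x_o))$, vanishes on $A$, equals $l-k$ on $B := B_\rho(x_o) \cap \{v>l\}$, and satisfies $|\nabla w| = |\nabla v| \chi_{\{k<v<l\}}$ a.e. The core tool is the pointwise bound
\[
|w(x) - (w)_{B_\rho}| \leq c(n) \int_{B_\rho(x_o)} \frac{|\nabla w(y)|}{|x-y|^{n-1}} \dy \quad \text{for a.e.\ } x \in B_\rho(x_o),
\]
where $(w)_{B_\rho}$ is the average of $w$ on $B_\rho(x_o)$; this representation follows from averaging the identity $w(x)-w(y) = \int_0^1 \nabla w(y+t(x-y))\cdot(x-y)\dt$ over $y \in B_\rho(x_o)$.

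Next I would control the mean from above. Because $w \equiv 0$ on $A$,
\[
(w)_{B_\rho} = \frac{1}{|A|} \int_A \bigl( (w)_{B_\rho} - w(x) \bigr) \dx \leq \frac{c}{|A|} \int_A \int_{B_\rho(x_o)} \frac{|\nabla w(y)|}{|x-y|^{n-1}} \dy \dx,
\]
and the Fubini computation $\int_A |x-y|^{1-n} \dx \leq \int_{B_\rho(y)} |x-y|^{1-n} \dx \leq c(n) \rho$ yields
\[
(w)_{B_\rho} \leq \frac{c \rho}{|A|} \int_{B_\rho(x_o) \cap \{k<v<l\}} |\nabla v| \dx.
\]

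Integrating the representation formula over the set $B$, where $w=l-k$, gives
\[
(l-k)|B| = \int_B w \dx \leq |B|\,(w)_{B_\rho} + c\int_B \int_{B_\rho(x_o)} \frac{|\nabla w(y)|}{|x-y|^{n-1}}\dy \dx,
\]
and the same Fubini estimate applied over $B$ bounds the double integral by $c\rho \int_{B_\rho(x_o)\cap\{k<v<l\}}|\nabla v|\dx$. Substituting the bound on $(w)_{B_\rho}$ and using $|A|+|B|\leq |B_\rho(x_o)| = \omega_n \rho^n$,
\[
(l-k)|B| \leq c\rho\,\frac{|A|+|B|}{|A|} \int_{B_\rho(x_o)\cap\{k<v<l\}}|\nabla v|\dx \leq \frac{c\rho^{n+1}}{|A|} \int_{B_\rho(x_o)\cap\{k<v<l\}}|\nabla v|\dx,
\]
which is the claimed inequality.

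The only genuine ingredient is the Riesz potential estimate for $W^{1,1}$ functions on a ball, which is entirely classical and could simply be quoted; everything else is Fubini and the elementary geometric bound $\int_{B_\rho}|x-y|^{1-n}\dx \leq c\rho$. The one step that deserves attention is extracting the correct power of $\rho$ on the right: it comes from combining the two $\rho$-factors produced by Fubini with the bound $|A|+|B|\leq |B_\rho(x_o)|\sim\rho^n$, and is where the specific form $\rho^{n+1}/|A|$ emerges.
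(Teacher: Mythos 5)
Your proof is correct. Note that the paper itself does not prove this lemma at all; it simply quotes it from DiBenedetto (Chapter I, Lemma 2.2) and Giusti (Lemma 7.1), so the relevant comparison is with the classical argument in those references. That argument uses the same double truncation $w$, but instead of passing through the full-ball mean it writes, for a.e.\ $x$ with $v(x)>l$, the identity $(l-k)\,|\{v<k\}\cap B_\rho| \leq \int_{\{v<k\}\cap B_\rho} |w(x)-w(y)|\,\dy$, estimates $|w(x)-w(y)|$ by the line integral of $|\nabla w|$, and converts to polar coordinates to produce the Riesz potential $\int_{B_\rho}|\nabla w(z)|\,|x-z|^{1-n}\,\dz$ multiplied by $c\rho^{n}$; integrating over $\{v>l\}\cap B_\rho$ and using $\int_{B_\rho}|x-z|^{1-n}\,\dx\leq c(n)\rho$ gives the factor $\rho^{n+1}$ in one stroke. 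Your version detours through the mean value $(w)_{B_\rho}$ via the standard $W^{1,1}$ Riesz potential estimate on a ball, and then recovers the factor $\rho^{n}$ from $|A|+|B|\leq|B_\rho|$ at the end; this is a legitimate and equally elementary reorganization of the same machinery, and all constants indeed depend only on $n$. Two small points: the containment you invoke in the Fubini step should read $A\subset B_{2\rho}(y)$ (or argue by rearrangement that $\int_A|x-y|^{1-n}\,\dx$ is maximized when $A$ is a ball centered at $y$ of the same measure), not $A\subset B_\rho(y)$ --- harmless for a $c(n)$ bound; and your convention for the degenerate case $|B_\rho(x_o)\cap\{v<k\}|=0$ is the usual one and matches how the lemma is applied in the paper, where that measure is bounded below.
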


\begin{lemma} \label{l.fgc}
Suppose that $\{Y_i\}_{i\in \N_0}$ is a sequence of positive real numbers that satisfy
$$
Y_{i+1} \leq CB^i Y_i^{1+ \sigma}\quad \text{ for all } i \geq 0,
$$
with constants $C,\sigma > 0$ and $B>1$. Then $Y_i \to 0$ as $i \to \infty$ whenever
$$
Y_0 \leq C^{- \frac{1}{\sigma}} B^{- \frac{1}{\sigma^2}}.
$$
\end{lemma}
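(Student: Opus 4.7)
The plan is to prove by induction the stronger quantitative estimate
$$
Y_i \leq Y_0 \, B^{-i/\sigma} \quad \text{for all } i \in \N_0,
$$
from which the conclusion $Y_i \to 0$ follows immediately since $B>1$. The exponent $-1/\sigma$ in the geometric rate is not arbitrary; it is dictated by the condition on $Y_0$, and guessing this ansatz is really the only creative step.

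The base case $i=0$ reduces to the tautology $Y_0 \leq Y_0$. For the inductive step, assume the bound at level $i$ and substitute into the hypothesized recursion:
$$
Y_{i+1} \leq C B^i Y_i^{1+\sigma} \leq C B^i \bigl(Y_0 B^{-i/\sigma}\bigr)^{1+\sigma} = C\, Y_0^{\sigma} \cdot Y_0\, B^{\,i - i(1+\sigma)/\sigma} = C Y_0^{\sigma}\cdot Y_0 B^{-i/\sigma}.
$$
To close the induction at level $i+1$, it suffices to check that the prefactor $C Y_0^{\sigma}$ is at most $B^{-1/\sigma}$, i.e.\ $Y_0 \leq C^{-1/\sigma} B^{-1/\sigma^2}$, which is precisely the hypothesis. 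Hence $Y_{i+1} \leq Y_0 B^{-(i+1)/\sigma}$, completing the induction.

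There is no genuine obstacle here: the argument is elementary and the only subtlety is aligning the exponents of $B$ so that the geometric rate $B^{-1/\sigma}$ survives in the iterate and matches the smallness threshold $C^{-1/\sigma}B^{-1/\sigma^2}$ imposed on $Y_0$. Since $B^{-1/\sigma} < 1$, the bound $Y_i \leq Y_0 B^{-i/\sigma}$ forces $Y_i \to 0$ as $i \to \infty$, which is the claim.
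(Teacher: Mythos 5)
Your proof is correct: the induction with the ansatz $Y_i \leq Y_0 B^{-i/\sigma}$ closes exactly because the smallness condition $Y_0 \leq C^{-1/\sigma} B^{-1/\sigma^2}$ is equivalent to $C Y_0^{\sigma} \leq B^{-1/\sigma}$, and $B>1$ then gives $Y_i \to 0$. The paper does not prove this lemma but cites it from the literature, and your argument is precisely the standard one given in those references, so there is nothing further to compare.
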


\section{Energy estimates} \label{s.energyest}
For $w,k \in \R$ let us define 
\begin{align*}
\mathfrak{g}_\pm(w,k) := \pm q \int_k^w  |s|^{q-1} (s-k)_\pm \, \d s.
\end{align*}

The following estimates follow from the definition above, see e.g.~\cite[Lemma 2.2]{BDL}.

\begin{lemma}\label{est:calg}
There exists a constant $c = c(q) > 0$ such that for all $w,k \in \R$ and $q > 0$, the inequality
$$
\frac{1}{c} \left( |w| + |k| \right)^{q-1} (w-k)^2_\pm \leq \mathfrak g_\pm(w,k) \leq c \left( |w| + |k| \right)^{q-1} (w-k)^2_\pm
$$
holds true.
\end{lemma}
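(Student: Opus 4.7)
The plan is to reduce to $\mathfrak g_+$, use an affine change of variable, and handle the signs of $w, k$ separately. First, the substitution $s \mapsto -s$ inside the defining integral shows $\mathfrak g_-(w,k) = \mathfrak g_+(-w,-k)$, so the $\mathfrak g_-$ bound follows from the $\mathfrak g_+$ bound (the right-hand side of the inequality is invariant under $(w,k) \mapsto (-w,-k)$ up to swapping $\pm$). If $w \leq k$ both sides vanish, so I may assume $w > k$. The affine change of variable $s = k + t(w-k)$, $t \in [0,1]$, then yields
\begin{equation*}
  \mathfrak g_+(w,k) = q(w-k)^2 \int_0^1 |k + t(w-k)|^{q-1}\, t \, \d t,
\end{equation*}
reducing the claim to showing that the remaining integral is comparable, with constants depending only on $q$, to $(|w|+|k|)^{q-1}$.

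I would then split into three subcases. When $k \geq 0$ (and symmetrically when $w \leq 0$, treated via $s \mapsto -s$), $|s|^{q-1}$ is monotone on $[k,w] \subseteq [0,\infty)$ and $|w|+|k| = w + k$. The lower bound is obtained by restricting the integration to $t \in [\tfrac12, 1]$, on which $k + t(w-k) \in [\tfrac{w+k}{2}, w]$ forces $|s|^{q-1} \geq c_q (w+k)^{q-1}$ by monotonicity. The upper bound is straightforward for $q \geq 1$ via $|s|^{q-1} \leq (w+k)^{q-1}$; for $q < 1$ one computes the antiderivative explicitly and reduces matters to checking that the resulting polynomial $q - (q+1)r + r^{q+1}$ (with $r := k/w$) is comparable to $(q+1)(1+r)^{q-1}(1-r)^2$ uniformly on $[0,1]$, which follows from a Taylor expansion at $r = 1$.

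The remaining mixed-sign subcase $k < 0 < w$ is the delicate one: for $q < 1$ the integrand $|s|^{q-1}$ has an integrable singularity at the origin. Here I would split $\int_k^w = \int_k^0 + \int_0^w$ and evaluate each integral in closed form, arriving at
\begin{equation*}
  \mathfrak g_+(w,k) = \frac{|k|^{q+1}}{q+1} + \frac{q\, w^{q+1}}{q+1} + |k|\, w^{q}.
\end{equation*}
Young's inequality applied to the cross term $|k|\,w^{q}$ (with conjugate exponents $q+1$ and $(q+1)/q$) then shows that $\mathfrak g_+(w,k) \asymp_q |k|^{q+1} + w^{q+1} \asymp (w+|k|)^{q+1}$; and $(w+|k|)^{q+1}$ equals $(|w|+|k|)^{q-1}(w-k)^2$ in this regime, since $w - k = w + |k|$. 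The main obstacle I expect is the case $q < 1$, where simple pointwise bounds on the singular factor $|s|^{q-1}$ are too crude and one must pass through closed-form computation and a continuity/asymptotics argument.
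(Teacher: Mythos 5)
The paper gives no proof of this lemma; it is cited verbatim from B\"ogelein--Duzaar--Liao \cite{BDL}, Lemma 2.2. So there is nothing to compare against directly, and your argument has to be judged on its own terms.

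Your proof is correct and self-contained. The reduction to $\mathfrak g_+$ via $\mathfrak g_-(w,k)=\mathfrak g_+(-w,-k)$ is valid, as is the normalization $\mathfrak g_+(w,k)=q(w-k)^2\int_0^1|k+t(w-k)|^{q-1}t\,\d t$, reducing matters to showing the remaining integral is $\asymp_q(|w|+|k|)^{q-1}$. In the regime $k\geq 0$ your lower bound via $t\in[\tfrac12,1]$ works for both $q\geq 1$ (monotonicity gives $|s|^{q-1}\geq 2^{1-q}(w+k)^{q-1}$) and $q<1$ ($|s|\leq w\leq w+k$ gives $|s|^{q-1}\geq(w+k)^{q-1}$). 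The upper bound for $q\geq 1$ is immediate, and for $q<1$ your closed-form computation is correct: one finds $\mathfrak g_+(w,k)=\tfrac{w^{q+1}}{q+1}\bigl(q-(q+1)r+r^{q+1}\bigr)$ with $r=k/w\in[0,1]$, and a Taylor expansion at $r=1$ shows this vanishes to second order with coefficient $\tfrac{q(q+1)}{2}$, matching the $(q+1)(1+r)^{q-1}(1-r)^2$ comparison up to the finite positive ratio $q/2^q$; since both sides are continuous and positive on $[0,1)$ the two-sided bound follows. The mixed case $k<0<w$ is also right: the closed form $\mathfrak g_+(w,k)=\tfrac{|k|^{q+1}}{q+1}+\tfrac{qw^{q+1}}{q+1}+|k|w^q$ is correct, Young's inequality bounds the cross term by the pure powers, and $(|w|+|k|)^{q-1}(w-k)^2=(w+|k|)^{q+1}$ in this regime since $w-k=w+|k|$ and $|w|=w$.

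One small caution about the phrase ``symmetrically when $w\leq 0$, treated via $s\mapsto -s$.'' That substitution sends $\mathfrak g_+(w,k)$ to $\mathfrak g_-(-w,-k)$, not to a $\mathfrak g_+$ with nonnegative arguments, so it does not literally reduce the case $k<w\leq 0$ to the already-treated case $0\leq k<w$ (and trying to re-convert $\mathfrak g_-$ back to $\mathfrak g_+$ via the same identity is circular). What does work is to run the monotonicity argument directly: for $k<w\leq 0$ the quantity $|k+t(w-k)|=|k|-t(w-k)$ decreases from $|k|$ to $|w|$, so for the lower bound one restricts to $t\in[0,\tfrac12]$ when $q\geq 1$ and to $t\in[\tfrac12,1]$ when $q<1$, and for the upper bound with $q<1$ one obtains the analogous polynomial $1-(q+1)\sigma^q+q\sigma^{q+1}$ in $\sigma=|w|/|k|$, which vanishes to second order at $\sigma=1$ just as before. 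This is the same \emph{style} of argument but not the same computation, so I would rephrase that parenthetical rather than leaving the impression of a literal reduction.
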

Next, we give energy estimates for weak solutions to the obstacle problems.
Note that in the estimate involving $(u-k)_+$ only levels $k$ larger than
the obstacle function are admissible,
whereas there is no restriction on the admissible levels
in the estimate involving $(u-k)_-$.
\begin{lemma} \label{l.caccioppoli}
Let $z_o = (x_o,t_o) \in \Omega_T$ and $Q_{\rho,s}(z_o) \Subset \Omega_T$.
Further, for $\psi \in C^0(\Omega_T)$ we assume that $u \in K_\psi(\Omega_T)$ is a local weak solution
to the obstacle problem to \eqref{eq:pde} with structure conditions \eqref{eq:structure}
in the sense of Definition \ref{d.obstacle_weaksol}.
Then, for any function $\varphi \in C^\infty(Q_{\rho,s}(z_o);\R_{\geq 0})$ vanishing on the lateral boundary of $Q_{\rho,s}(z_o)$ the following estimates hold.
\begin{itemize}
\item[(i)] For all $k \geq \sup_{Q_{\rho,s}(z_o)} \psi$, we have
\begin{align*}
	\max \bigg\{
	&\esssup_{t_o -s < t < t_o} \int_{B_\rho(x_o) \times \{t\}} \varphi^2 \mathfrak{g}_{+}(u,k) \, \d x,
	\iint_{Q_{\rho,s}(z_o)} \varphi^2 |\nabla (u-k)_{+}|^2 \, \d x \d t
	\bigg\} \\
	&\leq
	c \iint_{Q_{\rho,s}(z_o)} [(u-k)_{+}^2|\nabla \varphi|^2 + \mathfrak{g}_{+} (u,k) |\partial_t \varphi^2|] \, \d x \d t \\
	&\phantom{=} + \int_{B_\rho(x_o) \times \{t_o -s\}} \varphi^2 \mathfrak{g}_{+}(u,k)\, \d x.
\end{align*}
\item[(ii)] For arbitrary $k \in \mathbb{R}$, we have
\begin{align*}
	\max \bigg\{
	&\esssup_{t_o -s < t < t_o} \int_{B_\rho (x_o) \times \{t\}} \varphi^2 \mathfrak{g}_{-}(u,k)\, \d x,
	\iint_{Q_{\rho,s}(z_o)} \varphi^2 |\nabla (u-k)_{-}|^2 \, \d x \d t
	\bigg\} \\
	&\leq
	c \iint_{Q_{\rho,s}(z_o)} [(u-k)_{-}^2|\nabla \varphi|^2 + \mathfrak{g}_{-} (u,k) |\partial_t \varphi^2|] \, \d x \d t \\
	&\phantom{=} + \int_{B_\rho(x_o) \times \{t_o -s\}} \varphi^2 \mathfrak{g}_{-}(u,k) \, \d x.
\end{align*}
\end{itemize}
\end{lemma}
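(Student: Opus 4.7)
The strategy is to test the variational inequality~\eqref{eq:weaksol} with truncated comparison maps built from $u$ itself. For part~(i), with $k \ge \sup_{Q_{\rho,s}(z_o)} \psi$, the natural formal choice is $v = u - (u-k)_+ \varphi^2$ (after normalizing $\varphi$ so that $\varphi^2 \le 1$): on $\{u \le k\}$ this gives $v = u \ge \psi$, while on $\{u > k\}$ it gives $v \ge u - (u-k) = k \ge \sup\psi \ge \psi$, so $v \ge \psi$ in either case. For part~(ii), the formal choice $v = u + (u-k)_- \varphi^2 \ge u \ge \psi$ is admissible for arbitrary $k \in \R$, and the upward direction of the perturbation is precisely what allows part~(ii) to impose no restriction on the level $k$.

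Since $u$ lacks the time regularity required for $\partial_t v \in L^{q+1}_{\loc}(\Omega_T)$, neither formal $v$ belongs directly to $K'_\psi(\Omega_T)$. The remedy is the exponential time mollifier $\mollifytime{\cdot}{h}$ of Lemma~\ref{lem:mollifier}: one replaces $u$ (or the relevant truncation) inside $v$ by $\mollifytime{u}{h}$ to obtain a legitimate comparison map $v_h$, whose time derivative lies in $L^{q+1}_{\loc}$ by property~(iv), and then passes to the limit $h \to 0$ via properties~(i)--(ii). The constraint $v_h \ge \psi$ has to be maintained under mollification; in part~(i) this requires care because $\mollifytime{\cdot}{h}$ does not preserve one-sided bounds, and the hypothesis $k \ge \sup\psi$ is exactly what allows one to repair the construction (for instance by replacing $\mollifytime{u}{h}$ where needed by $\max\{\mollifytime{u}{h}, k\}$, or by mollifying only the positive part $(u-k)_+$); in part~(ii) the upward perturbation keeps $v_h \ge u \ge \psi$ automatically, so the mollification is straightforward.

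Once a mollified variational inequality is available, the spatial term proceeds via the standard Caccioppoli expansion $\nabla[\varphi^2 (u-k)_\pm] = 2\varphi\nabla\varphi\,(u-k)_\pm + \varphi^2 \nabla(u-k)_\pm$; applying the coercivity and growth bounds of~\eqref{eq:structure} and absorbing with Young's inequality produces $\iint \varphi^2 |\nabla(u-k)_\pm|^2 \,\d x\d t$ on the left and the cutoff error $c\iint (u-k)_\pm^2 |\nabla\varphi|^2 \,\d x\d t$ on the right. For the time term, the chain-rule identity $\partial_w \mathfrak{g}_\pm(w,k) = \pm q |w|^{q-1}(w-k)_\pm$ together with the formula in Definition~\ref{d.obstacle_weaksol} converts $\llangle \partial_t(|u|^{q-1}u), \varphi^2(u-k)_\pm \rrangle$ into an integration by parts in time of $\varphi^2 \mathfrak{g}_\pm(u,k)$, yielding a top boundary contribution at a variable time $t \in (t_o - s, t_o)$, the bottom contribution at $t_o - s$, and the error $c\iint \mathfrak{g}_\pm(u,k)\,|\partial_t\varphi^2| \,\d x\d t$. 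A standard Lipschitz time-cutoff argument then extracts the $\esssup$ on the left-hand side. The main technical obstacle throughout is the mollification step, where admissibility of $v_h$ with respect to the obstacle must be secured; once this is in place the remainder is routine, with Lemma~\ref{est:calg} used wherever a comparison between $(u-k)_\pm^2$ and $\mathfrak{g}_\pm(u,k)$ is needed.
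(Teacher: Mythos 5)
Your outline matches the paper's overall strategy: test the variational inequality with a truncated, mollified comparison map; handle the diffusion term via the structure conditions and Young's inequality; convert the time term to a primitive of $\mathfrak{g}_\pm$ by the chain-rule identity; and recover the $\esssup$ with a Lipschitz time cutoff. You also correctly single out the admissibility of the mollified comparison map as the central technical difficulty. However, neither of the repairs you propose for part~(i) resolves it, and your treatment of part~(ii) contains an actual error.

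The mollifier $\mollifytime{\cdot}{h}$ preserves order, so $\mollifytime{u}{h}\geq\mollifytime{\psi}{h}$, but this only yields $\mollifytime{u}{h}\geq\psi-\|\psi-\mollifytime{\psi}{h}\|_{L^\infty}$; moreover $\mollifytime{u}{h}$ does \emph{not} dominate $u$. Hence in part~(ii) the natural mollified map $\mollifytime{u}{h}+\big(\mollifytime{u}{h}-k\big)_-=\max\{\mollifytime{u}{h},k\}$ need not lie above $u$, and your claim that the constraint holds ``automatically'' because the upward perturbation keeps $v_h\geq u$ is false once $u$ is replaced by $\mollifytime{u}{h}$. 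In part~(i), replacing $\mollifytime{u}{h}$ by $\max\{\mollifytime{u}{h},k\}$ shifts the limiting perturbation from $-(u-k)_+$ to $(u-k)_- - (u-k)_+\varphi^2$, introducing a spurious lower-truncation piece and hence $\mathfrak{g}_-$-type contributions that do not belong in the asserted estimate; and ``mollifying only $(u-k)_+$'' leaves $u$ unmollified inside $v_h$, so $\partial_t v_h$ contains $\partial_t u$, which lacks $L^{q+1}_{\loc}$ regularity, and $v_h\notin K'_\psi$. The device the paper uses in both parts is to add the constant $\|\psi-\mollifytime{\psi}{h}\|_{L^\infty(Q_{\rho,s})}$ to the naive mollified truncation: in (i), on $\{\mollifytime{u}{h}\leq k\}$ the shifted map equals $\mollifytime{u}{h}+\|\psi-\mollifytime{\psi}{h}\|_\infty\geq\mollifytime{\psi}{h}+\|\psi-\mollifytime{\psi}{h}\|_\infty\geq\psi$, while on $\{\mollifytime{u}{h}>k\}$ it equals $k+\|\psi-\mollifytime{\psi}{h}\|_\infty\geq k\geq\sup_{Q_{\rho,s}}\psi$, which is exactly where the level restriction enters; in (ii) one has $w_h\geq\mollifytime{u}{h}+\|\psi-\mollifytime{\psi}{h}\|_\infty\geq\psi$ everywhere with no restriction on $k$. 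Since the added constant vanishes uniformly as $h\to 0$ by Lemma~\ref{lem:mollifier}(iii), the intended limiting comparison map is recovered, and from there the rest of your argument goes through as written.
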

\begin{proof}
In the following, we omit $z_o$ for simplification.
We first prove (i).
Let $\varphi \in C^\infty(Q_{\rho,s};\R_{\geq 0})$ vanish on the lateral boundary
$\partial B_\rho \times (-s,0)$ of $Q_{\rho,s}$.
Further, we define $\xi_\varepsilon \in W_0^{1,\infty}([-s,0];[0,1])$ by
\begin{align*}
    \xi_\varepsilon(t) := \begin{cases}
    0, &\text{ for } -s \leq t \leq t_1 - \varepsilon,\\
    1 + \frac{t-t_1}{\varepsilon}, &\text{ for } t_1-\varepsilon < t \leq t_1,\\
    1, &\text{ for } t_1 < t < t_2,\\
    1 - \frac{t-t_2}{\varepsilon}, &\text{ for } t_2 \leq t < t_2 + \varepsilon,\\
    0, &\text{ for } t_2+\varepsilon \leq t \leq 0.
    \end{cases}
\end{align*}
and use $\eta := \varphi^2 (\xi_\varepsilon)_\delta$, in which $(\xi_\varepsilon)_\delta$ is a standard mollification of $\xi_\eps$ with $0<\delta< \tfrac{\eps}{2}$, as a test function in \eqref{eq:weaksol}. Moreover, we define
$$
	w_h :=
	\mollifytime{u}{h} - \big( \mollifytime{u}{h} - k \big)_+
	+ \big\| \psi - \mollifytime{\psi}{h} \big\|_{L^\infty(Q_{\rho,s})}.
$$
Note that there hold
$w_h \in C^0((0,T);L^{q+1}_{\loc}(\Omega)) \cap L^2_{\loc}(0,T;H^1_{\loc}(\Omega))$
and
$$
	\partial_t w_h
	=
	\left\{
	\begin{array}{ll}
		\tfrac1h \big( u - \mollifytime{u}{h} \big)
		&\text{in }  \big\{ \mollifytime{u}{h} \leq k \big\}, \\[5pt]
		0 &\text{otherwise }
	\end{array}
	\right\}
	\in L^{q+1}_{\loc}(\Omega_T)
$$
by the chain rule and Lemma \ref{lem:mollifier}.
Further, we have that $w_h \geq \psi$ in $Q_{\rho, s}$,
since $\mollifytime{u}{h} \geq \mollifytime{\psi}{h}$ in $\Omega_T$
and $k \geq \sup_{Q_{\rho,s}} \psi$.
Since it is sufficient that $w_h$ satisfies the obstacle condition
in $\spt(\eta) \subset Q_{\rho,s}$, $w_h$ is an admissible comparison map
in \eqref{eq:weaksol}.
Therefore, we obtain that
\begin{equation}
	\llangle \partial_t \u^q, \eta (w_h - u)\rrangle
	+ \iint_{\Omega_T} \mathbf{A}(x,t,u,\nabla u) \cdot \nabla (\eta (w_h-u)) \, \d x \d t
	\geq 0.
	\label{eq:weaksol_energy_est}
\end{equation}
In the following, we treat the two terms separately.
First, by the formula for $\partial_t w_h$ above and since
$\big( \u^q - \boldsymbol{\mollifytime{u}{h}^q} \big) \big( u - \mollifytime{u}{h} \big) \geq 0$,
we conclude that
\begin{align*}
    \iint_{Q_{\rho,s}} \eta \u^q \partial_t w_h \, \d x \d t
    \geq
    \iint_{Q_{\rho,s}} \eta \big| \mollifytime{u}{h} \big|^{q-1} \mollifytime{u}{h} 
    \partial_t \big( \mollifytime{u}{h} - \big( \mollifytime{u}{h} - k \big)_+ \big)
    \, \d x \d t.
\end{align*}
Integrating by parts leads to
\begin{align*}
	\iint_{Q_{\rho,s}} &\eta \u^q \partial_t w_h \, \d x \d t \\
	&\geq
	\iint_{Q_{\rho,s}} -\tfrac{1}{q+1} \partial_t \eta \big| \mollifytime{u}{h} \big|^{q+1}
	+ \partial_t \eta \big| \mollifytime{u}{h} \big|^{q-1} \mollifytime{u}{h} \big( \mollifytime{u}{h} -k \big)_+
	\, \d x \d t \\
	&\phantom{=}+
	\iint_{Q_{\rho,s}}
	\eta \partial_t \big( \big| \mollifytime{u}{h} \big|^{q-1} \mollifytime{u}{h} \big)
	\big( \mollifytime{u}{h} -k \big)_+ \, \d x \d t.
\end{align*}
Writing the last term on the right-hand side of the preceding inequality as
$$
	\partial_t \big( |\mollifytime{u}{h}|^{q-1} \mollifytime{u}{h} \big)
	\big( \mollifytime{u}{h}-k \big)_+
	=
	\partial_t \left( q \int_k^{\mollifytime{u}{h}}|s|^{q-1}(s-k)_+\, \d s \right)
	=
	\partial_t \mathfrak{g}_+ \big( \mollifytime{u}{h},k \big)
$$
yields
\begin{align*}
	\iint_{Q_{\rho,s}} &\eta \u^q \partial_t w_h \, \d x \d t \\
	&\geq
	-\iint_{Q_{\rho,s}}\partial_t \eta
	\Big[ \tfrac{1}{q+1} \big| \mollifytime{u}{h} \big|^{q+1}
	- \big| \mollifytime{u}{h} \big|^{q-1} \mollifytime{u}{h}
	\big(\mollifytime{u}{h} - k \big)_+ \Big]
	\, \d x \d t \\
	&\phantom{=}
	-\iint_{Q_{\rho,s}} \partial_t \eta \mathfrak{g}_+\big( \mollifytime{u}{h},k \big)
	\, \d x \d t.
\end{align*}
Inserting this into the first term of \eqref{eq:weaksol_energy_est}
and passing to the limit $h \downarrow 0$, we see that
\begin{equation}
	\limsup_{h \downarrow 0} \,\llangle \partial_t \u^q, \eta (w_h - u) \rrangle
	\leq
	\iint_{Q_{\rho,s}} \partial_t \eta \mathfrak{g}_+(u,k) \, \d x \d t.
	\label{eq:time_part}
\end{equation}
In order to treat the second term in \eqref{eq:weaksol_energy_est},
observe that
$$
	\nabla (\eta (w_h - u)) \longrightarrow -\nabla (\eta (u-k)_+)
	\text{ in $L^2(Q_{\rho,s})$ as $h \downarrow 0$}.
$$
Since $\mathbf{A}(x,t,u,\nabla u) \in L^2(Q_{\rho,s},\R^n)$
by growth condition \eqref{eq:structure}$_2$, this implies
\begin{align}
    \lim_{h \downarrow 0}
    \iint_{Q_{\rho,s}} &\mathbf{A}(x,t,u,\nabla u) \cdot \nabla(\eta (w_h -u)) \, \d x \d t \nonumber\\
    &=
    -\iint_{Q_{\rho,s}} 2 (\xi_\varepsilon)_\delta \varphi (u-k)_+ \mathbf{A}(x,t,u,\nabla u) \cdot \nabla \varphi \, \d x \d t
    \label{eq:diffusion_term} \\
    &\phantom{=}
    - \iint_{Q_{\rho,s}} \eta \mathbf{A}(x,t,u,\nabla u) \cdot \nabla (u-k)_+ \, \d x \d t.
    \nonumber
\end{align}
By means of \eqref{eq:structure}$_1$ and since $\eta \geq 0$, we observe that
$$
	\eta \mathbf{A}(x,t,u,\nabla u) \cdot \nabla (u-k)_+
	=
	\eta \mathbf{A}(x,t,u,\nabla (u-k)_+) \cdot \nabla (u-k)_+
	\geq
	C_o \eta |\nabla (u-k)_+|^2.
$$
Further, by \eqref{eq:structure}$_2$ and
Young's inequality with parameter $\frac{C_o}{4 C_1}$, we find that
\begin{align*}
	\big| 2(\xi_\varepsilon)_\delta \varphi &(u-k)_+ \mathbf{A}(x,t,u,\nabla u) \cdot \nabla \varphi \big| \\
	&\leq
	2 C_1 (\xi_\varepsilon)_\delta |\varphi| |\nabla (u-k)_+| (u-k)_+ |\nabla \varphi| \\
	&\leq
	\tfrac{C_o}{2} (\xi_\varepsilon)_\delta \varphi^2 |\nabla (u-k)_+|^2
	+ \tfrac{2 C_1^2}{C_o} (\xi_\varepsilon)_\delta |\nabla \varphi|^2 (u-k)_+^2.
\end{align*}
Inserting the preceding two inequalities into \eqref{eq:diffusion_term},
we obtain that
\begin{align*}
	\lim_{h \downarrow 0}
	\iint_{Q_{\rho,s}} &\mathbf{A}(x,t,u,\nabla u) \cdot \nabla(\eta (w_h -u)) \, \d x \d t \\
	&\leq
	-\frac{C_o}{2}\iint_{Q_{\rho,s}} \eta |\nabla (u-k)_+|^2 \, \d x \d t
	+ \frac{2C_1^2}{C_o}
	\iint_{Q_{\rho,s}} (\xi_\varepsilon)_\delta |\nabla \varphi|^2 (u-k)_+^2 \, \d x \d t.
\end{align*}
Together with \eqref{eq:time_part}, we conclude that
\begin{align*}
    \frac{C_o}{2}\iint_{Q_{\rho,s}} &\eta |\nabla (u-k)_+|^2 \, \d x \d t \\
    &\leq
    \iint_{Q_{\rho,s}} \partial_t \eta \mathfrak{g}_+(u,k) \, \d x \d t
    + \frac{2C_1^2}{C_o} \iint_{Q_{\rho,s}} (\xi_\varepsilon)_\delta |\nabla \varphi|^2(u-k)_+^2 \, \d x \d t.
\end{align*}
By first passing to the limit $\delta \to 0$, and subsequently $\varepsilon \downarrow 0$, we get
\begin{align*}
    \int_{B_\rho \times\{t_2\}} &\varphi^2 \mathfrak{g}_+(u,k) \, \d x
    + \int_{t_1}^{t_2}\int_{B_{\rho} } \varphi^2 |\nabla (u-k)_+|^2 \, \d x \d t \\
    &\leq
    c\iint_{Q_{\rho,s}} |\nabla \varphi|^2 (u-k)_+^2
    + |\partial_t \varphi^2| \mathfrak{g}_+(u,k) \, \d x \d t
    +\int_{B_\rho \times \{t_1\}} \varphi^2 \mathfrak{g}_+(u,k) \, \d x.
\end{align*}
Since all terms are non-negative, we infer the desired energy estimate
by first discarding the first term on the left-hand side and
passing to the limits $t_1 \downarrow -s$, $t_2 \uparrow 0$
and then discarding the second term on the left-hand side,
passing to the limit $t_1 \downarrow -s$
and taking the supremum over all $t_2 \in (-s,0)$.\\
In order to prove (ii), we use the comparison function
$w_h := \mollifytime{u}{h} + (\mollifytime{u}{h} - k)_- + \| \psi - \mollifytime{\psi}{h}\|_{L^\infty(Q_{\rho,s})}$ with an arbitrary level $k \in \mathbb{R}$ and proceed similarly as in (i).
\end{proof}

\section{Logarithmic estimates} \label{s.logest}
In this section we will state a logarithmic estimate as in~\cite{BLS_holder} and~\cite{Cho_Scheven}. Let $0 < \gamma < \Gamma$ and define
$$
\phi(a):= \phi_{\Gamma,\gamma}(a) := \left[ \log \left( \frac{\Gamma}{\Gamma + \gamma - a} \right) \right]_+ \ \quad \text{ for } a< \Gamma + \gamma.
$$
Observe that for $a \leq \Gamma$, we have
$$
0 \leq \phi(a) \leq \log \left( \tfrac{\Gamma}{\gamma} \right) \quad \text{ and } \quad 0 \leq \phi'(a) \leq \tfrac{1}{\gamma}\quad \text{ when } a \neq \gamma,
$$	
with $\phi(a) = 0$ for $a \leq \gamma$. Further, we have $\phi'' (a) = \left( \phi' \right)^2(a)$ for $a \neq \gamma$. Note that $\phi^2$ is differentiable in $[0,\Gamma]$ such that the Lipschitz continuous derivative $\left( \phi^2 \right)'$ satisfies
$$
\left( \phi^2 \right)' = 2 \phi \phi'\quad \text{ and } \quad \left( \phi^2 \right)'' = 2 (1+\phi) 	\left( \phi' \right)^2 \quad \text{ in } [0,\Gamma] \setminus \{ \gamma \}.
$$
With this information at hand, we are able to prove the following lemma.
\begin{lemma} \label{l.logarithmic_est}
Let $B_{\rho_1}(x_o) \Subset B_{\rho_2}(x_o) \Subset \Omega$, $0 < t_1 < t_2 < T$
and $Q_2 := B_{\rho_2}(x_o) \times (t_1,t_2)$.
Further, define $\Gamma := \esssup_{Q_2} (u-k)_+ $ and consider some parameter $\gamma \in (0,\Gamma)$.
Assume that $\psi \in C^0(\Omega_T)$ and let $u \in  K_\psi(\Omega_T)$
be a local weak solution to the obstacle problem \eqref{eq:pde}
with structure conditions \eqref{eq:structure}
according to Definition~\ref{d.obstacle_weaksol}.
Then the following estimates hold.
\begin{enumerate}
\item[(i)] For any $k \geq \sup_{Q_2} \psi$, we have
\begin{align*}
	\sup_{t \in (t_1,t_2)}
	\int_{B_{\rho_1} (x_o)} \int_k^u &|s|^{q-1} \left( \phi^2 \right)'((s - k)_+) \, \d s \d x \\
	&\leq
	\int_{B_{\rho_2}(x_o) \times \{t_1\}} \int_k^u |s|^{q-1} \left( \phi^2 \right)'((s-k)_+) \, \d s \d x \\ 
	&\phantom{=}
	+ \frac{c}{(\rho_2 - \rho_1)^2} \iint_{Q_2} \phi\left( (u-k)_+ \right) \, \d x \d t.
\end{align*}
\item[(ii)] For any $k \in \R$, we have that
\begin{align*}
	\sup_{t \in (t_1,t_2)}
	\int_{B_{\rho_1} (x_o)} \int_u^k &|s|^{q-1} \left( \phi^2 \right)'((s - k)_-) \, \d s \d x \\
	&\leq
	\int_{B_{\rho_2}(x_o) \times \{t_1\}} \int_u^k |s|^{q-1} \left( \phi^2 \right)'((s-k)_-) \, \d s \d x \\ 
	&\phantom{=}
	+ \frac{c}{(\rho_2 - \rho_1)^2} \iint_{Q_2} \phi\left( (u-k)_- \right) \, \d x \d t.
\end{align*}
\end{enumerate}
\end{lemma}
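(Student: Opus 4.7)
The approach mirrors the Caccioppoli estimate of Lemma \ref{l.caccioppoli}; the plan is to sketch part (i) in detail, and part (ii) will go through analogously. I would begin by fixing a radial cutoff $\zeta \in C_0^\infty(B_{\rho_2}(x_o);[0,1])$ with $\zeta \equiv 1$ on $B_{\rho_1}(x_o)$ and $|\nabla\zeta|\leq 2/(\rho_2-\rho_1)$, together with the time cutoff $\xi_\varepsilon$ and its standard mollification $(\xi_\varepsilon)_\delta$ from the proof of Lemma \ref{l.caccioppoli}. In the variational inequality \eqref{eq:weaksol} I would test with $\varphi := \zeta^2(\xi_\varepsilon)_\delta$ and the mollified comparison map
\[
    w_h := \mollifytime{u}{h} - \alpha \bigl(\phi^2\bigr)'\bigl((\mollifytime{u}{h}-k)_+\bigr) + \bigl\|\psi - \mollifytime{\psi}{h}\bigr\|_{L^\infty(Q_2)}, \qquad \alpha := \frac{\gamma^2}{2\log(\Gamma/\gamma)}.
\]
Because $(\phi^2)'$ vanishes on $[0,\gamma]$ and is bounded by $2\gamma^{-1}\log(\Gamma/\gamma)$ on $[\gamma,\Gamma]$, this choice makes the logarithmic correction at most $\gamma$ pointwise; distinguishing the three cases $\mollifytime{u}{h}\leq k$, $k<\mollifytime{u}{h}\leq k+\gamma$ and $\mollifytime{u}{h}>k+\gamma$ and invoking $k\geq\sup_{Q_2}\psi$ in the last one yields $w_h\geq\psi$ on $Q_2$, after which Lemma \ref{lem:mollifier} gives $w_h \in K'_\psi(\Omega_T)$.

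Next I would handle the time term exactly as in Lemma \ref{l.caccioppoli}: the monotonicity $(\u^q-\boldsymbol{\mollifytime{u}{h}^q})(u-\mollifytime{u}{h})\geq 0$ permits the replacement of $\u^q$ by $\boldsymbol{\mollifytime{u}{h}^q}$ against $\partial_t w_h$, after which the chain-rule identity
\[
    (\phi^2)'\bigl((\mollifytime{u}{h}-k)_+\bigr)\,\partial_t\boldsymbol{\mollifytime{u}{h}^q}
    = \partial_t\!\int_k^{\mollifytime{u}{h}} q|s|^{q-1}(\phi^2)'\bigl((s-k)_+\bigr)\,\d s
\]
turns the $\alpha$-scaled part of the time term into a pure $t$-derivative. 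Integrating by parts against $\varphi$, letting $h,\delta,\varepsilon\downarrow 0$ and separating the two endpoints of $\xi_\varepsilon$ reproduces, up to the factor $\alpha$, the sup-in-$t_2$ boundary term on the LHS and the $t=t_1$ boundary term on the RHS of the stated inequality.

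For the diffusion contribution, expanding $\nabla(\varphi(w_h-u))$ and sending $h\downarrow 0$ produces an interior term $-\alpha\zeta^2(\xi_\varepsilon)_\delta(\phi^2)''((u-k)_+)\mathbf{A}\cdot\nabla (u-k)_+$, which is non-positive by \eqref{eq:structure}$_1$ and can be discarded after absorption, and a cutoff term $-2\alpha\zeta(\xi_\varepsilon)_\delta(\phi^2)'((u-k)_+)\mathbf{A}\cdot\nabla\zeta$. Using \eqref{eq:structure}$_2$ and Young's inequality in the form $\phi\phi'|\nabla (u-k)_+|\,\zeta|\nabla\zeta| \leq \eta\, \phi(\phi')^2|\nabla (u-k)_+|^2\zeta^2 + \tfrac{1}{4\eta}\phi|\nabla\zeta|^2$ converts the cross-term into an absorbable piece $\lesssim \eta(\phi^2)''|\nabla (u-k)_+|^2\zeta^2$ (since $(\phi^2)''=2(1+\phi)(\phi')^2$) plus the target piece $c(\rho_2-\rho_1)^{-2}\phi((u-k)_+)$. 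Combining, dividing by $\alpha>0$ and taking the supremum over the terminal time of $\xi_\varepsilon$ gives (i). For (ii) one replaces $w_h$ by $\mollifytime{u}{h}+\alpha(\phi^2)'((\mollifytime{u}{h}-k)_-)+\|\psi-\mollifytime{\psi}{h}\|_{L^\infty(Q_2)}$; adding a non-negative quantity makes $w_h\geq\psi$ automatic for \emph{every} $k\in\R$, which is why no size condition on $k$ appears.

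The main obstacle is precisely the admissibility step in part (i): the artificial scaling $\alpha$ must be introduced so that it enters the variational inequality linearly (hence cancels at the end) and, simultaneously, is small enough that $w_h\geq\psi$ uniformly in $h$. Balancing these two requirements forces the specific choice $\alpha=\gamma^2/(2\log(\Gamma/\gamma))$, tuned to the parameters $\gamma,\Gamma$ defining $\phi$. Once this compatibility is arranged, the chain-rule identity for the time term and the algebraic identity $(\phi^2)''=2(1+\phi)(\phi')^2$ in the diffusion term make the rest routine.
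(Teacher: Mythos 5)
Your plan follows the paper's own proof very closely: same comparison map $w_h=\mollifytime{u}{h}-\alpha(\phi^2)'((\mollifytime{u}{h}-k)_+)+\|\psi-\mollifytime{\psi}{h}\|_{L^\infty(Q_2)}$, same monotonicity trick for the time term, same chain-rule identity, same Young/absorption step for the diffusion term (where the paper uses the cleaner identity $2\phi(\phi')^2-(\phi^2)''=-2(\phi')^2\le0$ in place of your explicit absorption, a cosmetic difference), and the same cancellation of the scaling parameter at the very end. Your buffer argument for admissibility, which exploits that $(\phi^2)'$ vanishes on $[0,\gamma]$ so that the correction only acts where $\mollifytime{u}{h}>k+\gamma$, is a genuine small improvement: it handles $k\ge\sup_{Q_2}\psi$ directly, whereas the paper first reduces to $k>\sup_{Q_2}\psi$ by a continuity argument and then requires $\lambda\le(k-\sup_{Q_2}\psi)/\sup_{[0,\Gamma]}(\phi^2)'$.

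There is, however, a gap. To convert $\iint\varphi\,\u^q\partial_tw_h$ into a pure $t$-derivative you replace $\u^q$ by $\boldsymbol{\mollifytime{u}{h}^q}$ using
\[
\big(\u^q-\boldsymbol{\mollifytime{u}{h}^q}\big)\partial_tw_h
=\tfrac1h\big(\u^q-\boldsymbol{\mollifytime{u}{h}^q}\big)\big(u-\mollifytime{u}{h}\big)\Big(1-\alpha\big(\phi^2\big)''\big((\mollifytime{u}{h}-k)_+\big)\Big)\ge0,
\]
and this requires the factor $1-\alpha(\phi^2)''\ge0$ on $[0,\Gamma]$; the monotonicity $(\u^q-\boldsymbol{\mollifytime{u}{h}^q})(u-\mollifytime{u}{h})\ge0$ alone does not give the inequality. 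You never address this constraint, and your specific $\alpha=\gamma^2/(2\log(\Gamma/\gamma))$ violates it: since $\sup_{[0,\Gamma]}(\phi^2)''=2(1+\log(\Gamma/\gamma))/\gamma^2$, one finds $\alpha\sup(\phi^2)''=1+1/\log(\Gamma/\gamma)>1$. The paper sidesteps this by explicitly saying ``decreasing $\lambda$ if necessary''; you should do the same, e.g.\ take $\alpha\le\gamma^2/(2(1+\log(\Gamma/\gamma)))$, which still gives correction $\alpha\sup(\phi^2)'<\gamma$ so that your admissibility argument survives. There is also no need for $\alpha$ to be ``forced'' to a specific value: any sufficiently small $\alpha>0$ works because both the time and diffusion contributions scale linearly in $\alpha$ and divide out at the end.
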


\begin{proof}
In the following, we omit $(x_o,t_o)$ for simplicity.
We start with the proof of (i). Since all terms in the asserted estimate depend continuously on $k$,
we may assume that $k > \sup_{Q_2} \psi$.
We would like to use
$$
	w_h :=
	\mollifytime{u}{h}
	- \lambda \left( \phi^2 \right)' \big( \big(\mollifytime{u}{h} - k \big)_+ \big)
	+ \big\| \psi - \mollifytime{\psi}{h} \big\|_{L^\infty(Q_2)}
$$
with
$$
	0 < \lambda \leq
	\tfrac{ ( k - \sup_{Q_2}\psi ) }{\sup_{[0,\Gamma]} \left( \phi^2 \right)' }
$$
as comparison map in \eqref{eq:weaksol}.
By Lemma \ref{lem:mollifier} and since $\big( \phi^2 \big)'$ is
a Lipschitz continuous function, we have that $w_h \in C^0((0,T); L^{q+1}_{\loc}(\Omega))
\cap L^2_{\loc}(0,T; H^1_{\loc}(\Omega))$ with $\partial_t w_h \in L^{q+1}_{\loc}(\Omega_T)$.
Moreover, if $\mollifytime{u}{h} \leq k$, we find that
$$
	w_h =
	\mollifytime{u}{h} +\big \| \psi - \mollifytime{\psi}{h} \big\|_{L^\infty(Q_2)}
	\geq
	\mollifytime{\psi}{h} +\big \| \psi - \mollifytime{\psi}{h} \big\|_{L^\infty(Q_2)}
	\geq \psi
$$
in $Q_2$, and for $\mollifytime{u}{h} > k$ we have that
$$
	w_h >
	k - \lambda \left( \phi^2 \right)' \big( \big( \mollifytime{u}{h} - k \big)_+ \big) 
	\geq
	\sup_{Q_2} \psi
$$
by the restriction on $\lambda$.
Consequently, $w_h$ is an admissible comparison map in \eqref{eq:weaksol}.
Thus, for any $\varphi \in C^\infty_0(Q_2; \R_{\geq 0})$ we obtain that 
$$
	\mathrm{I}_h + \mathrm{II}_h
	:=
	\llangle \partial_t \u^q, \varphi (w_h - u) \rrangle
	+ \iint_{\Omega_T} \mathbf{A}(x,t,u, \nabla u) \cdot \nabla \left(\varphi (w_h-u) \right) \, \d x \d t 
	\geq 0.
$$
In the following, we estimate these terms separately.
First, we calculate
\begin{align}
	\partial_t w_h
	&=
	\partial_t \mollifytime{u}{h}
	\left( 1- \lambda \big(\phi^2 \big)''\big( \big( \mollifytime{u}{h}-k \big)_+ \big) \right) \label{eq:aux_time_logest} \\
	&=
	\tfrac{1}{h} \big( u - \mollifytime{u}{h} \big)
	\left(1- \lambda \big(\phi^2 \big)'' \big( \big( \mollifytime{u}{h}-k \big)_+ \big) \right).
	\nonumber
\end{align}
Since the derivative $\partial_t \mollifytime{u}{h}$ vanishes a.e.~in the set
$\big\{ \big( \mollifytime{u}{h}-k \big)_+ = \gamma \big\}$,
the terms involving $\big( \phi^2 \big)''$ are well-defined a.e.~in $Q_2$.
Further, decreasing $\lambda$ if necessary, we may assume that the last factor
is positive, which allows us to estimate
$$
	\big( \u^q - \boldsymbol{\mollifytime{u}{h}^q} \big) \partial_t w_h
	=
	\tfrac{1}{h}
	\big( \u^q - \boldsymbol{\mollifytime{u}{h}^q} \big) \big( u - \mollifytime{u}{h} \big)
	\left(1- \lambda \big(\phi^2 \big)'' \big( \big( \mollifytime{u}{h}-k \big)_+ \big) \right)
	\geq 0.
$$
Together with integration by parts and the fact that
$$
	\partial_t \boldsymbol{\mollifytime{u}{h}^q} \lambda
	\big( \phi^2 \big)' \big( \big( \mollifytime{u}{h}-k \big)_+ \big)
	=
	\partial_t \left(\lambda q
	\int_k^{\mollifytime{u}{h}} |s|^{q-1} \big( \phi^2 \big)'((s-k)_+) \, \d s \right),
$$
this implies that
\begin{align*}
	\iint_{\Omega_T} &\varphi \u^q \partial_t w_h \, \d x \d t \\
	&\geq
	\iint_{\Omega_T} \varphi \boldsymbol{\mollifytime{u}{h}^q} \partial_t w_h \, \d x \d t \\
	&=
	\iint_{\Omega_T}
	\varphi \partial_t \Big( \tfrac{1}{q+1} \left|\mollifytime{u}{h} \right|^{q+1} \Big)
	- \lambda \varphi \boldsymbol{\mollifytime{u}{h}^q}
	\partial_t \left( \big( \phi^2 \big)' \big( \big( \mollifytime{u}{h}-k \big)_+ \big) \right)
	\,\dx\dt
	\\
	&=
	\iint_{\Omega_T} \partial_t \varphi
	\left( - \tfrac{1}{q+1}  \big| \mollifytime{u}{h} \big|^{q+1}
	+ \boldsymbol{\mollifytime{u}{h}^q} \lambda
	\left( \phi^2 \right)' \big( \big( \mollifytime{u}{h}-k \big)_+ \big) \right) \, \d x \d t \\
	&\phantom{=}
	- q \lambda \iint_{\Omega_T} \partial_t \varphi
	\int_k^{\mollifytime{u}{h}} |s|^{q-1} \left( \phi^2 \right)'\left( (s-k)_+ \right) \,\ds \, \d x \d t.
\end{align*}
Recalling the definition of $\mathrm{I}_h$ and inserting the preceding inequality yields
\begin{equation}
	\limsup_{h \to 0} \mathrm{I}_h
	\leq
	q \lambda
	\iint_{\Omega_T} \partial_t \varphi \int_k^u |s|^{q-1} \big( \phi^2 \big)'((s-k)_+)\, \d s  \, \d x \d t.
	\label{eq:time_term_logest}
\end{equation}
Next, we turn our attention to $\mathrm{II}_h$.
By Lemma \ref{lem:mollifier} and since $(\phi^2)'$ is Lipschitz continuous, we find that
$$
	\varphi (w_h - u) \rightharpoonup - \lambda \left( \phi^2 \right)'( (u-k)_+ ) \varphi
	\quad \text{weakly in } L^2(t_1,t_2;W^{1,2}(B_{\rho_1})).
$$
Together with the structure conditions \eqref{eq:structure} this implies that
\begin{align*}
	\lim_{h \to 0} \mathrm{II}_h
	&=
	- \lambda \iint_{\Omega_T} \mathbf{A}(x,t,u,\nabla u) \cdot \nabla \left(\varphi \left( \phi^2 \right)'\left( (u-k)_+ \right) \right) \, \d x \d t \\
	&=
	-\lambda \iint_{\Omega_T} \varphi \big( \phi^2 \big)''( (u-k)_+ )
	\mathbf{A}(x,t,u,\nabla u) \cdot \nabla (u-k)_+ \,\dx\dt \\
	&\phantom{=}
	- \lambda \iint_{\Omega_T} \big( \phi^2 \big)' ((u-k)_+)
	\mathbf{A}(x,t,u,\nabla u) \cdot \nabla \varphi \, \d x \d t \\
	&\leq
	-\lambda C_o \iint_{\Omega_T} \varphi \big( \phi^2 \big)''( (u-k)_+ )
	|\nabla (u-k)_+ |^2 \,\dx\dt \\
	&\phantom{=}
	+ \lambda C_1 \iint_{\Omega_T} \big( \phi^2 \big)' ((u-k)_+)
	|\nabla (u-k)_+ | |\nabla \varphi| \,\dx\dt. 
\end{align*}
Here, the term involving $\big( \phi^2 \big)''$ is well defined a.e.~in $\Omega_T$,
since $\nabla (u-k)_+ = 0$ a.e.~in $\{ (u-k)_+ = \gamma \}$.
At this point we choose $\varphi(x,t) = \xi_\varepsilon(t) \eta(x)^2$,
where $\xi_\varepsilon$ is defined as in the proof of Lemma \ref{l.caccioppoli}
and $\eta \in C_0^1(B_{\rho_2},\R_{\geq 0})$ is a cut-off function
with $\eta = 1$ in $B_{\rho_1}$ and $|\nabla \eta| \leq \frac{2}{\rho_2-\rho_1}$.
Applying Young's inequality with parameter $\frac{C_o}{2C_1}$ yields 
\begin{align*}
	\lim_{h\to 0} \mathrm{II}_h
	&\leq
	\lambda C_o \iint_{\Omega_T} \xi_\varepsilon \eta^2
	\Big( 2\phi \big(\phi'\big)^2 - \big( \phi^2 \big)'' \Big)((u-k)_+)
	|\nabla (u-k)_+|^2 \,\dx\dt\\
	&\phantom{=}
	+ \tfrac{2\lambda C_1}{C_o}
	\iint_{\Omega_T} \xi_\varepsilon |\nabla \eta|^2 \phi((u-k)_+) \, \d x \d t \\
	&\leq
	\tfrac{2\lambda C_1}{C_o}
	\iint_{\Omega_T} \xi_\varepsilon |\nabla \eta|^2 \phi((u-k)_+) \, \d x \d t.
\end{align*}
In the last line, we used that
$2\phi \left(\phi'\right)^2 - \left( \phi^2 \right)'' = - 2 \left( \phi' \right)^2 \leq 0$.
Together with \eqref{eq:aux_time_logest}, we obtain that
\begin{align*}
	-q \iint_{\Omega_T} \xi_\varepsilon' \eta^2
	&\int_k^u |s|^{q-1} \big( \phi^2 \big)'((u-k)_+) \, \d s \, \d x \d t \\
	&\leq
	 \tfrac{2 C_1}{C_o} \iint_{\Omega_T} \xi_\varepsilon |\nabla \eta|^2 \phi((u-k)_+) \, \d x \d t.
\end{align*}
Passing to the limit $\varepsilon \downarrow 0$, we conclude that 
\begin{align*}
	q \int_{B_{\rho_1}\times \{ t \}} &\int_k^u |s|^{q-1} \left( \phi^2 \right)'((u-k)_+) \, \d s \, \d x \\
	&\leq
	q \int_{B_{\rho_2} \times \{t_1\}} \int_k^u |s|^{q-1} \left( \phi^2 \right)'((u-k)_+) \, \d s \, \d x \\
	&\phantom{=}
	+ \tfrac{8C_1}{C_o(\rho_2 - \rho_1)^2} \iint_{Q_2} \phi((u-k)_+) \, \d x \d t,
\end{align*}
for any $t \in (t_1,t_2)$, which proves (i).

For the case (ii) we start with the comparison function
\begin{align*}
w_h &:= \mollifytime{u}{h} + \left( \phi^2 \right)' \left( \left(\mollifytime{u}{h} - k\right)_- \right)  + \| \psi - \mollifytime{\psi}{h} \|_{L^\infty(Q_2)} \\
&\geq \psi + \left( \phi^2 \right)' \left( (u-k)_- \right) \\
&\geq \psi
\end{align*}
since $\phi, \phi' \geq 0$ and proceed similarly as in the case (i).
\end{proof}

\section{First alternative} \label{s.1alternative}
In the following, we use parameters $\boldsymbol{\mu}^-, \boldsymbol{\mu}^+ \in \R$ and $\boldsymbol{\omega} >0$ satisfying
\begin{equation}
	\boldsymbol{\mu}^- \leq \essinf_{Q_{\rho, \theta \rho^2}(z_o)} u,
	\quad
	\boldsymbol{\mu}^+ \geq \esssup_{Q_{\rho, \theta \rho^2}(z_o)} u
	\quad \text{ and }\quad
	\boldsymbol{\omega} = \boldsymbol{\mu}^+ - \boldsymbol{\mu}^-,
	\label{eq:mu_basic}
\end{equation}
where slightly different factors $\theta \approx |u|^{q-1}$ will be considered.
For some constant $\nu \in (0,1)$, distinguish between the measure theoretic alternatives
\begin{equation}
	\left\{
	\begin{array}{l}
		\big| \big\{ \pm \big( \boldsymbol{\mu}^\pm - u \big) \leq \tfrac12 \boldsymbol{\omega} \big\} \cap Q_{\rho, \theta \rho^2}(z_o) \big|
	\leq \nu | Q_{\rho, \theta \rho^2}(z_o)|, \\[5pt]
	\big| \big\{ \pm \big( \boldsymbol{\mu}^\pm - u \big) \leq \tfrac12 \boldsymbol{\omega} \big\} \cap Q_{\rho, \theta \rho^2}(z_o) \big|
	> \nu | Q_{\rho, \theta \rho^2}(z_o)|.
	\end{array}
	\right.
	\label{eq:explanation_alternatives}
\end{equation}
In the so-called first alternative~\eqref{eq:explanation_alternatives}$_1$, the solution $u$ is bounded away from its essential infimum or supremum on a large portion of the considered cylinder, whereas it is close to the essential infimum or supremum on a large part of the cylinder in the second alternative~\eqref{eq:explanation_alternatives}$_2$.
In both situations, our goal is to show that $u$ is bounded away from one of the extreme values a.e.~in a suitable sub-cylinder of $Q_{\rho, \theta \rho^2}(z_o)$ together with a quantitative bound.
The necessary tools for the first alternative~\eqref{eq:explanation_alternatives}$_1$ will be discussed in the present section, while we will be concerned with the second alternative~\eqref{eq:explanation_alternatives}$_2$ in Section~\ref{s.2alternative}.

\subsection{De Giorgi type lemmas in the singular/degenerate case}
Next we will state and prove a De Giorgi type lemma in the case where $u$ is away from its supremum in a significant portion of the considered intrinsic cylinder.

\begin{lemma} \label{l.de_giorgi_type+++}
Let $u$ be a locally bounded, local weak solution to the obstacle problem and $Q_{\rho,\theta \rho^2} (z_o) \Subset \Omega_T$, where $\theta = \boldsymbol{\omega}^{q-1}$. Furthermore, we suppose that $\tfrac12 (\boldsymbol{\mu}^+ + \boldsymbol{\mu}^-) \geq \sup_{Q_{\rho,\theta \rho^2}(z_o)} \psi$. Then, there exists a constant $\nu = \nu(n,q) \in (0,1)$, such that if
$$
\left|\{ \boldsymbol{\mu}^+ - u \leq  \tfrac12 \boldsymbol{\omega}\} \cap Q_{\rho,\theta \rho^2}(z_o)\right| \leq \nu \left|Q_{\rho,\theta \rho^2}(z_o)\right|,
$$
then either
$$
|\boldsymbol{\mu}^+| > 2 \boldsymbol{\omega},
$$
or
$$
\boldsymbol{\mu}^+ - u \geq  \tfrac{1}{4}\boldsymbol{\omega}\quad \text{ a.e. in } Q_{\frac{\rho}{2},\theta \left( \frac{\rho}{2}\right)^2}(z_o).
$$
\end{lemma}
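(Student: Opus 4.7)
The plan is a De Giorgi iteration on nested intrinsic cylinders with truncation levels approaching $\boldsymbol{\mu}^+ - \tfrac{\boldsymbol{\omega}}{4}$ from below. One may first reduce to the case $|\boldsymbol{\mu}^+| \leq 2\boldsymbol{\omega}$, since otherwise the first alternative of the dichotomy already holds. Under this reduction, $|u|$ and $|\boldsymbol{\mu}^\pm|$ are all $\lesssim \boldsymbol{\omega}$ throughout $Q_0 := Q_{\rho, \theta \rho^2}(z_o)$, so the intrinsic scaling $\theta = \boldsymbol{\omega}^{q-1}$ matches $|u|^{q-1}$ up to $q$-dependent constants. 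Introduce nested radii $\rho_j := \tfrac{\rho}{2} + \tfrac{\rho}{2^{j+1}}$, cylinders $Q_j := Q_{\rho_j, \theta \rho_j^2}(z_o)$, and levels $k_j := \boldsymbol{\mu}^+ - \tfrac{\boldsymbol{\omega}}{4} - \tfrac{\boldsymbol{\omega}}{2^{j+2}}$, which increase from $k_0 = \boldsymbol{\mu}^+ - \tfrac{\boldsymbol{\omega}}{2}$ to $\boldsymbol{\mu}^+ - \tfrac{\boldsymbol{\omega}}{4}$. Pick standard cutoffs $\varphi_j$ that vanish on the parabolic boundary of $Q_j$, equal $1$ on $Q_{j+1}$, and satisfy $|\nabla \varphi_j|^2 \lesssim 4^j/\rho^2$ and $|\partial_t \varphi_j^2| \lesssim 4^j/(\theta \rho^2)$.

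Since $k_j \geq \boldsymbol{\mu}^+ - \tfrac{\boldsymbol{\omega}}{2} = \tfrac12(\boldsymbol{\mu}^+ + \boldsymbol{\mu}^-) \geq \sup_{Q_0}\psi$ by hypothesis, Lemma~\ref{l.caccioppoli}(i) is admissible at every level $k_j$ with cutoff $\varphi_j$. Using Lemma~\ref{est:calg} to compare $\mathfrak{g}_+(u,k_j)$ with $\boldsymbol{\omega}^{q-1}(u-k_j)_+^2$, combined with $(u-k_j)_+ \leq \boldsymbol{\omega}/2$ on $\{u > k_j\}$ and the crucial cancellation $\boldsymbol{\omega}^{q-1}/\theta = 1$ arising in the $\partial_t \varphi_j^2$ contribution, the right-hand side of the energy estimate is controlled by $c \, 4^j \rho^{-2} \boldsymbol{\omega}^2 |A_j|$, where $A_j := \{u > k_j\} \cap Q_j$.

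Feeding the resulting bounds on $\iint_{Q_j} \varphi_j^2 |\nabla (u-k_j)_+|^2$ and $\esssup_t \int \varphi_j^2 (u-k_j)_+^2$ into Lemma~\ref{l.sobolev} applied slicewise to $\varphi_j(u-k_j)_+$ yields an $L^{2(n+2)/n}$ bound on the truncation. Chebyshev on $A_{j+1}$, where $u-k_j > k_{j+1}-k_j \gtrsim \boldsymbol{\omega}/2^j$, together with the normalization $|Q_j| \sim \rho^{n+2} \boldsymbol{\omega}^{q-1}$ in which the powers of $\boldsymbol{\omega}$ and $\rho$ cancel exactly, produces the recursion
\[
	Y_{j+1} \leq C B^j Y_j^{1+2/n},
	\qquad
	Y_j := \frac{|A_j|}{|Q_j|},
\]
with $B = 4^{(n+2)/n}$ and $C = C(n, q, C_o, C_1)$. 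The hypothesis gives $Y_0 \leq \nu$, so Lemma~\ref{l.fgc} with $\nu = \nu(n,q)$ small enough forces $Y_j \to 0$, i.e.~$u \leq \boldsymbol{\mu}^+ - \boldsymbol{\omega}/4$ a.e.\ in $Q_{\rho/2, \theta (\rho/2)^2}(z_o)$.

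The main technical obstacle is establishing the uniform comparison $\mathfrak{g}_+(u, k_j) \lesssim \boldsymbol{\omega}^{q-1}(u-k_j)_+^2$ inside the Caccioppoli integrand for all $q \in (0,\infty)$. In the degenerate regime $q \geq 1$ this follows directly from Lemma~\ref{est:calg} together with $|u|+|k_j| \lesssim \boldsymbol{\omega}$; in the singular regime $q<1$ the factor $(|u|+|k_j|)^{q-1}$ can exceed $\boldsymbol{\omega}^{q-1}$ on the subset of $\{u > k_j\}$ where $u$ is close to zero, and additional care is required to absorb this contribution using the position of the levels $k_j$ relative to $\boldsymbol{\mu}^+$. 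The reduction $|\boldsymbol{\mu}^+| \leq 2\boldsymbol{\omega}$ is precisely the input that allows the intrinsic scaling $\theta = \boldsymbol{\omega}^{q-1}$ to absorb the remaining $q$-dependent factors so that the geometric-convergence recursion closes.
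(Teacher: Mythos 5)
Your overall strategy matches the paper's almost exactly: reduce to $|\boldsymbol{\mu}^+|\leq 2\boldsymbol{\omega}$, set up the same nested cylinders and truncation levels converging to $\boldsymbol{\mu}^+-\tfrac14\boldsymbol{\omega}$, verify $k_j\geq\tfrac12(\boldsymbol{\mu}^++\boldsymbol{\mu}^-)\geq\sup\psi$ so that Lemma~\ref{l.caccioppoli}(i) applies, and run Caccioppoli $\to$ Sobolev $\to$ fast geometric convergence. The exact exponent in the recursion ($1+2/n$ in your version vs.\ $1+\tfrac{1}{n+2}$ in the paper) is a cosmetic difference coming from how Chebyshev and Sobolev are combined; either works with Lemma~\ref{l.fgc}.

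However, there is a genuine gap at exactly the place you flagged. You need $\mathfrak{g}_+(u,k_j)\lesssim\boldsymbol{\omega}^{q-1}(u-k_j)_+^2$ in the $\partial_t\varphi_j^2$ term, but for $q<1$ the bound $\mathfrak{g}_+(u,k_j)\leq c(|u|+|k_j|)^{q-1}(u-k_j)_+^2$ from Lemma~\ref{est:calg} gives a prefactor $(|u|+|k_j|)^{q-1}$ that blows up precisely where $|u|+|k_j|$ is small. You propose to cure this ``using the position of the levels $k_j$ relative to $\boldsymbol{\mu}^+$,'' but that does not close the argument: when $\boldsymbol{\mu}^+\in[\tfrac14\boldsymbol{\omega},\tfrac12\boldsymbol{\omega}]$ (which is allowed under $|\boldsymbol{\mu}^+|\leq 2\boldsymbol{\omega}$), the levels $k_j\in[\boldsymbol{\mu}^+-\tfrac12\boldsymbol{\omega},\,\boldsymbol{\mu}^+-\tfrac14\boldsymbol{\omega}]$ pass through zero, so $|k_j|$ and $|u|$ can both be arbitrarily small on a subset of $A_j$ where $(u-k_j)_+>0$ is tiny, and there is no lower bound on $|u|+|k_j|$ of order $\boldsymbol{\omega}$ there. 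The paper's fix is the elementary trade $(u-k_j)_+\leq|u|+|k_j|$, which turns the troublesome estimate into $\mathfrak{g}_+(u,k_j)\leq c(|u|+|k_j|)^q(u-k_j)_+\leq c(5\boldsymbol{\omega})^q\cdot\tfrac12\boldsymbol{\omega}$: the exponent $q>0$ on $|u|+|k_j|$ is now positive, so the bound is uniform without restricting $|u|+|k_j|$ from below. Without this trade your recursion does not close for $q<1$.

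A secondary, less serious issue: your stated recursion constant $B=4^{(n+2)/n}$ misses the $q$-dependent geometric factor that necessarily appears in the \emph{degenerate} case $q>1$. There the lower bound in the $\esssup$ Caccioppoli term is $(|u|+|k_j|)^{q-1}\geq(\tilde k_j-k_j)^{q-1}=(2^{-(j+4)}\boldsymbol{\omega})^{q-1}$ on $\{u\geq\tilde k_j\}$, and dividing through transfers a factor $2^{(q-1)(j+4)}$ to the right-hand side, i.e.\ $B$ must carry a $2^{(q-1)_+/(n+2)}$-type contribution (compare the paper's $B=2^{2+(q-1)_+/(n+2)}$). This geometric growth cannot be hidden in the $j$-independent constant $C$. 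It is harmless for Lemma~\ref{l.fgc}, but should be tracked if you want the claimed dependence $\nu=\nu(n,q)$.
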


\begin{proof}
We omit the fixed reference point $(x_o,t_o)$ for simplicity and assume that
$|\boldsymbol{\mu}^+| \leq 2 \boldsymbol{\omega}$.
Observe that by Lemma~\ref{est:calg} there holds
$$
\mathfrak{g}_+ (u,k) \leq c \left( |u| + |k| \right)^{q-1} \left( u-k \right)_+^2 \leq c \left( |u| + |k| \right)^q \left( u-k \right)_+
$$
for any $k \in \R$.
Further, for $\tilde{k} > k$ we have $(u-k)_+ \geq (u- \tilde{k})_+ $.
From the energy estimate, Lemma~\ref{l.caccioppoli} (i), we obtain
\begin{align*}
\esssup_{-\theta \rho^2<t<0} &\int_{B_\rho} \varphi^2 \left( |u| + |k| \right)^{q-1} ( u-\tilde{k} )_+^2\, \d x  + \iint_{Q_{\rho,\theta \rho^2}} \varphi^2 \left| \nabla (u-\tilde{k})_+ \right|^2 \, \d x \d t \\
&\leq c \iint_{Q_{\rho,\theta \rho^2}} (u-k)_+^2 |\nabla \varphi|^2 \, \d x \d t + c \iint_{Q_{\rho,\theta \rho^2}} (|u| + |k|)^q(u-k)_+ |\partial_t \varphi^2| \, \d x \d t
\end{align*}
for any $\tilde{k} > k \geq \sup_{Q_{\rho,\theta \rho^2}} \psi$ and $\varphi \in C^\infty(Q_{\rho,\theta \rho^2}; \R_{\geq 0})$ vanishing on the parabolic boundary of $Q_{\rho,\theta \rho^2}$.
For $j \in \N_0$ we choose
\begin{align*} 
    \begin{cases}
    k_j = \boldsymbol{\mu}^+ - \frac{\boldsymbol{\omega}}{4} - \frac{\boldsymbol{\omega}}{2^{j+2}},& \,\, \tilde k_j = \frac{k_j + k_{j+1}}{2},\\
    \rho_j = \frac{\rho}{2} + \frac{\rho}{2^{j+1}},&\,\, \tilde \rho_j = \frac{\rho_j + \rho_{j+1}}{2},\\
    B_j = B_{\rho_j},& \,\, \widetilde B_j = B_{\tilde \rho_j},\\
Q_j = Q_{\rho_j,\theta \rho_j^2},& \,\, \widetilde Q_j = Q_{\tilde \rho_j,\theta \tilde \rho_j^2}.    
    \end{cases}
\end{align*}
Note that $k_j \geq k_0 = \frac{1}{2} (\boldsymbol{\mu}^+ + \boldsymbol{\mu}^-) \geq \sup_{Q_0} \psi \geq \sup_{Q_j} \psi$ for any $k \in \N_0$, since $Q_{\rho,\theta \rho^2} = Q_0 \supset Q_1 \supset \ldots$ and by the assumption on $k_0$.
Furthermore, we use a smooth cut-off function $0 \leq \varphi \leq 1$ vanishing on the parabolic boundary of $Q_j$ and equal to identity in $\widetilde{Q}_j$ such that 
$$
|\nabla \varphi| \leq c \frac{2^j}{\rho}\quad \text{ and } \quad |\partial_t \varphi| \leq c \frac{2^{2j}}{\theta \rho^2}.
$$
Moreover, we set $A_j = \left\{ u > k_j \right\} \cap Q_j$.
Formulating the preceding energy estimate for these quantities and using that the assumption $|\boldsymbol{\mu}^+| \leq 2 \boldsymbol{\omega}$ implies  that $5 \boldsymbol{\omega} \geq |u| + |k_j| \geq u - k_j  \geq  \tilde{k}_j - k_j = 2^{-(j+4)} \boldsymbol{\omega}$ on the set where $u \geq \tilde{k}_j$, we find that
\begin{align*}
	\min\big\{ &5^{q-1}, 2^{-(q-1)(j+4)} \big\} \boldsymbol{\omega}^{q-1} \esssup_{-\theta \tilde{\rho}_j^2<t<0} \int_{\widetilde{B}_{j}} ( u-\tilde{k}_j )_+^2\, \d x
	+ \iint_{\widetilde{Q}_j} |\nabla (u-\tilde{k}_j)_+|^2 \, \d x \d t \\
	&\leq
	\esssup_{-\theta \tilde{\rho}_j^2<t<0}
	\int_{\widetilde{B}_{j}} \varphi^2 \left( |u| + |k_j| \right)^{q-1} ( u-\tilde{k}_j )_+^2\, \d x 
	+ \iint_{\widetilde{Q}_{j}} \varphi^2 \left| \nabla (u-\tilde{k}_j)_+ \right|^2 \, \d x \d t \\
	&\leq
	c\frac{2^{2j}}{\rho^2}\iint_{Q_j} (u-k_j)_+^2 \, \d x \d t + c \frac{2^{2j}}{ \theta \rho^2} \iint_{Q_j} (|u| + |k_j|)^q(u-k_j)_+  \, \d x \d t \\
	&\leq
	c \frac{2^{2j}}{\rho^2}\boldsymbol{\omega}^2 \left| A_j \right|.
\end{align*}
In particular, the estimate above gives us that
\begin{equation} \label{e.supestimate+}
	\esssup_{-\theta \tilde{\rho}_j^2<t<0} \int_{\widetilde{B}_{j}} ( u-\tilde{k}_j )_+^2\, \d x
	\leq
	c \frac{2^{(2 + (q-1)_+)j}}{\rho^2} \boldsymbol{\omega}^{3-q} |A_j|
\end{equation}
holds true.
By introducing a smooth cut-off function $0 \leq \phi \leq 1$, such that $\phi$ equals identity in $Q_{j+1}$ and vanishes on the lateral boundary of $\widetilde Q_j$ with $|\nabla \phi |\leq c 2^j \rho^{-1}$, H\"older's and the Sobolev embedding from Lemma~\ref{l.sobolev} inequality, we infer
\begin{align*}
	&\frac{\boldsymbol{\omega}}{2^{j+4}} |A_{j+1}|
	\leq
	\iint_{\widetilde Q_j} (u-\tilde k_j)_+ \phi  \, \d x \d t \\
	&\leq
	\left( \iint_{\widetilde Q_j} \left[ (u- \tilde k_j)_+ \phi \right] ^{2 \frac{n+2}{n}} \, \d x \d t\right)^\frac{n}{2(n+2)}
	\left| A_j \right|^{1- \frac{n}{2(n+2)}}  \\
	&\leq
	c \left( \iint_{\widetilde{Q}_j} \left|\nabla\left[ (u-\tilde{k}_j)_+ \phi  \right] \right|^2\, \d x \d t\right)^\frac{n}{2(n+2)}
	\left( \esssup_{-\theta \tilde{\rho}_j^2<t<0} \int_{\widetilde{B}_j} (u-\tilde{k}_j)_+^2 \, \d x \right)^\frac{1}{n+2} \\
	&\phantom{=}
	\cdot |A_{j}|^{1-\frac{n}{2(n+2)}} \\
	&\leq
	c \left( \frac{2^{2j}}{\rho^2} \boldsymbol{\omega}^2 |A_j| \right)^\frac{n}{2(n+2)}
	\left( \frac{2^{(2 + (q-1)_+)j}}{\rho^2} \boldsymbol{\omega}^{3-q} |A_j| \right)^\frac{1}{n+2}
	|A_{j}|^{1-\frac{n}{2(n+2)}} \\
	&\leq
	c\, 2^{\left( 1 + \frac{(q-1)_+}{n+2} \right) j} \rho^{-1} \boldsymbol{\omega}^{1 + \frac{1-q}{n+2}} |A_j|^{1+\frac{1}{n+2}}.
\end{align*}
Here we also used energy estimate once more and~\eqref{e.supestimate+}. This implies that
$$
	|A_{j+1}| \leq
	c\, 2^{\left( 2 + \frac{(q-1)_+}{n+2} \right)j} \rho^{-1} \boldsymbol{\omega}^{\frac{1-q}{n+2}} |A_j|^{1+\frac{1}{n+2}}.
$$
By dividing this by $|Q_{j+1}|$ and denoting $Y_j = |A_j|/|Q_j|$ we have
\begin{align*}
	Y_{j+1}
	&\leq
	c\, 2^{\left( 2 + \frac{(q-1)_+}{n+2} \right)j} \rho^{-1} \boldsymbol{\omega}^{\frac{1-q}{n+2}} \frac{|Q_j|^{1+\frac{1}{n+2}}}{|Q_{j+1}|} Y_j^{1+\frac{1}{n+2}} \\
	&\leq
	c \, 2^{\left( 2 + \frac{(q-1)_+}{n+2} \right)j} Y_j^{1+\frac{1}{n+2}}.
\end{align*}
Then, if $\nu \leq c^{-(n+2)} B^{-(n+2)^2}$, where $B = 2^{2 + \frac{(q-1)_+}{n+2}}$ we may use Lemma~\ref{l.fgc} to conclude the proof.
\end{proof}

We state the De Giorgi type lemma for the case where $u$ is away from its infimum without proof.
However, defining $k_j = \boldsymbol{\mu}^- + \tfrac{\boldsymbol{\omega}}{4} + \tfrac{\boldsymbol{\omega}}{2^{j+2}}$ and exploiting the energy estimate in Lemma~\ref{l.caccioppoli} (ii), it can be proven analogous to Lemma~\ref{l.de_giorgi_type+++}.
Observe that in this case any level is admissible in the energy estimate.

\begin{lemma} \label{l.de_giorgi_type---}
Let $u$ be a locally bounded, local weak solution to the obstacle problem and $Q_{\rho,\theta \rho^2}(z_o) \Subset \Omega_T$ with $\theta = \boldsymbol{\omega}^{q-1}$. Then, there exists a constant $\nu \in (0,1)$ depending on the data such that if
$$
|\{ u - \boldsymbol{\mu}^- \leq \tfrac12 \boldsymbol{\omega}\} \cap Q_{\rho,\theta \rho^2}(z_o)| \leq \nu |Q_{\rho,\theta \rho^2} (z_o)|,
$$
then either
$$
|\boldsymbol{\mu}^-| > 2 \boldsymbol{\omega},
$$
or
$$
u - \boldsymbol{\mu}^- \geq \tfrac{1}{4}\boldsymbol{\omega}\quad \text{a.e. in }  Q_{\frac{\rho}{2},\theta \left(\frac{\rho}{2}\right)^2}(z_o).
$$
\end{lemma}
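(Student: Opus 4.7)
The plan is to mirror the argument of Lemma~\ref{l.de_giorgi_type+++} verbatim with all signs reversed, using Lemma~\ref{l.caccioppoli}(ii) in place of (i). One may assume $|\boldsymbol{\mu}^-|\le 2\boldsymbol{\omega}$, since otherwise the first alternative in the conclusion holds. Define the \emph{decreasing} sequence of levels
\[ k_j := \boldsymbol{\mu}^- + \tfrac{\boldsymbol{\omega}}{4} + \tfrac{\boldsymbol{\omega}}{2^{j+2}},\qquad \tilde k_j := \tfrac{k_j+k_{j+1}}{2}, \]
so that $k_0=\tfrac{1}{2}(\boldsymbol{\mu}^-+\boldsymbol{\mu}^+)$, $k_j\downarrow \boldsymbol{\mu}^-+\boldsymbol{\omega}/4$, and $k_j-\tilde k_j=\boldsymbol{\omega}/2^{j+4}$. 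Use the same shrinking cylinders $Q_j\supset \widetilde Q_j\supset Q_{j+1}$ and cut-offs $\varphi,\phi$ with $|\nabla\varphi|,|\nabla\phi|\le c\,2^j/\rho$ and $|\partial_t\varphi^2|\le c\,2^{2j}/(\theta\rho^2)$ as in the positive case. The target is to show $|A_j|\to 0$, where $A_j:=\{u<k_j\}\cap Q_j$, which forces $u\ge \boldsymbol{\mu}^-+\boldsymbol{\omega}/4$ a.e.\ in $Q_{\rho/2,\theta(\rho/2)^2}(z_o)$.

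The crucial simplification relative to the positive case is that Lemma~\ref{l.caccioppoli}(ii) is valid for \emph{every} level $k\in\R$: the comparison map built from the lower truncation satisfies $w_h\ge \mollifytime{u}{h}+\|\psi-\mollifytime{\psi}{h}\|_{L^\infty}\ge \psi$ automatically, so no obstacle restriction on $k_j$ arises and the hypothesis on $\tfrac{1}{2}(\boldsymbol{\mu}^++\boldsymbol{\mu}^-)$ from the previous lemma can be dispensed with. On the set $\{u\le\tilde k_j\}$ one has $(u-k_j)_-\ge \boldsymbol{\omega}/2^{j+4}$ together with the trivial bound $|u|+|k_j|\ge (u-k_j)_-$, while globally $|u|+|k_j|\le C\boldsymbol{\omega}$ (using $|\boldsymbol{\mu}^+|\le |\boldsymbol{\mu}^-|+\boldsymbol{\omega}\le 3\boldsymbol{\omega}$). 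By Lemma~\ref{est:calg} these furnish the two-sided control
\[ \min\{C^{q-1},\,2^{-(q-1)(j+4)}\}\,\boldsymbol{\omega}^{q-1}(u-\tilde k_j)_-^2\;\lesssim\;\mathfrak g_-(u,\tilde k_j)\;\lesssim\;\boldsymbol{\omega}^q\,(u-\tilde k_j)_- \]
on the relevant sets, exactly parallelling the manipulation used for $\mathfrak g_+$ in the positive proof.

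Feeding these bounds into Lemma~\ref{l.caccioppoli}(ii) with the choice $\theta=\boldsymbol{\omega}^{q-1}$ produces, as in the positive case,
\[ \esssup_{-\theta\tilde\rho_j^2<t<0}\int_{\widetilde B_j}(u-\tilde k_j)_-^2\,\dx \;\le\; c\,\tfrac{2^{(2+(q-1)_+)j}}{\rho^2}\,\boldsymbol{\omega}^{3-q}\,|A_j|, \]
and an $L^2$-gradient bound of order $c\,2^{2j}\boldsymbol{\omega}^2|A_j|/\rho^2$. Applying the Sobolev embedding of Lemma~\ref{l.sobolev} to $(u-\tilde k_j)_-\phi$ together with H\"older's inequality and then bounding $(u-\tilde k_j)_-\ge\boldsymbol{\omega}/2^{j+4}$ on $A_{j+1}$ yields the recursion
\[ Y_{j+1}\;\le\; c\,2^{\bigl(2+\frac{(q-1)_+}{n+2}\bigr)j}\,Y_j^{1+\frac{1}{n+2}},\qquad Y_j:=\frac{|A_j|}{|Q_j|}, \]
and Lemma~\ref{l.fgc} delivers $Y_j\to 0$ provided $\nu=\nu(n,q)$ is chosen so that $Y_0\le c^{-(n+2)}B^{-(n+2)^2}$ with $B=2^{2+(q-1)_+/(n+2)}$.

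\textbf{Main obstacle.} The analysis is essentially a symmetric rerun of the previous argument, so the real work is bookkeeping: one must track that with $k_j$ now \emph{decreasing} in $j$ the correct monotonicity $(u-\tilde k_j)_-\le (u-k_j)_-$ (because $\tilde k_j<k_j$) replaces $(u-\tilde k_j)_+\le (u-k_j)_+$, and verify that the powers $(|u|+|k_j|)^{q-1}$ yield the same factor $\min\{C^{q-1},2^{-(q-1)(j+4)}\}\,\boldsymbol{\omega}^{q-1}$ in both the singular and degenerate regimes. Once these are in place, the remaining estimates—Caccioppoli, Sobolev, and De Giorgi's iteration—are formally identical to those in Lemma~\ref{l.de_giorgi_type+++}.
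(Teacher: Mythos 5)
Your proposal follows exactly the route the paper indicates for this lemma: mirror the De Giorgi iteration of Lemma~\ref{l.de_giorgi_type+++} with the decreasing levels $k_j = \boldsymbol{\mu}^- + \tfrac{\boldsymbol{\omega}}{4} + \tfrac{\boldsymbol{\omega}}{2^{j+2}}$, invoke the energy estimate of Lemma~\ref{l.caccioppoli}(ii) (correctly observing that, since the comparison map $\mollifytime{u}{h} + (\mollifytime{u}{h}-k)_- + \|\psi - \mollifytime{\psi}{h}\|_\infty$ is automatically above $\psi$, no restriction on the level $k$ is needed and the hypothesis $\tfrac12(\boldsymbol{\mu}^+ + \boldsymbol{\mu}^-)\ge\sup\psi$ can be dropped), and close the recursion with Lemma~\ref{l.sobolev} and Lemma~\ref{l.fgc}. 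The bookkeeping—$k_j - \tilde k_j = \boldsymbol{\omega}/2^{j+4}$, $(u-\tilde k_j)_- \le (u-k_j)_-$, the two-sided control of $\mathfrak g_-$ via Lemma~\ref{est:calg} with $|u|+|k_j| \le C\boldsymbol{\omega}$ and $|u|+|k_j|\ge(u-k_j)_-\ge 2^{-(j+4)}\boldsymbol{\omega}$ on $\{u\le\tilde k_j\}$—matches the positive case verbatim with signs flipped, and the resulting recursion $Y_{j+1}\le c\,2^{(2+(q-1)_+/(n+2))j}Y_j^{1+1/(n+2)}$ and choice of $\nu$ are identical. This is essentially the same proof as the paper intends.
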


\subsection{De Giorgi type lemmas in the non-degenerate/non-singular case}
In this section, we state De Giorgi type lemmas for the case where $u$ is bounded away from its supremum and the case where $u$ is bounded away from its infimum.
However, we only prove the former one, since the proof of the latter is analogous.
For the first lemma, we suppose that
\begin{equation} \label{e.away_zero_+}
\boldsymbol{\mu}^+ < - \tfrac14 \boldsymbol{\omega}
\end{equation}
holds true. Observe that this is equivalent to
\begin{equation*}
	\boldsymbol{\mu}^+ < \tfrac{1}{5} \boldsymbol{\mu}^-.
\end{equation*}
In the first lemma we use the scaling 
\begin{equation} \label{e.scaling_awayzero-}
\left| \boldsymbol{\mu}^- \right| \leq \theta^\frac{1}{q-1} \leq  \left|5 \boldsymbol{\mu}^-\right|.
\end{equation}

\begin{lemma} \label{l.de_giorgi_awayzero+}
Assume that \eqref{e.away_zero_+} and \eqref{e.scaling_awayzero-} hold and let $u$ be a locally bounded, local weak solution to the obstacle problem and $Q_{\rho,\theta \rho^2}(z_o) \Subset \Omega_T$. Furthermore, suppose that $\tfrac12 \left( \boldsymbol{\mu}^+ + \boldsymbol{\mu}^- \right) \geq \sup_{Q_{\rho,\theta \rho^2}(z_o)} \psi$. Then, there exists a constant $\nu \in (0,1)$ such that if
$$
\left|\{ \boldsymbol{\mu}^+ - u \leq \tfrac12 \boldsymbol{\omega} \} \cap  Q_{\rho,\theta \rho^2}(z_o)\right| \leq \nu \left|Q_{\rho,\theta \rho^2}(z_o)\right|,
$$
then
$$
\boldsymbol{\mu}^+ - u \geq \tfrac{1}{4}\boldsymbol{\omega}\quad \text{ a.e. in }  Q_{\frac{\rho}{2},\theta \left(\frac{\rho}{2}\right)^2}(z_o).
$$
\end{lemma}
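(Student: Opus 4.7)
The plan is to mimic the De Giorgi iteration used to prove Lemma~\ref{l.de_giorgi_type+++}, taking advantage of the fact that in the present regime $u$ stays uniformly bounded away from zero, so that the equation effectively behaves like a linear one. I first extract the geometric consequences of the hypotheses: from $\boldsymbol{\mu}^+ < -\tfrac14 \boldsymbol{\omega}$ and $\boldsymbol{\omega} = \boldsymbol{\mu}^+ - \boldsymbol{\mu}^-$ one obtains $\boldsymbol{\mu}^- \leq -\tfrac54 \boldsymbol{\omega}$ and $|\boldsymbol{\mu}^+| \geq \tfrac15 |\boldsymbol{\mu}^-|$; since $\boldsymbol{\mu}^- \leq u \leq \boldsymbol{\mu}^+ < 0$ throughout $Q_{\rho,\theta \rho^2}(z_o)$, this yields the uniform two-sided bound $\tfrac15 |\boldsymbol{\mu}^-| \leq |u| \leq |\boldsymbol{\mu}^-|$. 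Combined with \eqref{e.scaling_awayzero-}, this gives $|u|^{q-1} \approx \theta$ with constants depending only on $q$, which is the non-degeneracy feature the iteration will exploit; moreover, the same equivalence $|u|+|k| \approx |\boldsymbol{\mu}^-| \approx \theta^{1/(q-1)}$ holds for every truncation level $k$ in the range $[\boldsymbol{\mu}^+-\tfrac{\boldsymbol{\omega}}{2}, \boldsymbol{\mu}^+-\tfrac{\boldsymbol{\omega}}{4}]$ that will appear below.

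I then copy the nested setup of the proof of Lemma~\ref{l.de_giorgi_type+++}: for $j \in \N_0$, let $k_j := \boldsymbol{\mu}^+ - \tfrac{\boldsymbol{\omega}}{4} - \tfrac{\boldsymbol{\omega}}{2^{j+2}}$, $\tilde k_j := \tfrac{k_j + k_{j+1}}{2}$, $\rho_j := \tfrac{\rho}{2} + \tfrac{\rho}{2^{j+1}}$, $\tilde \rho_j := \tfrac{\rho_j + \rho_{j+1}}{2}$, with corresponding balls $B_j, \tilde B_j$, cylinders $Q_j, \tilde Q_j$, superlevel sets $A_j := \{u > k_j\} \cap Q_j$, and a smooth cut-off $\varphi$ vanishing on the parabolic boundary of $Q_j$ and equal to $1$ on $\tilde Q_j$ with $|\nabla \varphi| \leq c\, 2^j/\rho$ and $|\partial_t \varphi| \leq c\, 4^j/(\theta \rho^2)$. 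The hypothesis $k_0 = \tfrac12(\boldsymbol{\mu}^+ + \boldsymbol{\mu}^-) \geq \sup_{Q_{\rho,\theta\rho^2}(z_o)} \psi$ makes every $k_j$ admissible in Lemma~\ref{l.caccioppoli}(i). Using Lemma~\ref{est:calg} together with $|u|+|k_j| \approx |\boldsymbol{\mu}^-|$ on $A_j$, so that $\mathfrak g_+(u,k_j) \approx \theta\, (u-k_j)_+^2$, and bounding $(u-k_j)_+ \leq \boldsymbol{\omega}$, the energy estimate collapses to
\[
	\theta \esssup_{-\theta \tilde \rho_j^2 < t < 0} \int_{\tilde B_j} (u - \tilde k_j)_+^2 \, \d x + \iint_{\tilde Q_j} |\nabla (u - \tilde k_j)_+|^2 \, \d x \d t \leq c \frac{4^j}{\rho^2} \boldsymbol{\omega}^2 |A_j|,
\]
where the $\theta$ appearing on the left and the $\theta$ coming from $\mathfrak g_+(u,k_j)|\partial_t \varphi^2|$ on the right balance thanks to \eqref{e.scaling_awayzero-}.

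These two bounds feed into the parabolic Sobolev inequality (Lemma~\ref{l.sobolev}) applied to $(u - \tilde k_j)_+ \phi$ for a spatial cut-off $\phi$ equal to $1$ on $B_{j+1}$ and supported in $\tilde B_j$. Estimating the left-hand side of the resulting inequality from below by $\tfrac{\boldsymbol{\omega}}{2^{j+4}} |A_{j+1}|$ (via H\"older together with $(u - \tilde k_j)_+ \geq k_{j+1} - \tilde k_j = \tfrac{\boldsymbol{\omega}}{2^{j+4}}$ on $A_{j+1}$), a routine bookkeeping --- structurally identical to the one in Lemma~\ref{l.de_giorgi_type+++}, but with no residual $2^{(q-1)_+ j}$ factor since $|u|$ never approaches zero --- leads to the recursion
\[
	Y_{j+1} \leq c\, 4^j\, Y_j^{1 + \frac{1}{n+2}}, \qquad Y_j := \frac{|A_j|}{|Q_j|},
\]
in which the exponent of $\theta$ cancels exactly, again by \eqref{e.scaling_awayzero-}. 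Fast geometric convergence (Lemma~\ref{l.fgc}) then forces $Y_j \to 0$ as soon as $Y_0 \leq \nu$ for some $\nu = \nu(n, q, C_o, C_1) \in (0,1)$, which is precisely the hypothesis; passing to the limit $j \to \infty$ translates this into $u \leq \boldsymbol{\mu}^+ - \tfrac14 \boldsymbol{\omega}$ a.e.\ in $Q_{\rho/2, \theta (\rho/2)^2}(z_o)$, which is the claim. The main point that requires care is the bookkeeping of powers of $\theta$ and $\boldsymbol{\omega}$ in both the energy and the Sobolev steps, with separate but parallel treatment of the ranges $q \in (0,1)$ and $q > 1$ (in which the two-sided inclusion \eqref{e.scaling_awayzero-} pins $\theta$ to $|\boldsymbol{\mu}^-|^{q-1}$ from opposite sides, because the sign of $1/(q-1)$ flips); once this cancellation is verified, the rest follows the standard De Giorgi template.
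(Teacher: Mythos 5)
Your proposal is correct and follows essentially the same De Giorgi iteration that the paper uses for this lemma: identical sequences $k_j$, $\tilde k_j$, $\rho_j$, the same admissibility check via $k_0 = \tfrac12(\boldsymbol{\mu}^+ + \boldsymbol{\mu}^-) \geq \sup \psi$, the same exploitation of $|u|+|k_j| \approx |\boldsymbol{\mu}^-| \approx \theta^{1/(q-1)}$ (via $2|\boldsymbol{\mu}^+| \leq |u|+|k_j| \leq 2|k_j|$ and \eqref{e.away_zero_+}, \eqref{e.scaling_awayzero-}) to eliminate the $q$-dependence, and the same recursion $Y_{j+1} \leq c\,2^{2j} Y_j^{1+1/(n+2)}$ fed into Lemma~\ref{l.fgc}. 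There is no substantive difference from the paper's argument.
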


\begin{proof}
For simplicity we omit the reference point $z_o$ in the notation. Observe that 
$$
\mathfrak{g}_+ (u,k) \approx \left( |u| + |k| \right)^{q-1} \left( u-k \right)_+^2
\quad \text{for any } k \in \R
$$
up to a constant depending only on $q$ by Lemma \ref{est:calg}.
Further, for $\tilde{k} > k$ we have that $(u-k)_+ \geq (u- \tilde{k})_+ $. From the energy estimate, Lemma~\ref{l.caccioppoli} (i), we obtain
\begin{align*}
&\esssup_{-\theta \rho^2<t<0} \int_{B_\rho} \varphi^2 \left( |u| + |k| \right)^{q-1} ( u-\tilde{k})_+^2\, \d x  + \iint_{Q_{\rho,\theta \rho^2}} \varphi^2 \left| \nabla (u-\tilde{k})_+ \right|^2 \, \d x \d t \\
&\leq c \iint_{Q_{\rho,\theta \rho^2}} (u-k)_+^2 |\nabla \varphi|^2 \, \d x \d t + c \iint_{Q_{\rho,\theta \rho^2}} (|u| + |k|)^{q-1} (u-k)_+^2 |\partial_t \varphi^2| \, \d x \d t
\end{align*}
for $\tilde{k} > k \geq \sup_{Q_{\rho,\theta \rho^2}} \psi$ and $\varphi \in C^\infty(Q_{\rho,\theta \rho^2}; \R_{\geq 0})$ vanishing on the parabolic boundary of $Q_{\rho,\theta \rho^2}$.
For $k \in \N_0$, we choose $k_j$, $\tilde{k}_j$, $\rho_j$, $\tilde{\rho}_j$,
$B_j$, $\widetilde B_j$, $Q_j$, $\widetilde Q_j$ and the cut-off function $\varphi$ as in the proof of Lemma~\ref{l.de_giorgi_type+++}.
However, here \eqref{e.away_zero_+} in particular implies that $k_j < \boldsymbol{\mu}^+ < 0$ for any $j \in \N_0$.
Thus, we know that $2 \left| \boldsymbol{\mu}^+ \right| \leq 2|u| \leq |u|+ |k_j| \leq 2 |k_j|$ in $A_j := \{ u > k_j \} \cap Q_j$, which gives us that
\begin{align*}
\min &\left\{ \left| \boldsymbol{\mu}^+ \right|^{q-1}, |k_j|^{q-1} \right\} \esssup_{-\theta \tilde{\rho}_j^2<t<0} \int_{\widetilde{B}_{j}} ( u-\tilde{k}_j )_+^2\, \d x + \iint_{\widetilde{Q}_j} |\nabla (u-\tilde{k}_j)_+|^2 \, \d x \d t \\
	&\leq
\esssup_{-\theta \tilde{\rho}_j^2<t<0} \int_{\widetilde{B}_{j}} \varphi^2 \left( |u| + |k_j| \right)^{q-1} ( u-\tilde{k}_j )_+^2\, \d x  + \iint_{\widetilde{Q}_{j}} \varphi^2 | \nabla (u-\tilde{k}_j)_+ |^2 \, \d x \d t \\
&\leq c\frac{2^{2j}}{\rho^2}\iint_{Q_j} (u-k_j)_+^2 \, \d x \d t + c \frac{2^{2j}}{\theta \rho^2} \iint_{Q_j} (|u| + |k_j|)^{q-1}(u-k_j)_+^2  \, \d x \d t \\
&\leq c \frac{2^{2j} \boldsymbol{\omega}^2}{\rho^2}\left(1 + \max\left\{ \left| \boldsymbol{\mu}^+ \right|^{q-1}, |k_j|^{q-1} \right\} \theta^{-1} \right) \left| A_j \right|.
\end{align*}
Observe that the bounds $\frac{1}{5} |\boldsymbol{\mu}^-| < |\boldsymbol{\mu}^+| < |\boldsymbol{\mu}^-|$ and $|\boldsymbol{\mu}^+| < |k_j| < |\boldsymbol{\mu}^-|$
hold true by \eqref{e.away_zero_+} and the definition of $k_j$.
Taking also \eqref{e.scaling_awayzero-} into account, we estimate the preceding inequality further.
In particular, we conclude that
$$
	\esssup_{-\theta \tilde{\rho}_j^2<t<0} \int_{\widetilde{B}_{j}} ( u-\tilde{k}_j )_+^2\, \d x
	\leq
	\frac{c \boldsymbol{\omega}^2}{\theta} \frac{2^{2j}}{\rho^2} |A_j|.
$$
Next, note that $u - k_j \geq \tilde{k}_j -  k_j  = 2^{-(j+4)} \boldsymbol{\omega}$ if $u \geq \tilde{k}_j$.
Using H\"older's inequality and the Sobolev inequality from Lemma \ref{l.sobolev}, we find that
\begin{align*}
	\frac{\boldsymbol{\omega}}{2^{j+4}} &|A_{j+1}| \\
	&\leq
	c \left( \iint_{\widetilde{Q}_j} |\nabla [ (u-\tilde{k}_j)_+ \phi ] |^2\, \d x \d t\right)^\frac{n}{2(n+2)}
	\left( \esssup_{-\theta \tilde{\rho}_j^2<t<0} \int_{\widetilde{B}_j} (u-\tilde{k}_j)_+^2 \, \d x \right)^\frac{1}{n+2} \\
	&\phantom{=}
	\cdot |A_{j}|^{1-\frac{n}{2(n+2)}} \\
	&\leq
	c \left( \frac{2^{2j} \boldsymbol{\omega}^2}{\rho^2} |A_j| \right)^\frac{n}{2(n+2)} 
	\left( \frac{2^{2j} \boldsymbol{\omega}^2}{\theta \rho^2} |A_j| \right)^\frac{1}{n+2} |A_{j}|^{1-\frac{n}{2(n+2)}} \\
	&\leq
	c \, 2^{j} \boldsymbol{\omega} \theta^{-\frac{1}{n+2}} \rho^{-1} |A_j|^{1+\frac{1}{n+2}}.
\end{align*}
This implies that
$$
	|A_{j+1}| \leq
	c \, 2^{2j} \theta^{-\frac{1}{n+2}} \rho^{-1} |A_j|^{1+\frac{1}{n+2}}.
$$
Dividing the preceding inequality by $|Q_{j+1}|$ and denoting $Y_j = |A_j|/|Q_j|$ we have that
\begin{align*}
	Y_{j+1}
	&\leq
	c \, \frac{2^{2j}}{\theta^\frac{1}{n+2} \rho} \frac{|Q_j|^{1+\frac{1}{n+2}}}{|Q_{j+1}|} Y_j^{1+\frac{1}{n+2}}
	\leq
	c \, 2^{2j} Y_j^{1+\frac{1}{n+2}}.
\end{align*}
Thus, we are able to conclude the proof by using Lemma~\ref{l.fgc}.
\end{proof}


Next, we suppose that
\begin{equation} \label{e.away_zero_-}
\boldsymbol{\mu}^- > \tfrac14 \boldsymbol{\omega}
\end{equation}
holds true, which implies
$$
0 \leq \tfrac14 \boldsymbol{\omega} < \boldsymbol{\mu}^- \leq \boldsymbol{\mu}^+ < 5 \boldsymbol{\mu}^-.
$$
Further, we use the scaling 
\begin{equation} \label{e.scaling_awayzero+}
\tfrac12 \boldsymbol{\mu}^+ \leq \theta^\frac{1}{q-1} \leq 5 \boldsymbol{\mu}^+.
\end{equation}

\begin{lemma} \label{l.de_giorgi_awayzero-}
Consider $Q_{\rho,\theta \rho^2} \Subset \Omega_T$, assume that \eqref{e.away_zero_-} and \eqref{e.scaling_awayzero+} hold and let $u$ be a locally bounded, local weak solution to the obstacle problem. Then, there exists a constant $\nu \in (0,1)$ such that if
$$
\left| \{ u - \boldsymbol{\mu}^- \leq \tfrac12 \boldsymbol{\omega} \} \cap Q_{\rho,\theta \rho^2}(z_o) \right| \leq \nu \left|Q_{\rho,\theta \rho^2}(z_o)\right|,
$$
then
$$
u - \boldsymbol{\mu}^- \geq \tfrac{1}{4}\boldsymbol{\omega}\quad \text{ a.e. in } Q_{\frac{\rho}{2},\theta \left( \frac{\rho}{2}\right)^2}(z_o).
$$
\end{lemma}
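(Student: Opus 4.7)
The plan is to mirror the De Giorgi iteration in Lemma~\ref{l.de_giorgi_awayzero+}, with all truncations now taken from below and the roles of $\boldsymbol{\mu}^+$ and $\boldsymbol{\mu}^-$ swapped. First I fix the decreasing sequence of levels $k_j := \boldsymbol{\mu}^- + \tfrac{\boldsymbol{\omega}}{4} + \tfrac{\boldsymbol{\omega}}{2^{j+2}}$ and $\tilde k_j := \tfrac{k_j + k_{j+1}}{2}$, together with the shrinking radii $\rho_j, \tilde\rho_j$, balls $B_j, \widetilde B_j$ and cylinders $Q_j, \widetilde Q_j$ exactly as in the proof of Lemma~\ref{l.de_giorgi_type+++}, and an associated cut-off $\varphi$ vanishing on the parabolic boundary of $Q_j$, identically $1$ on $\widetilde Q_j$, with $|\nabla\varphi|\le c\,2^j/\rho$ and $|\partial_t\varphi|\le c\,2^{2j}/(\theta\rho^2)$. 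Set $A_j := \{u < k_j\}\cap Q_j$. Because the truncation is now from below, Lemma~\ref{l.caccioppoli}(ii) applies at every level without any restriction involving the obstacle, which explains the absence of a hypothesis on $\sup\psi$ in the present statement.

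The key observation that makes the iteration behave as in the linear case is that on $A_j$ one has $\boldsymbol{\mu}^-\le u \le k_j <\boldsymbol{\mu}^+$, so \eqref{e.away_zero_-} together with \eqref{e.scaling_awayzero+} yield the two-sided comparison $|u|+|k_j|\approx \boldsymbol{\mu}^+ \approx \theta^{1/(q-1)}$ with constants depending only on $q$, and in particular $(|u|+|k_j|)^{q-1}\approx\theta$. Applying Lemma~\ref{est:calg} to $\mathfrak{g}_-(u,k_j)$ and invoking Lemma~\ref{l.caccioppoli}(ii) with the cut-off $\varphi$ therefore gives
$$\theta\,\esssup_{-\theta\tilde\rho_j^2<t<0}\int_{\widetilde B_j}(u-\tilde k_j)_-^2\,\dx + \iint_{\widetilde Q_j}|\nabla(u-\tilde k_j)_-|^2\,\dx\dt \leq c\,\frac{2^{2j}\boldsymbol{\omega}^2}{\rho^2}|A_j|,$$
the spatial and temporal pieces of the right-hand side both collapsing to $c\,2^{2j}\boldsymbol{\omega}^2\rho^{-2}|A_j|$ precisely thanks to $(|u|+|k_j|)^{q-1}\approx\theta$. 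Dividing the supremum term by $\theta$ yields the analogue of~\eqref{e.supestimate+}, namely $\esssup_{t}\int_{\widetilde B_j}(u-\tilde k_j)_-^2\,\dx \leq c\theta^{-1}\boldsymbol{\omega}^2 2^{2j}\rho^{-2}|A_j|$.

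The iteration is then closed exactly as in the proof of Lemma~\ref{l.de_giorgi_awayzero+}. Introducing a further cut-off $\phi$ equal to $1$ on $Q_{j+1}$ and vanishing on the lateral boundary of $\widetilde Q_j$, I use that $\tilde k_j - u \geq \tilde k_j - k_{j+1} = 2^{-(j+4)}\boldsymbol{\omega}$ on $A_{j+1}$ to write
$$\tfrac{\boldsymbol{\omega}}{2^{j+4}}|A_{j+1}| \leq \iint_{\widetilde Q_j}(u-\tilde k_j)_-\,\phi\,\dx\dt,$$
then apply H\"older's inequality and the parabolic Sobolev embedding of Lemma~\ref{l.sobolev} to $(u-\tilde k_j)_-\phi$ and plug in the two energy bounds. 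The exponents combine verbatim as in the preceding lemma to produce $|A_{j+1}|\le c\,2^{2j}\theta^{-1/(n+2)}\rho^{-1}|A_j|^{1+1/(n+2)}$ and, after normalizing by $|Q_{j+1}|$ and using $|Q_j|/|Q_{j+1}|\approx 1$, the recursion $Y_{j+1}\le c\,2^{2j}Y_j^{1+1/(n+2)}$ with $Y_j := |A_j|/|Q_j|$. Choosing $\nu$ small enough depending only on $n,q,C_o,C_1$ and applying Lemma~\ref{l.fgc} forces $Y_j\to 0$, which is exactly the claim.

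The step that needs the most care is checking that \eqref{e.scaling_awayzero+} together with \eqref{e.away_zero_-} really yields $(|u|+|k_j|)^{q-1}\approx\theta$ \emph{uniformly in} $j$ on $A_j$, since this is what absorbs the $q$-dependent factors in both the gradient and the time parts of the energy estimate and lets the recursion close without any power of $\boldsymbol{\omega}$ surviving in the constant. Everything else is a routine adaptation of the computation already carried out for Lemma~\ref{l.de_giorgi_awayzero+}.
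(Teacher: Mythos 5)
Your proposal is correct and takes exactly the approach the paper intends: the paper states Lemma~\ref{l.de_giorgi_awayzero-} without proof, remarking only that it is analogous to Lemma~\ref{l.de_giorgi_awayzero+}, and your argument is precisely that analogue, with truncations from below, levels $k_j = \boldsymbol{\mu}^- + \tfrac{\boldsymbol{\omega}}{4} + \tfrac{\boldsymbol{\omega}}{2^{j+2}}$, the energy estimate of Lemma~\ref{l.caccioppoli}(ii) (so no restriction on levels, which is why no $\sup\psi$ hypothesis is needed), and the key comparability $|u|+|k_j|\approx\boldsymbol{\mu}^+\approx\theta^{1/(q-1)}$ on $A_j$ following from \eqref{e.away_zero_-} and \eqref{e.scaling_awayzero+}. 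The iteration then closes verbatim as in the proof of Lemma~\ref{l.de_giorgi_awayzero+}.
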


\section{Second alternative} \label{s.2alternative}
\subsection{Second alternative near infimum}
Suppose that $\boldsymbol{\mu}^+, \boldsymbol{\mu}^-$ and $\boldsymbol{\omega}$ are given by \eqref{eq:mu_basic}.
In this section we assume that either
\begin{equation}
	\label{e.mu-_bounds_nearzero}
	- \tfrac14 \boldsymbol{\omega} \leq \boldsymbol{\mu}^+ \leq \tfrac12 \boldsymbol{\omega}
	\quad \text{ and }\quad
	\theta = \boldsymbol{\omega}^{q-1}
\end{equation}
holds true, or 
\begin{equation} \label{e.mu-_bounds_awayzero}
\boldsymbol{\mu}^+ < - \tfrac14 \boldsymbol{\omega}\quad \text{ and }\quad \left| \boldsymbol{\mu}^- \right| \leq \theta^\frac{1}{q-1} \leq \left| 5 \boldsymbol{\mu}^- \right|.
\end{equation}
Since $\boldsymbol{\mu}^+ = \boldsymbol{\mu}^- + \boldsymbol{\omega}$, \eqref{e.mu-_bounds_nearzero}$_1$ is equivalent to
\begin{equation*} 
	- \tfrac54 \boldsymbol{\omega} \leq \boldsymbol{\mu}^- \leq - \tfrac12 \boldsymbol{\omega}.
\end{equation*}
Further, observe that~\eqref{e.mu-_bounds_awayzero}$_1$ implies
$$
0 > - \tfrac14 \boldsymbol{\omega} > \boldsymbol{\mu}^+ \geq \boldsymbol{\mu}^- > 5 \boldsymbol{\mu}^+.
$$
First we prove an auxiliary lemma. 
\begin{lemma} \label{l.de_giorgi2_scheven2}
Let $Q_{\rho,\theta \rho^2}(z_o) \Subset \Omega_T$ be a parabolic cylinder and $\nu \in (0,1)$ and $\eta \in \big(0,\frac{1}{8}\big]$.
Assume that \eqref{e.mu-_bounds_nearzero} or \eqref{e.mu-_bounds_awayzero} holds and that $u$ is a locally bounded, local weak solution to the obstacle problem.
Then there exists $\nu_1 = \nu_1(n,q,C_o,C_1,\nu) \in (0,1)$ such that if
$$
\left| \left\{ u - \boldsymbol{\mu}^- < \eta \boldsymbol{\omega} \right\}
\cap Q_{\rho,\frac12 \nu \theta \rho^2} \right|< \nu_1 \left| Q_{\rho,\frac12 \nu \theta \rho^2} \right|, 
$$
then
$$
u - \boldsymbol{\mu}^- \geq \tfrac12 \eta \boldsymbol{\omega}\quad \text{ a.e. in } Q_{\frac{\rho}{2},\frac12 \nu \theta \left(\frac{\rho}{2}\right)^2}
$$
\end{lemma}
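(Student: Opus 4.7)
The strategy is to mimic the De Giorgi iteration carried out in Lemmas~\ref{l.de_giorgi_type+++} and~\ref{l.de_giorgi_awayzero+}, but now applied to truncations from below, using the energy estimate Lemma~\ref{l.caccioppoli}(ii). A crucial simplification compared to the truncations-from-above case is that Lemma~\ref{l.caccioppoli}(ii) places no restriction on the level $k$, so the obstacle function enters nowhere in the argument. For $j \in \N_0$ I would set
\begin{align*}
	k_j &:= \boldsymbol{\mu}^- + \tfrac{\eta\boldsymbol{\omega}}{2} + \tfrac{\eta\boldsymbol{\omega}}{2^{j+1}}, &\tilde k_j &:= \tfrac{k_j+k_{j+1}}{2}, \\
	\rho_j &:= \tfrac{\rho}{2} + \tfrac{\rho}{2^{j+1}}, &\tilde\rho_j &:= \tfrac{\rho_j+\rho_{j+1}}{2},
\end{align*}
and consider the cylinders $Q_j := Q_{\rho_j,\frac{1}{2}\nu\theta\rho_j^2}(z_o)$ and $\widetilde Q_j := Q_{\tilde\rho_j,\frac{1}{2}\nu\theta\tilde\rho_j^2}(z_o)$, together with standard cut-offs $\varphi_j$ equal to $1$ on $\widetilde Q_j$, vanishing on the parabolic boundary of $Q_j$, and satisfying $|\nabla\varphi_j|\leq c2^j/\rho$ and $|\partial_t\varphi_j|\leq c2^{2j}/(\nu\theta\rho^2)$.

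The key algebraic observation is that on the set where $u\leq k_j$ one has $(|u|+|k_j|)^{q-1}\approx\theta$ under either assumption~\eqref{e.mu-_bounds_nearzero} or~\eqref{e.mu-_bounds_awayzero}, with constants depending only on $q$. Indeed, under~\eqref{e.mu-_bounds_nearzero} the bounds $-\tfrac{5}{4}\boldsymbol{\omega}\leq\boldsymbol{\mu}^-\leq-\tfrac{1}{2}\boldsymbol{\omega}$ together with $\eta\leq\tfrac{1}{8}$ force $|k_j|\in[\tfrac{3}{8}\boldsymbol{\omega},\tfrac{5}{4}\boldsymbol{\omega}]$ and $|u|\leq\tfrac{5}{4}\boldsymbol{\omega}$ in $Q_j$, so $|u|+|k_j|\approx\boldsymbol{\omega}=\theta^{1/(q-1)}$. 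Under~\eqref{e.mu-_bounds_awayzero} the bound $\boldsymbol{\mu}^+<-\tfrac{1}{4}\boldsymbol{\omega}$ forces $|\boldsymbol{\mu}^-|>\tfrac{5}{4}\boldsymbol{\omega}$, and on $\{u\leq k_j\}$ both $|u|$ and $|k_j|$ are comparable to $|\boldsymbol{\mu}^-|$; combining this with $|\boldsymbol{\mu}^-|\leq\theta^{1/(q-1)}\leq 5|\boldsymbol{\mu}^-|$ again yields $|u|+|k_j|\approx\theta^{1/(q-1)}$. Via Lemma~\ref{est:calg} this translates into $\mathfrak{g}_-(u,k_j)\approx\theta(u-k_j)_-^2$ on the relevant set.

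Plugging this equivalence into the energy estimate and using the trivial bound $(u-k_j)_-\leq k_j-\boldsymbol{\mu}^-\leq\eta\boldsymbol{\omega}$ produces
\begin{equation*}
	\theta\esssup_{t}\int_{\widetilde B_j\times\{t\}}(u-\tilde k_j)_-^2\,\dx
	+\iint_{\widetilde Q_j}|\nabla(u-\tilde k_j)_-|^2\,\dx\dt
	\leq c\,\frac{2^{2j}}{\nu\rho^2}(\eta\boldsymbol{\omega})^2|A_j|,
\end{equation*}
where $A_j:=\{u<k_j\}\cap Q_j$. Inserting this into the Sobolev inequality Lemma~\ref{l.sobolev} applied to $(u-\tilde k_j)_-\phi$ on $\widetilde Q_j$, and using that $(u-\tilde k_j)_-\geq c\eta\boldsymbol{\omega}/2^j$ on $A_{j+1}$, the powers of $\eta\boldsymbol{\omega}$ cancel exactly between the two sides; after dividing by $|Q_{j+1}|\approx\nu\theta\rho^{n+2}$ one arrives at the standard recursion $Y_{j+1}\leq C B^j Y_j^{1+2/(n+2)}$ for $Y_j:=|A_j|/|Q_j|$, with $C$ and $B$ depending only on $n,q,C_o,C_1,\nu$. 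Lemma~\ref{l.fgc} then closes the argument once $\nu_1$ is chosen as the critical smallness threshold of $Y_0$.

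The main obstacle I expect is precisely the algebraic step $\mathfrak{g}_-(u,k_j)\approx\theta(u-k_j)_-^2$: one has to treat the degenerate ($q<1$) and singular ($q>1$) regimes together, and the two a priori quite different scenarios~\eqref{e.mu-_bounds_nearzero} and~\eqref{e.mu-_bounds_awayzero}, while simultaneously keeping track that the resulting constants are independent of $\eta$, $\boldsymbol{\omega}$ and $\boldsymbol{\mu}^\pm$ (so that $\nu_1$ depends only on the claimed quantities). Once this uniform equivalence is established, the intrinsic geometry of $Q_j$ balances the scaling and the iteration itself is routine.
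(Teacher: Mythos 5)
Your proposal follows essentially the same path as the paper's proof: identical choice of levels $k_j$ and cylinders $Q_j$, the same invocation of Lemma~\ref{l.caccioppoli}(ii) for truncations from below (where no obstacle restriction is needed), the same crucial uniform comparability $(|u|+|k_j|)^{q-1}\approx\theta$ on the set $\{u<k_j\}$ in both scenarios \eqref{e.mu-_bounds_nearzero} and \eqref{e.mu-_bounds_awayzero}, and then the standard Sobolev plus fast geometric convergence iteration. One small slip: the recursion you state, $Y_{j+1}\leq CB^jY_j^{1+2/(n+2)}$, should have exponent $1+\tfrac{1}{n+2}$ (the paper obtains $Y_{j+1}\leq c\,2^{2j}Y_j^{1+1/(n+2)}$); this is cosmetic and does not affect the applicability of Lemma~\ref{l.fgc} or the conclusion.
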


\begin{proof}
We omit $z_o$ for simplicity and start the proof by defining
$$
	\begin{array}{c}
		k_j = \boldsymbol{\mu}^- + \bigg( \frac{\eta}{2} + \frac{\eta}{2^{j+1}} \bigg) \boldsymbol{\omega},
		\tilde{k}_j = \tfrac{k_j + k_{j+1}}{2},
		\rho_j = \frac{\rho}{2} + \frac{\rho}{2^{j+1}},
		\tilde{\rho}_j = \tfrac{\rho_j + \rho_{j+1}}{2}, \\
		B_j = B_{\rho_j},
		\widetilde{B}_j = B_{\tilde{\rho}_j},
		Q_j := Q_{\rho_j, \frac12 \nu\theta \rho_j^2},
		\widetilde{Q}_j := Q_{\tilde{\rho}_j, \frac12 \nu\theta \tilde{\rho}_j^2}.
	\end{array}
$$
Observe that by~\eqref{e.mu-_bounds_nearzero} and~\eqref{e.mu-_bounds_awayzero} it follows that $k_j < 0$ for all $j \in \N_0$.
Let $0 \leq \varphi \leq 1$ be a cut-off function that equals identity in $\widetilde Q_j$ and vanishes on the parabolic boundary of $Q_j$ such that
$$
\left| \nabla \varphi \right| \leq c\frac{2^j}{\rho}\quad \text{ and }\quad \left| \partial_t \varphi \right|\leq c \frac{2^{2j}}{\nu \theta \rho^2}.
$$
From the fact that $\boldsymbol{\mu}^- \leq u < k_j < 0$ in the set $A_j := \{ u < k_j \} \cap Q_j$, Lemma~\ref{est:calg} and the energy estimate in Lemma~\ref{l.caccioppoli} (ii), we then obtain
\begin{align*}
	\min&\left\{ \left| \boldsymbol{\mu}^- \right|^{q-1}, |k_j|^{q-1} \right\}
	\esssup_{-\theta \tilde\rho_j^2<t<0} \int_{\widetilde{B}_j} \varphi^2 ( u-\tilde{k}_j)_-^2\, \d x 
	+ \iint_{\widetilde{Q}_j} \varphi^2 \left| \nabla (u-\tilde{k}_j)_- \right|^2 \, \d x \d t \\
	&\leq
	\esssup_{-\theta \tilde\rho_j^2<t<0} \int_{\widetilde{B}_j} \varphi^2 \left( |u| + |k_j| \right)^{q-1} ( u-\tilde{k}_j)_-^2\, \d x 
	+ \iint_{\widetilde{Q}_j} \varphi^2 \left| \nabla (u-\tilde{k}_j)_- \right|^2 \, \d x \d t \\
	&\leq
	c \iint_{Q_j} (u-k_j)_-^2 |\nabla \varphi|^2 \, \d x \d t + c \iint_{Q_j} (|u| + |k_j|)^{q-1} (u-k_j)_-^2 |\partial_t \varphi^2| \, \d x \d t \\
	&\leq
	c \frac{2^{2j}}{\rho^2} \left( 1 + \max \left\{ \left| \boldsymbol{\mu}^- \right|^{q-1}, |k_j|^{q-1} \right\} (\nu \theta)^{-1} \right) \iint_{Q_j} (u-k_j)_-^2 \, \d x \d t \\
	&\leq 
	c \frac{2^{2j} \left( \eta \boldsymbol{\omega}\right)^2}{\rho^2} \left( 1 + \max \left\{ \left| \boldsymbol{\mu}^- \right|^{q-1}, |k_j|^{q-1} \right\} \theta^{-1} \right) \left| A_j\right|
\end{align*}
with a constant $c = c(n,q,C_o,C_1,\nu)$.
On the one hand if \eqref{e.mu-_bounds_nearzero} is satisfied, we know that $\frac{1}{2} \boldsymbol{\omega} \leq |\boldsymbol{\mu}^-| \leq \frac{5}{4} \boldsymbol{\omega}$.
Further, we have that $\frac{3}{8} \boldsymbol{\omega} \leq |\boldsymbol{\mu}^- + \frac{1}{8} \boldsymbol{\omega}| \leq |k_j| \leq |\boldsymbol{\mu}^-| \leq \frac{5}{4} \boldsymbol{\omega}$.
Thus, $|\boldsymbol{\mu}^-|^{q-1}$ and $|k_j|^{q-1}$ are comparable to $\theta = \boldsymbol{\omega}^{q-1}$.
On the other hand, if~\eqref{e.mu-_bounds_awayzero} holds true, we have that $\theta \approx | \boldsymbol{\mu}^-|^{q-1}$ and $\frac{1}{2} |\boldsymbol{\mu}^-| \leq |\boldsymbol{\mu}^- + \frac{1}{8} \boldsymbol{\omega}| \leq |k_j| \leq |\boldsymbol{\mu}^-|$.
With these estimates at hand, we infer in particular
$$
\esssup_{-\theta \tilde\rho_j^2<t<0} \int_{\widetilde{B}_j} \varphi^2 ( u-\tilde{k}_j)_-^2\, \d x \leq c \frac{2^{2j} \left( \eta \boldsymbol{\omega}\right)^2}{\theta \rho^2} \left| A_j\right|.
$$
Let $0 \leq \phi \leq 1$ be a cut-off function that equals identity in $Q_{j+1}$ and vanishes outside of $\widetilde Q_j$.
Since $k_j - u \geq k_j - \tilde k_j = 2^{-(j+3)} \eta \boldsymbol{\omega}$ in the set $\{u \leq \tilde k_j\}$, by H\"older's inequality and the Sobolev embedding from Lemma \ref{l.sobolev} we obtain that
\begin{align*}
	\frac{\eta \boldsymbol{\omega}}{2^{j+3}} &\left| A_{j+1} \right|
	\leq
	\iint_{\widetilde{Q}_j}  (u- \tilde k_j )_- \phi \, \d x \d t \\
	&\leq
	c \left( \iint_{\widetilde Q_j} [ (u-\tilde k_j)_- \phi ]^\frac{2(n+2)}{n} \, \d x \d t \right)^\frac{n}{2(n+2)} \left| A_j \right|^{1- \frac{n}{2(n+2)}} \\
	&\leq
	\left( \iint_{\widetilde Q_j} | \nabla [ (u-\tilde k_j)_- \phi ] |^2 \, \d x \d t \right)^\frac{n}{2(n+2)}
	\left( \esssup_{-\theta \tilde \rho_j^2 < t < 0} \int_{\widetilde B_j} (u- \tilde k_j)_-^2 \, \d x \right)^\frac{1}{n+2} \\
	&\phantom{=}
	\cdot \left| A_j \right|^{1- \frac{n}{2(n+2)}} \\
	&\leq
	c \left( \frac{2^{2j} (\eta \boldsymbol{\omega})^2 }{\rho^2} \right)^\frac{n}{2(n+2)} \left( \frac{2^{2j} (\eta \boldsymbol{\omega})^2}{\theta \rho^2} \right)^\frac{1}{n+2} \left| A_j \right|^{1+\frac{1}{n+2}} \\
	&=
	c \frac{2^j \eta \boldsymbol{\omega}}{\theta^\frac{1}{n+2} \rho} \left| A_j \right|^{1+ \frac{1}{n+2}}.
\end{align*}
Dividing by $|Q_{j+1}|$ and denoting $Y_j = |A_j| / |Q_j|$, we conclude that
$$
Y_{j+1} \leq c 2^{2j} Y_{j}^{1+ \frac{1}{n+2}}
$$
for a constant $c = c(n,q,C_o,C_1,\nu)$. Setting $\nu_1 \leq c^{-(n+2)} 4^{-(n+2)^2}$,
we conclude the proof by using Lemma~\ref{l.fgc}.
\end{proof}

At this stage, we state the main result in this section, which allows us to deal with arbitrary $\nu$ in the assumed measure estimate.
In contrast, in the preceding lemma $\nu_1$ is a fixed constant depending only on the data.
\begin{lemma} \label{l.de_giorgi_2_alternative2}
Let $Q_{2\rho,\theta (2\rho)^2}(z_o) \Subset \Omega_T$ be a parabolic cylinder.
Assume that \eqref{e.mu-_bounds_nearzero} or \eqref{e.mu-_bounds_awayzero} holds and that $u$ is a locally bounded, local weak solution to the obstacle problem.
Then for any $\nu \in (0,1)$ there exists a constant $a = a(n,q,C_o,C_1,\nu) \in \big(0,\frac{1}{64}\big]$ such that if
$$
\left| \{ \boldsymbol{\mu}^+ - u \leq \tfrac12 \boldsymbol{\omega} \} \cap Q_{\rho,\theta \rho^2}(z_o)  \right| > \nu \left| Q_{\rho,\theta \rho^2}(z_o) \right|,
$$
then 
$$
u -\boldsymbol{\mu}^- \geq a \boldsymbol{\omega}\quad \text{ a.e. in } Q_{\frac{\rho}{2},\frac12 \nu \theta \left( \frac{\rho}{2} \right)^2}(z_o).
$$
\end{lemma}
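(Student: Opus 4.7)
The hypothesis reads equivalently
\[
  \bigl|\{u - \boldsymbol{\mu}^- \geq \tfrac12 \boldsymbol{\omega}\} \cap Q_{\rho, \theta \rho^2}\bigr| > \nu \bigl|Q_{\rho, \theta \rho^2}\bigr|,
\]
so $u$ is already bounded away from its infimum on a positive-measure portion of the cylinder. The plan is the classical three-step reduction to Lemma~\ref{l.de_giorgi2_scheven2}: (i) find a ``good'' time slice via Fubini; (ii) propagate that slice density in time via the logarithmic estimate Lemma~\ref{l.logarithmic_est}(ii); (iii) shrink the bad space-time measure below the universal threshold $\nu_1$ supplied by Lemma~\ref{l.de_giorgi2_scheven2} using a De Giorgi iteration based on Lemma~\ref{l.caccioppoli}(ii) and Lemma~\ref{l.isoperimetric_ineq}. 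The constant $a$ will emerge as a dyadic fraction of the level parameter produced in step~(iii).

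\textbf{Time slice and logarithmic propagation.} Fubini produces $t^* \in (-\theta \rho^2, -\tfrac12 \nu \theta \rho^2)$ with $|\{u(\cdot, t^*) - \boldsymbol{\mu}^- \geq \tfrac12 \boldsymbol{\omega}\} \cap B_\rho| \geq \tfrac{\nu}{2} |B_\rho|$. I then apply Lemma~\ref{l.logarithmic_est}(ii) on the strip $(t^*, 0)$ with level $k := \boldsymbol{\mu}^- + \tfrac14 \boldsymbol{\omega}$, upper bound $\Gamma := \tfrac14 \boldsymbol{\omega}$, auxiliary parameter $\gamma := 2\eta \boldsymbol{\omega}$ for some $\eta \in (0, 1/16]$ to be fixed, and concentric radii $(1-\sigma)\rho < \rho$. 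The choice $k = \boldsymbol{\mu}^- + \tfrac14 \boldsymbol{\omega}$ (rather than the natural midpoint $\boldsymbol{\mu}^- + \tfrac12 \boldsymbol{\omega}$) is designed precisely so that, under either \eqref{e.mu-_bounds_nearzero} or \eqref{e.mu-_bounds_awayzero}, the level $k$ and every $s \in [\boldsymbol{\mu}^-, k]$ lie at distance $\geq c\boldsymbol{\omega}$ from $0$; consequently $|s|^{q-1} \approx \theta$ on the whole range of integration and
$\int_u^k |s|^{q-1}(\phi^2)'((s-k)_-)\,\d s \approx \theta\, \phi^2((u-k)_-)$. Dividing the log-inequality by $\theta$, the right-hand side is controlled by $c\log^2(1/\eta)(1 - \tfrac\nu 2)|B_\rho| + c \sigma^{-2} \log(1/\eta)|B_\rho|$, since $\phi \leq \log(1/(8\eta))$ pointwise and the initial slice $\{u(\cdot, t^*) < k\} \cap B_\rho$ misses a $\tfrac\nu 2$-portion of $B_\rho$. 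On the other hand, on the bad set $\{u - \boldsymbol{\mu}^- < \eta \boldsymbol{\omega}\}$ one has $(u-k)_- > (\tfrac14 - \eta)\boldsymbol{\omega}$ and hence $\phi \geq \log(1/(12\eta))$, of the same logarithmic order. Fixing first $\sigma = \sigma(n,\nu)$ so small that $(1-\sigma)^{-n}(1 - \tfrac\nu 2) < 1 - \tfrac{3\nu}{8}$, and then $\eta = \eta(n,q,\nu)$ so small that both the log-ratio and the $\sigma^{-2}$-term are negligible, yields the uniform-in-time lower density
\[
  \bigl|\{u(\cdot, t) - \boldsymbol{\mu}^- \geq \eta \boldsymbol{\omega}\} \cap B_{(1-\sigma)\rho}\bigr| \geq \tfrac{\nu}{4} |B_{(1-\sigma)\rho}|, \quad t \in (-\tfrac12 \nu \theta \rho^2, 0).
\]

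\textbf{Measure shrinking and conclusion.} With this uniform slice density at hand I run the standard De Giorgi shrinking iteration: on the dyadic sequence of levels $2^{-j}\eta\boldsymbol{\omega}$, Lemma~\ref{l.caccioppoli}(ii) (whose right-hand side is of the correct intrinsic order thanks to the time-scale $\tfrac12 \nu \theta$) combined slice-wise with Lemma~\ref{l.isoperimetric_ineq} produces a geometric decay of the ``bad'' space-time measure in $j$. For any prescribed $\nu_1 > 0$ this yields an index $j_\ast = j_\ast(n, q, C_o, C_1, \nu, \nu_1)$ such that $|\{u - \boldsymbol{\mu}^- < 2^{-j_\ast}\eta\boldsymbol{\omega}\} \cap Q_{\rho, \frac12 \nu \theta \rho^2}| < \nu_1 |Q_{\rho, \frac12 \nu \theta \rho^2}|$. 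Taking $\nu_1$ as the constant furnished by Lemma~\ref{l.de_giorgi2_scheven2} and applying that lemma with level parameter $2^{-j_\ast}\eta$ in place of $\eta$ then gives $u - \boldsymbol{\mu}^- \geq 2^{-(j_\ast + 1)}\eta\boldsymbol{\omega}$ a.e.~in $Q_{\rho/2, \frac12 \nu \theta (\rho/2)^2}$, so one may take $a := \min\{2^{-(j_\ast+1)}\eta, \tfrac{1}{64}\}$.

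\textbf{Main obstacle.} The chief new difficulty compared with the non-negative setting of~\cite{BLS_holder, Cho_Scheven} is that the weight $|s|^{q-1}$ appearing in the logarithmic estimate either blows up ($q<1$) or vanishes ($q>1$) as $s \to 0$, so the comparability $|s|^{q-1} \approx \theta$ used above would fail outright if the integration range $[u, k]$ straddled $0$. The specific choice $k = \boldsymbol{\mu}^- + \tfrac14 \boldsymbol{\omega}$, together with the explicit one-sided restrictions on $\boldsymbol{\mu}^\pm$ imposed by \eqref{e.mu-_bounds_nearzero}--\eqref{e.mu-_bounds_awayzero}, is precisely the ``technical argument to avoid the set where $u$ becomes degenerate/singular'' flagged in the introduction; no analogous precaution is needed when $\boldsymbol{\mu}^- \geq 0$. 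The remaining bookkeeping -- ordering the choices $\sigma$ before $\eta$ so that the geometric loss $(1-\sigma)^{-n}$ is beaten by the density gain $\tfrac\nu 2$ before the logarithmic corrections are absorbed -- is standard but must be carried out in the right order.
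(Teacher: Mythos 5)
Your overall architecture matches the paper's proof almost exactly: Fubini slice at a good time $t_1$, propagation in time via Lemma~\ref{l.logarithmic_est}(ii), a De Giorgi/isoperimetric shrinking step to drive the bad measure below the threshold $\nu_1$, and a final application of Lemma~\ref{l.de_giorgi2_scheven2}. The ``technical argument'' you identify as the choice of $k$ to avoid a sign change of $s$ on $[\boldsymbol{\mu}^-,k]$ is indeed present, but you have captured only half of what that choice is doing in the paper, and the half you missed is exactly where your argument breaks.

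You fix $k := \boldsymbol{\mu}^- + \tfrac14\boldsymbol{\omega}$, which keeps $k<0$, so the comparability $|s|^{q-1}\approx\theta$ holds on $[\boldsymbol{\mu}^-,k]$ --- but only up to a constant $c_q$ that is \emph{not} close to $1$. Concretely, the log-estimate produces
$\min\{|\boldsymbol{\mu}^-|^{q-1},|k|^{q-1}\}\,\mathrm{I}(t)
\leq \max\{|\boldsymbol{\mu}^-|^{q-1},|k|^{q-1}\}\int_{B_\rho\times\{t_1\}}\phi^2 + \dotsb$,
so after cancellation the initial-slice density gets multiplied by the ratio
$\max\{|\boldsymbol{\mu}^-|^{q-1},|k|^{q-1}\}/\min\{|\boldsymbol{\mu}^-|^{q-1},|k|^{q-1}\}$.
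With $k=\boldsymbol{\mu}^-+\tfrac14\boldsymbol{\omega}$ and, say, hypothesis~\eqref{e.mu-_bounds_nearzero} with $|\boldsymbol{\mu}^-|=\tfrac12\boldsymbol{\omega}$, one has $|k|=\tfrac12|\boldsymbol{\mu}^-|$ and the ratio is $2^{|q-1|}$. Sending $\eta\downarrow 0$ only drives your log-ratio $\log^2(1/(8\eta))/\log^2(1/(12\eta))\to 1$ and kills the $\sigma^{-2}$ term; it does nothing to $2^{|q-1|}$. Thus the propagated density bound is of the form $2^{|q-1|}\bigl(1-\tfrac{\nu}{2}\bigr)+o(1)$, which exceeds $1$ as soon as $\nu$ is small (or $q$ is not close to $1$), and step~(ii) delivers no information. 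This is precisely why the paper takes $k=\boldsymbol{\mu}^-+\delta\boldsymbol{\omega}$ with
$4\delta=\min\bigl\{\tfrac12,\,1-\bigl(\tfrac{1-\nu^2}{1-\tfrac12\nu^2}\bigr)^{1/|q-1|}\bigr\}$,
which forces $(1-4\delta)^{-|q-1|}\leq\tfrac{1-\tfrac12\nu^2}{1-\nu^2}$ so that the ratio of $|s|^{q-1}$-weights is within $1+O(\nu^2)$ of $1$ and can be absorbed into the density gain; only afterwards can $s_o$ be taken large. In short: the level fraction $\delta$ must be a free parameter chosen to depend on $\nu$ (and $q$), not hard-wired to $\tfrac14$. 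With that correction the rest of your argument (the good-slice estimate, the isoperimetric summation with energy Lemma~\ref{l.caccioppoli}(ii), and the final call to Lemma~\ref{l.de_giorgi2_scheven2}) is sound and coincides with the paper.
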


\begin{proof}
In the following, we omit $z_o$ for simplicity.
Observe that from the assumption it follows that 
\begin{equation} \label{e.measure_estimate_q2alt}
\left| \left\{ u - \boldsymbol{\mu}^- < \tfrac12 \boldsymbol{\omega} \right\} \cap Q_{\rho,\theta\rho^2}  \right| < (1-\nu) \left| Q_{\rho,\theta\rho^2} \right|,
\end{equation}
which further implies 
\begin{equation} \label{e.measure_estimate_b2alt}
\left| \left\{ u (\cdot, t_1) - \boldsymbol{\mu}^- < \tfrac12 \boldsymbol{\omega} \right\}\cap B_\rho \right| \leq \frac{1-\nu}{1-\frac12 \nu} \left| B_\rho \right|
\end{equation}
for some $t_1 \in [- \theta \rho^2, - \frac12 \nu \theta \rho^2]$. If this did not hold, we would have
\begin{align*}
	\left| \{ u - \boldsymbol{\mu}^- < \tfrac12 \boldsymbol{\omega} \} \cap Q_{\rho,\theta\rho^2} \right|
	&\geq
	\int_{-\theta \rho^2}^{- \frac12 \nu \theta \rho^2}
	\left| \{ u(\cdot,t) - \boldsymbol{\mu}^- < \tfrac12 \boldsymbol{\omega} \} \cap B_\rho \right| \,  \d t \\
&\geq (1- \tfrac12 \nu) \theta \rho^2 \frac{1-\nu}{1- \tfrac12 \nu} \left| B_\rho \right|\\
&= (1- \nu) \left| Q_{\rho, \theta \rho^2} \right|,
\end{align*}
which contradicts~\eqref{e.measure_estimate_q2alt}. Now we want to propagate the measure information in~\eqref{e.measure_estimate_b2alt} to the whole interval $(t_1,0)$ by using the logarithmic estimate.
Let us define the level $k$ by
$$
k:= \boldsymbol{\mu}^- + \delta \boldsymbol{\omega},
$$
where $\delta \in \big(0,\frac{1}{8} \big]$. This implies 
\begin{equation*}
u < 0 \quad \text{ in } \{ u < k \}
\end{equation*}
by the bound~\eqref{e.mu-_bounds_nearzero}$_1$ or~\eqref{e.mu-_bounds_awayzero}$_1$ depending on the case. Let us choose $s_o \in \N_{\geq 5}$ large enough such that 
$$
s_o > 1 - \tfrac{\log \delta}{\log 2} \geq 4,
$$
which implies that
$$
2^{1-s} < \delta \leq \tfrac{1}{8}
$$
for every $s \geq s_o$.
Let us also suppose that 
\begin{equation} \label{a.inf_u}
\essinf_{B_\rho\times (t_1,0)} u \leq \boldsymbol{\mu}^- + \tfrac{\delta}{2} \boldsymbol{\omega}
\end{equation}
and define
$$
H := \esssup_{B_\rho \times (t_1,0)} (u-k)_-.
$$
Now it follows that $ \tfrac{\delta}{2} \boldsymbol{\omega} \leq H \leq \delta \boldsymbol{\omega} $, which implies
\begin{equation} \label{e.H_bounds}
\tfrac{1}{2^s} \boldsymbol{\omega} \leq H \leq \tfrac{1}{8} \boldsymbol{\omega}\quad \text{ for any } s \geq s_o.
\end{equation}
Further, we define the function 
\begin{equation*} 
\phi (v) := \left[ \log\left( \frac{H}{H + \tfrac{1}{2^s}\boldsymbol{\omega} - v} \right) \right]_+
\end{equation*}
for $v < H + \tfrac{1}{2^s} \boldsymbol{\omega}$.
Now, we rewrite the integrals in Lemma~\ref{l.logarithmic_est} (ii) as
$$
\int_u^k |s|^{q-1} \left( \phi^2 \right)' ((s-k)_-) \, \d s = \int_0^{(u-k)_-} | k - \tau |^{q-1} \left(\phi^2\right)' (\tau) \, \d \tau
$$
and take into account that $|k| \leq |k - \tau| \leq |u|$.
Thus, we deduce that
\begin{align*}
	\min  \big\{ |\boldsymbol{\mu}^-|^{q-1},|k|^{q-1} \big\} \mathrm{I}(t)
	&:=
	\min \big\{ |\boldsymbol{\mu}^-|^{q-1},|k|^{q-1} \big\} \int_{B_{\sigma \rho} \times \{t\}} \phi^2((u - k)_-) \, \d x \\
	&\leq
	\max \big\{ |\boldsymbol{\mu}^-|^{q-1},|k|^{q-1} \big\} \int_{B_{\rho} \times \{t_1\}} \phi^2((u-k)_-) \, \d x \\
	&\phantom{=}
	+ \frac{c}{(1 - \sigma)^2 \rho^2} \iint_{B_\rho \times (t_1,0)} \phi\left( (u-k)_- \right) \, \d x \d t
\end{align*}
for any $t \in (t_1,0)$ and $\sigma \in (0,1)$.
Observe that in both cases \eqref{e.mu-_bounds_nearzero} and \eqref{e.mu-_bounds_awayzero} we may estimate
$\frac{1}{2} |\boldsymbol{\mu}^-| \leq (1 - 4\delta) |\boldsymbol{\mu}^-|
\leq |\boldsymbol{\mu}^- + \delta \boldsymbol{\omega}| = |k| \leq |\boldsymbol{\mu}^-|$.
In addition, if \eqref{e.mu-_bounds_nearzero} holds, we have that $\frac{1}{2} \boldsymbol{\omega} \leq |\boldsymbol{\mu}^-| \leq \frac{5}{4} \boldsymbol{\omega}$ and $\theta = \boldsymbol{\omega}^{q-1}$.
Recall that $\theta \approx |\boldsymbol{\mu}^-|^{q-1}$ in the case \eqref{e.mu-_bounds_awayzero}.
Since $\phi$ is increasing and by~\eqref{e.H_bounds} we also find that
$$
\phi((u-k)_-) \leq \log \left( \frac{2^s H}{\boldsymbol{\omega}} \right) \leq \log \left( 2^{s-3} \right),
$$
which implies
\begin{align*}
	\mathrm{I} (t)
	&\leq
	(1 - 4\delta)^{-|q-1|} \left( \log \left( 2^{s-3} \right) \right)^2 \left| \{ u(\cdot,t_1) < k \} \cap B_\rho \right| + \frac{c \log \left( 2^{s-3} \right)}{\theta(1-\sigma)^2 \rho^2} \left| B_\rho \times (t_1,0) \right| \\
	&\leq
	\left[ (1 - 4\delta)^{-|q-1|} \left( \log \left( 2^{s-3}  \right) \right)^2 \frac{1- \nu}{1 - \tfrac12 \nu} + \frac{c\,\log \left( 2^{s-3} \right)}{(1-\sigma)^2} \right] | B_\rho |
\end{align*}
for any $t\in (t_1,0)$, where we used also the fact that $t_1 \geq -\theta \rho^2$.
On the left hand side, let us consider the set $B_{\sigma \rho} \cap \{ u(\cdot,t) \leq \boldsymbol{\mu}^- + \tfrac{1}{2^s}\boldsymbol{\omega} \}$ for $t \in (t_1,0)$, where
$$
(u-k)_- \geq \boldsymbol{\mu}^- + \delta \boldsymbol{\omega} - \boldsymbol{\mu}^- - \tfrac{1}{2^s} \boldsymbol{\omega} = \left( \delta - \tfrac{1}{2^s} \right) \boldsymbol{\omega}.
$$
Since the function $\phi( (u-k)_- )$ is decreasing in $H$ and $H \leq \delta \boldsymbol{\omega}$, this implies 
\begin{align*}
\phi ( (u-k)_- ) &\geq \left[ \log \left( \frac{\delta \boldsymbol{\omega}}{\delta \boldsymbol{\omega} + \frac{1}{2^s}\boldsymbol{\omega} - (\delta - \frac{1}{2^s})\boldsymbol{\omega}} \right) \right]_+ = \log \left( 2^{s-1} \delta \right).
\end{align*}
Therefore, we find that
\begin{equation*}
\mathrm{I}(t) \geq \left( \log (2^{s-1} \delta) \right)^2 \left| \left\{ u(\cdot,t) - \boldsymbol{\mu}^- \leq \tfrac{1}{2^s} \boldsymbol{\omega} \right\} \cap B_{\sigma\rho}  \right|. 
\end{equation*}
By combining the preceding estimates, we obtain that
\begin{align*}
	\big| \big\{ &u(\cdot,t) - \boldsymbol{\mu}^- \leq \tfrac{1}{2^s} \boldsymbol{\omega} \big\} \cap B_{\sigma \rho} \big| \\
	&\leq
	\frac{1}{\left( \log (2^{s-1} \delta) \right)^2}
	\left[ (1 - 4\delta)^{-|q-1|}\left( \log \left( 2^{s-3} \right) \right)^2 \frac{1- \nu}{1 - \tfrac12 \nu}
	+ \frac{c\, \log \left( 2^{s-3} \right)}{(1-\sigma)^2} \right] \left| B_\rho \right|. 
\end{align*}
Using $\left| B_\rho \setminus B_{\sigma \rho} \right| \leq n (1-\sigma) |B_\rho|$, this yields
\begin{align*}
	&\left|\left\{ u(\cdot,t) - \boldsymbol{\mu}^- \leq \tfrac{1}{2^s} \boldsymbol{\omega} \right\} \cap B_{\rho} \right| \\
	&\leq
	\bigg[ (1-4\delta)^{-|q-1|}
	\bigg(\frac{ \log ( 2^{s-3})}{\log (2^{s-1} \delta)} \bigg)^2 
	\frac{1- \nu}{1 - \tfrac12 \nu}
	+ \frac{c \log( 2^{s-3})}{(1-\sigma)^2 \left( \log (2^{s-1}\delta) \right)^2}
	+ n(1-\sigma) \bigg] \\
	&\phantom{=}\;
	\cdot |B_\rho|. 
\end{align*}
Let us fix
$$
\sigma := 1 - \frac{\nu^2}{8n} \in (0,1),
$$
and $\delta$ such that 
$$
4 \delta = \min \left\{ \frac{1}{2}, 1- \left( \frac{1-\nu^2}{1- \frac12 \nu^2} \right)^\frac{1}{|q-1|} \right\}.
$$
This implies that
$$
\left( 1 - 4 \delta \right)^{- |q-1|} \leq \frac{1- \frac12 \nu^2}{1 - \nu^2}
$$
and we obtain that
\begin{align*}
	&\left| \left\{ u(\cdot,t) - \boldsymbol{\mu}^- \leq \tfrac{1}{2^s} \boldsymbol{\omega} \right\} \cap B_\rho \right| \\
	&\hspace{10mm}\leq \left[ \left( \frac{\log (2^{s-3})}{\log (2^{s-1} \delta)} \right)^2 \frac{1- \tfrac12 \nu^2}{(1+\nu)(1-\tfrac12 \nu)} + c \frac{n^2 \log (2^{s-3})}{\nu^4 \left( \log(2^{s-1}\delta)\right)^2} + \frac{\nu^2}{8} \right] \left| B_\rho \right|
\end{align*}
for all $t \in (t_1,0)$.
Let $s_o \in \N_{\geq 5}$ depending on $n$, $q$, $C_o$, $C_1$ and $\nu$ be so large that 
$$
\left( \frac{\log (2^{s_o-3})}{\log (2^{s_o-1} \delta)} \right)^2 \leq (1+\nu)(1 - \tfrac12\nu).
$$
and 
$$
c \frac{n^2 \log (2^{s_o-3} )}{\left( \log(2^{s_o-1}\delta)\right)^2} \leq \frac{\nu^6}{8}.
$$

Now we conclude that 
\begin{equation*}
\left| \left\{ u(\cdot,t) - \boldsymbol{\mu}^- \leq \tfrac{1}{2^s} \boldsymbol{\omega} \right\} \cap B_\rho \right| \leq \left( 1- \tfrac14 \nu^2 \right) \left| B_\rho \right|,
\end{equation*}
or equivalently
\begin{equation*}
\left| \left\{ u(\cdot,t) - \boldsymbol{\mu}^- > \tfrac{1}{2^s} \boldsymbol{\omega} \right\} \cap B_\rho \right| \geq \tfrac14 \nu^2 \left| B_\rho \right|
\end{equation*}
for all $t \in (t_1,0)$ and $s \geq s_o$.
Observe that we assumed~\eqref{a.inf_u} to obtain the preceding estimate.
However, if~\eqref{a.inf_u} is false, we have that
$$
\left| \{ u(\cdot,t) - \boldsymbol{\mu}^- \leq \tfrac{\delta}{2}\boldsymbol{\omega} \} \cap B_\rho \right| = 0
$$
for all $t \in (t_1,0)$, which implies the previous inequalities since $\tfrac{\delta}{2} > \tfrac{1}{2^s}$ for $s \geq s_o$.

Up next, let $\nu_1 \in (0,1)$ be the parameter from Lemma~\ref{l.de_giorgi2_scheven2}. We show that there exists $s_1 \in \N_{\geq 2}$ such that 
\begin{equation} \label{e.measure_est_s0s1_2}
\left| \left\{ u - \boldsymbol{\mu}^- < \tfrac{1}{2^{s_o + s_1}} \boldsymbol{\omega} \right\} \cap Q_{\rho, \frac12 \nu \theta \rho^2} \right| < \nu_1 \left| Q_{\rho, \frac12 \nu \theta \rho^2} \right|.
\end{equation}
To this end, we define cylinders $Q_2= B_\rho \times \left(- \frac12 \nu \theta \rho^2, 0 \right]$ and $Q_1 = B_\rho \times \left(- \nu \theta \rho^2,0\right]$, which implies that $Q_2 \subset Q_1 \subset Q_{2\rho,\theta (2\rho)^2}$.
Further, we consider levels
$$
k_j := \boldsymbol{\mu}^- + \tfrac{1}{2^j} \boldsymbol{\omega}
$$
and set
$$
A_j := \{ u < k_j \} \cap Q_2
$$
for $j \in \N_{\geq s_o}$. By De Giorgi's isoperimetric inequality from Lemma~\ref{l.isoperimetric_ineq}, we have that
\begin{align*}
	( k_j - k_{j+1} ) &| \{ u(\cdot,t) < k_{j+1} \} \cap B_\rho | \\
	&\leq
	\frac{c(n) \rho^{n+1}}{| \{ u(\cdot,t) > k_j \} \cap B_\rho |}
	\int_{B_\rho \cap \{ k_{j+1} < u(\cdot,t) < k_j \}} | \nabla u | \, \d x  \\
	&\leq
	\frac{c(n) \rho}{\nu^2} \int_{B_\rho \cap \{ k_{j+1} < u(\cdot,t) < k_j \}} | \nabla u | \, \d x.
\end{align*}
Integrating over $(- \tfrac12 \nu \theta \rho^2, 0)$, we find that
\begin{align*}
( k_j - k_{j+1} ) |A_{j+1}| &\leq \frac{c(n) \rho}{\nu^2} \iint_{A_j \setminus A_{j+1}} \left| \nabla u \right| \, \d x \d t \\
&\leq \frac{c(n) \rho}{\nu^2} \left| A_j \setminus A_{j+1} \right|^\frac12 \left( \iint_{A_j \setminus A_{j+1}} |\nabla u|^2 \, \d x \right)^\frac12 \\
&\leq \frac{c(n) \rho}{\nu^2} \left| A_j \setminus A_{j+1} \right|^\frac12 \left( \iint_{Q_2} \left| \nabla (u-k_j)_- \right|^2 \, \d x \d t \right)^\frac12.
\end{align*}
Applying Lemma~\ref{l.caccioppoli} (ii), we estimate the integral on the right-hand side by
\begin{align*}
\iint_{Q_2} | \nabla (u-k_j)_-|^2 \, \dx \d t &\leq c \left( \frac{1}{\rho^2} + \frac{\max \{|k_j|^{q-1}, |\boldsymbol{\mu}^-|^{q-1}\}}{\nu \theta \rho^2} \right) \iint_{Q_1} (u-k_j)_-^2 \, \d x \d t \\
&\leq \frac{c}{\nu \rho^2} \left( \frac{\boldsymbol{\omega}}{2^j} \right)^2 |Q_1|.
\end{align*}
In the last line, we used $|k_j| \geq \tfrac14 \boldsymbol{\omega}$ when $0<q<1$ and $|\boldsymbol{\mu}^-| \leq \tfrac54 \boldsymbol{\omega}$ when $q > 1$ in case~\eqref{e.mu-_bounds_nearzero}, whereas we have that $|k_j| \geq \tfrac12 |\boldsymbol{\mu}^-|$ in case~\eqref{e.mu-_bounds_awayzero} when $0<q<1$.
Combining the two estimates above and using $k_j - k_{j+1} = 2^{-(j+1)} \boldsymbol{\omega}$, we infer
$$
|A_{j+1}|^2 \leq \frac{c}{\nu^5} |A_j \setminus A_{j+1}| |Q_1|.
$$
At this stage, we sum over $j = s_o, \ldots, s_o + s_1 -1$ for some $s_1 \in \N_{\geq 2}$, which gives us
$$
s_1 |A_{s_o+s_1}|^2 \leq \frac{c}{\nu^5} | Q_1 |^2 = \frac{c}{\nu^5} |Q_2|^2.
$$
Choosing $s_1$ large enough, the estimate~\eqref{e.measure_est_s0s1_2} holds true.
Hence, an application of Lemma~\ref{l.de_giorgi2_scheven2} with $\eta = 2^{-(s_o+s_1)}$ yields 
$$
u \geq \boldsymbol{\mu}^- + \tfrac{1}{2^{s_o+s_1+1}} \boldsymbol{\omega}\quad \text{ a.e. in } Q_{\frac{\rho}{2}, \frac12 \nu \theta \left( \frac{\rho}{2}\right)^2}.
$$
By denoting $a = a(n,q,C_o, C_1,\nu) = \frac{1}{2^{s_o+s_1+1}}$ the proof is completed.
\end{proof}

\subsection{Second alternative near supremum}
Here we will assume that either
\begin{equation} \label{e.mu+_bounds_nearzero} 
-\tfrac12 \boldsymbol{\omega} \leq \boldsymbol{\mu}^- \leq \tfrac14 \boldsymbol{\omega} \quad \text{ and }\quad \theta = \boldsymbol{\omega}^{q-1}
\end{equation}
holds true, or 
\begin{equation} \label{e.mu+_bounds_awayzero} 
\boldsymbol{\mu}^- > \tfrac14 \boldsymbol{\omega} \quad \text{ and }\quad \tfrac12 \boldsymbol{\mu}^+ \leq \theta^\frac{1}{q-1} \leq 5 \boldsymbol{\mu}^+.
\end{equation}
Note that \eqref{e.mu+_bounds_nearzero}$_1$ is equivalent to
\begin{equation*} 
\tfrac12 \boldsymbol{\omega} \leq \boldsymbol{\mu}^+ \leq \tfrac54 \boldsymbol{\omega}.
\end{equation*}

\begin{lemma} \label{l.de_giorgi_2_alternative}
Let $Q_{2\rho,\theta (2\rho)^2}(z_o) \Subset \Omega_T$ be a parabolic cylinder such that 
$$
\sup_{Q_{\rho, \theta \rho^2}(z_o)} \psi \leq \tfrac12 ( \boldsymbol{\mu}^+ + \boldsymbol{\mu}^- )
$$
and assume that hypothesis \eqref{e.mu+_bounds_nearzero} or \eqref{e.mu+_bounds_awayzero} holds.
For any $\nu \in (0,1)$ there exists constant $a = a(n,q,C_o,C_1,\nu) \in \big(0,\frac{1}{64}\big]$ such that if
$$
\left| \{ u - \boldsymbol{\mu}^- \leq \tfrac12 \boldsymbol{\omega} \} \cap Q_{\rho,\theta \rho^2}(z_o) \right| > \nu \left| Q_{\rho,\theta \rho^2}(z_o) \right|,
$$
then 
$$
\boldsymbol{\mu}^+ - u \geq a \boldsymbol{\omega}\quad \text{ a.e. in } Q_{\frac{\rho}{2},\frac12 \nu \theta \left( \frac{\rho}{2} \right)^2}(z_o).
$$
\end{lemma}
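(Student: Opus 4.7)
The proof will mirror that of Lemma~\ref{l.de_giorgi_2_alternative2}, with the roles of the truncations $(u-k)_-$ and $(u-k)_+$ interchanged and levels chosen near $\boldsymbol{\mu}^+$ instead of near $\boldsymbol{\mu}^-$. The principal new subtlety is that the energy estimate Lemma~\ref{l.caccioppoli}~(i) and the logarithmic estimate Lemma~\ref{l.logarithmic_est}~(i) both require the truncation level $k$ to satisfy $k \geq \sup_{Q_{\rho,\theta\rho^2}(z_o)} \psi$. This is precisely why Lemma~\ref{l.de_giorgi_2_alternative} carries the additional hypothesis $\sup\psi \leq \tfrac12(\boldsymbol{\mu}^+ + \boldsymbol{\mu}^-)$ (which was unnecessary in Lemma~\ref{l.de_giorgi_2_alternative2}): it guarantees that every $k = \boldsymbol{\mu}^+ - \delta \boldsymbol{\omega}$ with $\delta \leq \tfrac12$ is admissible.

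First I would establish the analogue of Lemma~\ref{l.de_giorgi2_scheven2}: for any $\eta \in (0,\tfrac18]$ there is $\nu_1 = \nu_1(n,q,C_o,C_1,\nu) \in (0,1)$ such that
\begin{equation*}
\big| \{ \boldsymbol{\mu}^+ - u < \eta \boldsymbol{\omega} \} \cap Q_{\rho,\frac12 \nu\theta \rho^2} \big| < \nu_1 \big|Q_{\rho,\frac12 \nu\theta \rho^2}\big|
\end{equation*}
implies $\boldsymbol{\mu}^+ - u \geq \tfrac12 \eta \boldsymbol{\omega}$ a.e.~in $Q_{\rho/2,\frac12 \nu \theta (\rho/2)^2}$. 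This is a De Giorgi iteration with increasing levels $k_j = \boldsymbol{\mu}^+ - (\tfrac{\eta}{2} + \tfrac{\eta}{2^{j+1}})\boldsymbol{\omega}$; all of them satisfy $k_j \geq \boldsymbol{\mu}^+ - \tfrac18 \boldsymbol{\omega} \geq \tfrac12(\boldsymbol{\mu}^+ + \boldsymbol{\mu}^-) \geq \sup\psi$, so Lemma~\ref{l.caccioppoli}~(i) applies. In both regimes~\eqref{e.mu+_bounds_nearzero} and~\eqref{e.mu+_bounds_awayzero} one has $|u|+|k_j| \approx \theta^{1/(q-1)}$ on $\{u > k_j\}$, so the weight in $\mathfrak{g}_+$ balances the intrinsic scaling of the cylinder, and a standard combination of Lemma~\ref{l.sobolev} with Lemma~\ref{l.fgc} yields the conclusion verbatim as in the proof of Lemma~\ref{l.de_giorgi2_scheven2}.

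With this auxiliary lemma in hand, the main argument proceeds in three steps exactly parallel to the proof of Lemma~\ref{l.de_giorgi_2_alternative2}. A Fubini-type selection converts the hypothesis into the existence of $t_1 \in [-\theta\rho^2,-\tfrac12 \nu \theta \rho^2]$ with $\big|\{\boldsymbol{\mu}^+ - u(\cdot,t_1) < \tfrac12 \boldsymbol{\omega}\} \cap B_\rho\big| \leq \tfrac{1-\nu}{1-\nu/2}|B_\rho|$. Then, choosing $k := \boldsymbol{\mu}^+ - \delta \boldsymbol{\omega}$ for a suitable $\delta = \delta(q,\nu) \in (0,\tfrac18]$, setting $H := \esssup_{B_\rho\times(t_1,0)}(u-k)_+$, and using $\phi(v) := [\log(H/(H+2^{-s}\boldsymbol{\omega}-v))]_+$, I would apply the logarithmic estimate Lemma~\ref{l.logarithmic_est}~(i) (admissible since $k \geq \sup\psi$) to propagate the slice bound at $t_1$ to every $t \in (t_1,0)$, obtaining
\begin{equation*}
\big|\{\boldsymbol{\mu}^+ - u(\cdot,t) < 2^{-s} \boldsymbol{\omega}\} \cap B_\rho\big| \leq \big(1 - \tfrac14 \nu^2\big) |B_\rho|
\end{equation*}
for every such $t$ and every $s \geq s_o$, with $\sigma = 1 - \tfrac{\nu^2}{8n}$ and $s_o \in \N_{\geq 5}$ chosen precisely as in the proof of Lemma~\ref{l.de_giorgi_2_alternative2}. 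Finally, De Giorgi's isoperimetric inequality Lemma~\ref{l.isoperimetric_ineq} applied slicewise on the levels $k_j = \boldsymbol{\mu}^+ - 2^{-j}\boldsymbol{\omega}$ (for which the energy estimate Lemma~\ref{l.caccioppoli}~(i) is again admissible), together with a telescopic sum for $j = s_o,\dots,s_o+s_1-1$, yields the measure bound needed by the auxiliary lemma with $\eta = 2^{-(s_o+s_1)}$, once $s_1$ is chosen sufficiently large. The conclusion follows with $a = 2^{-(s_o+s_1+1)}$.

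The main technical obstacle, as in the infimum case, is to verify that the comparability $\max\{|\boldsymbol{\mu}^+|^{q-1}, |k_j|^{q-1}\} \approx \theta$ (needed both in the De Giorgi iteration to control the parabolic correction term $|u|^{q-1}|\partial_t\varphi^2|$, and in the logarithmic estimate to produce the factor $1/(\theta(1-\sigma)^2\rho^2)$) holds uniformly in both regimes~\eqref{e.mu+_bounds_nearzero} and~\eqref{e.mu+_bounds_awayzero}, and for both $0<q<1$ and $q>1$. In~\eqref{e.mu+_bounds_nearzero} this follows from $\tfrac12\boldsymbol{\omega}\leq\boldsymbol{\mu}^+\leq\tfrac54\boldsymbol{\omega}$ combined with $\theta = \boldsymbol{\omega}^{q-1}$; in~\eqref{e.mu+_bounds_awayzero} the intrinsic scaling $\tfrac12 \boldsymbol{\mu}^+ \leq \theta^{1/(q-1)} \leq 5 \boldsymbol{\mu}^+$ is tailored to deliver it directly, with the required lower bound on $|k_j|$ being $|k_j| \geq \tfrac12\boldsymbol{\mu}^+$ thanks to $\delta \leq \tfrac18$ and $\boldsymbol{\mu}^- > \tfrac14\boldsymbol{\omega}$.
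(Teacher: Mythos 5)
Your proposal is correct and takes exactly the route the paper intends: Lemma~\ref{l.de_giorgi_2_alternative} is stated without proof in Section~\ref{s.2alternative} precisely because it is the mirror image of Lemma~\ref{l.de_giorgi_2_alternative2}, with $(u-k)_-$ replaced by $(u-k)_+$ and levels near $\boldsymbol{\mu}^+$ instead of $\boldsymbol{\mu}^-$. You have correctly identified the two genuine modifications that make the mirroring non-trivial: first, that the truncation-from-above estimates (Lemma~\ref{l.caccioppoli}~(i) and Lemma~\ref{l.logarithmic_est}~(i)) are only admissible for $k \geq \sup\psi$, which the additional hypothesis $\sup\psi \leq \tfrac12(\boldsymbol{\mu}^+ + \boldsymbol{\mu}^-)$ secures for every level used (since all levels are $\geq \boldsymbol{\mu}^+ - \tfrac18\boldsymbol{\omega} \geq \tfrac12(\boldsymbol{\mu}^+ + \boldsymbol{\mu}^-)$); and second, that the comparability $\max\{|\boldsymbol{\mu}^+|^{q-1},|k_j|^{q-1}\} \approx \theta$ must be verified afresh in both regimes~\eqref{e.mu+_bounds_nearzero} and~\eqref{e.mu+_bounds_awayzero}, which your bound $|k_j| \geq \tfrac12 \boldsymbol{\mu}^+$ (coming from $\delta \leq \tfrac18$ and $\boldsymbol{\omega} \leq 4\boldsymbol{\mu}^-$) does. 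You also implicitly handle the needed sign information $0 < k_j < u$ on $\{u > k_j\}$, which replaces the $\boldsymbol{\mu}^- \leq u < k_j < 0$ used in the infimum case. Your three-step structure (Fubini time-slice selection, logarithmic propagation with $\sigma = 1 - \nu^2/(8n)$, De Giorgi isoperimetric shrinking followed by the auxiliary iteration lemma) matches the proof of Lemma~\ref{l.de_giorgi_2_alternative2} line for line.
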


\section{Reduction of oscillation} \label{s.red-osc}
Throughout the rest of the paper, we denote the minimum of the parameters $\nu$ from Lemmas~\ref{l.de_giorgi_type+++} to~\ref{l.de_giorgi_awayzero-} by $\nu_o$.
Further, we let $a$ be the minimum of the respective parameters in Lemmas~\ref{l.de_giorgi_2_alternative2} and~\ref{l.de_giorgi_2_alternative} corresponding to the parameter $\nu_o$ chosen above, and define $\delta = 1-a \in (0,1)$.
This implies that these parameters coincide in the following lemmas, which allows us to use them subsequently in Section~\ref{s.final_argument}.

Moreover, throughout this section, we consider parabolic cylinders of the form $Q_o := Q_{\rho_o, \theta \rho_o^2}(z_o)$ and quantities
\begin{equation}
	\boldsymbol{\mu}^+_o \geq \esssup_{Q_o} u,
	\quad
	\boldsymbol{\mu}^-_o \leq \essinf_{Q_o} u
	\quad \text{ and }\quad
	\boldsymbol{\omega}_o = \boldsymbol{\mu}^+_o - \boldsymbol{\mu}^-_o.
	\label{eq:assumptions_quantities}
\end{equation}
Further, we assume that
\begin{equation}
	\sup_{Q_o} \psi \leq \tfrac12 (\boldsymbol{\mu}^+_o + \boldsymbol{\mu}^-_o)
	\quad \text{and} \quad
	\osc_{Q_o} \psi \leq \tfrac12 \boldsymbol{\omega}_o.
	\label{eq:assumptions_obstacle}
\end{equation}
We treat the following cases separately: Either we assume that
\begin{equation}
	\boldsymbol{\mu}^+_o \geq - \tfrac14 \boldsymbol{\omega}_o
	\quad \text{and}\quad
	\boldsymbol{\mu}^-_o \leq \tfrac14 \boldsymbol{\omega}_o
	\quad \text{and}\quad
	\theta = \boldsymbol{\omega}_o^{q-1},
	\label{eq:near_zero_reduction}
\end{equation}
or
\begin{equation}
	\boldsymbol{\mu}^-_o > \tfrac14 \boldsymbol{\omega}_o
	\quad \text{and}\quad
	\theta = (\boldsymbol{\mu}^+_o)^{q-1},
	\label{eq:above_zero_reduction}
\end{equation}
or
\begin{equation}
	\boldsymbol{\mu}^+_o < - \tfrac14 \boldsymbol{\omega}_o
	\quad \text{and}\quad
	\theta = \left| \boldsymbol{\mu}^-_o \right|^{q-1}.
	\label{eq:below_zero_reduction}
\end{equation}

First, we are concerned with the case where $u$ is near zero.

\begin{lemma} \label{l.red_oscillation_nearzero}
Assume that the hypotheses \eqref{eq:assumptions_quantities}, \eqref{eq:assumptions_obstacle} and \eqref{eq:near_zero_reduction} are satisfied.  
Define
$$
	\boldsymbol{\omega}_1 :=
	\max \Big\{ \delta \boldsymbol{\omega}_o, 2 \osc_{Q_o} \psi  \Big\}
$$
and
$$
	Q_1 := Q_{\rho_1,\theta_1 \rho_1^2}(z_o)
	\quad \text{ with }\quad
	\theta_1 = \boldsymbol{\omega}_1^{q-1},
	\ \rho_1 = \lambda \rho_o,\
	\lambda:= \sqrt{\frac{\nu_o}{8} \delta^{(1-q)_+}}.
$$
Then we have that
$$
\essosc_{Q_1} u \leq \boldsymbol{\omega}_1\quad \text{ and }\quad Q_1 \subset Q_o.
$$
\end{lemma}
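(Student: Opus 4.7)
The plan is to apply the first/second alternative dichotomy of Section~\ref{s.1alternative} on $Q_o$, combine it with Lemma~\ref{l.de_giorgi_2_alternative} or~\ref{l.de_giorgi_2_alternative2}, and then verify the geometric inclusion $Q_1 \subset Q_o$ together with the oscillation bound. First I would record that in the near-zero regime \eqref{eq:near_zero_reduction} the inequalities $\boldsymbol{\mu}_o^+ \geq -\tfrac14 \boldsymbol{\omega}_o$ and $\boldsymbol{\mu}_o^- = \boldsymbol{\mu}_o^+ - \boldsymbol{\omega}_o \leq \tfrac14 \boldsymbol{\omega}_o$ force $|\boldsymbol{\mu}_o^\pm| \leq \tfrac54 \boldsymbol{\omega}_o$, so that the exceptional alternatives $|\boldsymbol{\mu}^\pm| > 2\boldsymbol{\omega}$ in Lemmas~\ref{l.de_giorgi_type+++} and~\ref{l.de_giorgi_type---} are automatically excluded. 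To fit the hypotheses of the second-alternative lemmas I split further: if $\boldsymbol{\mu}_o^+ \geq \tfrac12 \boldsymbol{\omega}_o$, then $\boldsymbol{\mu}_o^- \in [-\tfrac12 \boldsymbol{\omega}_o, \tfrac14 \boldsymbol{\omega}_o]$, matching \eqref{e.mu+_bounds_nearzero}, and I work near the supremum; otherwise $\boldsymbol{\mu}_o^+ \in [-\tfrac14 \boldsymbol{\omega}_o, \tfrac12 \boldsymbol{\omega}_o]$, matching \eqref{e.mu-_bounds_nearzero}, and I work near the infimum.

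In the near-supremum sub-case I test $|\{\boldsymbol{\mu}_o^+ - u \leq \tfrac12 \boldsymbol{\omega}_o\} \cap Q_o|$ against $\nu_o |Q_o|$. If it is small, Lemma~\ref{l.de_giorgi_type+++} (whose level admissibility follows from $\sup_{Q_o}\psi \leq \tfrac12(\boldsymbol{\mu}_o^+ + \boldsymbol{\mu}_o^-)$ supplied by \eqref{eq:assumptions_obstacle}) yields $\boldsymbol{\mu}_o^+ - u \geq \tfrac14 \boldsymbol{\omega}_o$ a.e.\ on $Q_{\rho_o/2, \theta(\rho_o/2)^2}$ and hence $\essosc u \leq \tfrac34 \boldsymbol{\omega}_o$; if it is large, Lemma~\ref{l.de_giorgi_2_alternative} with $\nu = \nu_o$ yields $\boldsymbol{\mu}_o^+ - u \geq a \boldsymbol{\omega}_o$ a.e.\ on $Q^\star := Q_{\rho_o/2, \tfrac12 \nu_o \theta (\rho_o/2)^2}$, hence $\essosc u \leq (1-a)\boldsymbol{\omega}_o = \delta \boldsymbol{\omega}_o$. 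The near-infimum sub-case is entirely symmetric via Lemmas~\ref{l.de_giorgi_type---} and~\ref{l.de_giorgi_2_alternative2}; no obstacle compatibility is required there since every level is admissible in the energy estimate from below. Because $a \leq \tfrac{1}{64}$ gives $\delta \geq \tfrac{63}{64} > \tfrac34$, every branch yields $\essosc_{Q^\star} u \leq \delta \boldsymbol{\omega}_o$.

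It remains to check the inclusion $Q_1 \subset Q^\star \subset Q_o$ for the prescribed $\lambda = \sqrt{\nu_o \delta^{(1-q)_+}/8}$. Spatially, $\lambda \leq \sqrt{\nu_o/8} < 1/2$ yields $\rho_1 \leq \rho_o/2$. For the time scales I compare $\theta_1 \rho_1^2 = \boldsymbol{\omega}_1^{q-1} \lambda^2 \rho_o^2$ with $\tfrac12 \nu_o \theta (\rho_o/2)^2 = \tfrac{\nu_o}{8} \boldsymbol{\omega}_o^{q-1} \rho_o^2$, which after plugging in $\lambda^2$ reduces to $\delta^{(1-q)_+} \leq (\boldsymbol{\omega}_o/\boldsymbol{\omega}_1)^{q-1}$. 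In the degenerate case $q \geq 1$ the left-hand side equals $1$ and the right-hand side is at least $1$ because $\boldsymbol{\omega}_1 \leq \boldsymbol{\omega}_o$ (using $2\osc_{Q_o}\psi \leq \boldsymbol{\omega}_o$ from \eqref{eq:assumptions_obstacle}). In the singular case $q < 1$ the inequality $\delta^{1-q} \leq (\boldsymbol{\omega}_1/\boldsymbol{\omega}_o)^{1-q}$ is equivalent to $\boldsymbol{\omega}_1 \geq \delta \boldsymbol{\omega}_o$, which is built into the definition of $\boldsymbol{\omega}_1$. Thus $Q_1 \subset Q^\star$, and $\essosc_{Q_1} u \leq \delta \boldsymbol{\omega}_o \leq \boldsymbol{\omega}_1$, finishing the proof.

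The main technical point is precisely the engineered form of $\lambda$: the exponent $(1-q)_+$ is tailored to compensate the intrinsic time scaling $\theta_1 = \boldsymbol{\omega}_1^{q-1}$, which grows as $\boldsymbol{\omega}_1$ shrinks exactly in the singular regime $q<1$, while disappearing in the degenerate regime $q \geq 1$. A secondary subtlety is the case distinction on whether $\boldsymbol{\mu}_o^+ \gtreqless \tfrac12 \boldsymbol{\omega}_o$, which is needed to route the argument to the correct second-alternative lemma and, together with the obstacle hypothesis \eqref{eq:assumptions_obstacle}, guarantees admissibility of the levels $\tfrac12(\boldsymbol{\mu}_o^+ + \boldsymbol{\mu}_o^-)$ used in the energy estimate from above.
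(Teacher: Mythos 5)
Your overall strategy — splitting on the sign of the dominant extreme value, exploiting the first/second alternative dichotomy, applying De Giorgi type and second-alternative lemmas, and then checking the geometric inclusion with the engineered $\lambda$ — is the correct one, and your verification of $Q_1 \subset Q^\star \subset Q_o$ (including the role of $(1-q)_+$ in $\lambda$ and the use of $\boldsymbol{\omega}_1 \geq \delta\boldsymbol{\omega}_o$ in the singular regime) matches the paper exactly. However, the pairing of the measure dichotomy with the second-alternative lemma is backwards, and this is a genuine gap.

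In your near-supremum sub-case $\boldsymbol{\mu}_o^+ \geq \tfrac12\boldsymbol{\omega}_o$, you test $\bigl|\{\boldsymbol{\mu}_o^+ - u \leq \tfrac12\boldsymbol{\omega}_o\}\cap Q_o\bigr|$ against $\nu_o|Q_o|$, and when it is large you invoke Lemma~\ref{l.de_giorgi_2_alternative}. But the hypothesis of Lemma~\ref{l.de_giorgi_2_alternative} reads $\bigl|\{u - \boldsymbol{\mu}^- \leq \tfrac12\boldsymbol{\omega}\}\cap Q\bigr| > \nu|Q|$, i.e., $u$ is close to the \emph{infimum} on a large set — the set you assume large is essentially its complement, so the hypothesis is not met. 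The lemma whose hypothesis does match your set, Lemma~\ref{l.de_giorgi_2_alternative2}, requires $-\tfrac14\boldsymbol{\omega} \leq \boldsymbol{\mu}^+ \leq \tfrac12\boldsymbol{\omega}$, which fails generically in this sub-case. The fix is to swap the dichotomies: in the sub-case where \eqref{e.mu+_bounds_nearzero} holds (equivalently $\boldsymbol{\mu}_o^- \geq -\tfrac12\boldsymbol{\omega}_o$), test $\bigl|\{u - \boldsymbol{\mu}_o^- \leq \tfrac12\boldsymbol{\omega}_o\}\cap Q_o\bigr|$ against $\nu_o|Q_o|$; the first alternative feeds Lemma~\ref{l.de_giorgi_type---} (truncation from below, no obstacle restriction needed, exceptional branch excluded since $|\boldsymbol{\mu}_o^-| \leq \tfrac54\boldsymbol{\omega}_o$), and the second alternative now matches the hypothesis of Lemma~\ref{l.de_giorgi_2_alternative}, whose scaling and obstacle requirements are precisely those available here. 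Symmetrically, in the sub-case where \eqref{e.mu-_bounds_nearzero} holds, test $\bigl|\{\boldsymbol{\mu}_o^+ - u \leq \tfrac12\boldsymbol{\omega}_o\}\cap Q_o\bigr|$, use Lemma~\ref{l.de_giorgi_type+++} for the first alternative (this is where the obstacle condition $\sup_{Q_o}\psi \leq \tfrac12(\boldsymbol{\mu}_o^+ + \boldsymbol{\mu}_o^-)$ is actually required) and Lemma~\ref{l.de_giorgi_2_alternative2} for the second. With that correction the argument closes as you outlined.
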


\begin{proof}
Observe that \eqref{eq:near_zero_reduction} implies that $\left| \boldsymbol{\mu}^\pm_o \right| \leq \tfrac54 \boldsymbol{\omega}_o$. Furthermore, we must have
\begin{equation} \label{e.mu-alternatives-nearzero}
\boldsymbol{\mu}^+_o \geq \tfrac12 \boldsymbol{\omega}_o
\quad \text{ or }\quad
\boldsymbol{\mu}^-_o \leq - \tfrac12 \boldsymbol{\omega}_o,
\end{equation}
since otherwise we would end up in a contradiction. Suppose first that~\eqref{e.mu-alternatives-nearzero}$_1$ holds true. Then, we have $\tfrac12 \boldsymbol{\omega}_o \leq \boldsymbol{\mu}^+_o \leq \tfrac54 \boldsymbol{\omega}_o$. In this case we distinguish between the alternatives
\begin{equation}
	\label{e.alternatives_-}
	\left\{
	\begin{array}{l}
	\left| \left\{ u \leq \boldsymbol{\mu}^-_o + \tfrac12 \boldsymbol{\omega}_o \right\} \cap Q_o \right|
	\leq
	\nu_o \left| Q \right|, \\[5pt]
	\left| \left\{ u \leq \boldsymbol{\mu}^-_o + \tfrac12 \boldsymbol{\omega}_o \right\} \cap Q_o \right|
	>
	\nu_o \left| Q \right|.
	\end{array}
	\right.
\end{equation}
When~\eqref{e.alternatives_-}$_1$ holds true, we apply Lemma~\ref{l.de_giorgi_type---}. Since $|\boldsymbol{\mu}^-_o|\leq \tfrac54 \boldsymbol{\omega}_o$, this yields
$$
\essinf_{Q_{\frac{\rho_o}{2}, \theta \left( \frac{\rho_o}{2} \right)^2}(z_o)} u \geq \boldsymbol{\mu}^-_o + \tfrac14 \boldsymbol{\omega}_o.
$$
On the other hand, if~\eqref{e.alternatives_-}$_2$ holds true, we may apply Lemma~\ref{l.de_giorgi_2_alternative} and obtain
$$
\esssup_{Q_{\frac{\rho_o}{2}, \frac12\nu_o \theta \left( \frac{\rho_o}{2}\right)^2}(z_o)} u \leq \boldsymbol{\mu}^+_o - a \boldsymbol{\omega}_o.
$$
Clearly $\lambda \leq \tfrac12$ and in the case $0<q<1$ we can estimate
$$
\theta_1 \rho_1^2 = \boldsymbol{\omega}_1^{q-1} \cdot \tfrac12 \nu_o \delta^{1-q} \left( \tfrac{\rho_o}{2}\right)^2 \leq \tfrac12 \nu_o \theta \left( \tfrac{\rho_o}{2}\right)^2
$$
by $\boldsymbol{\omega}_1 \geq \delta \boldsymbol{\omega}_o$. If $q>1$ and $\boldsymbol{\omega}_1 = \delta \boldsymbol{\omega}_o$ the same inequality holds true.
In the case where $\boldsymbol{\omega}_1 = 2 \osc_{Q_o} \psi$, we use that $2 \osc_{Q_o} \psi \leq \boldsymbol{\omega}_o$ by assumption. Hence,
$$
Q_1 \subset Q_{\frac{\rho_o}{2}, \frac12\nu_o \theta \left( \frac{\rho_o}{2}\right)^2} \subset Q_{\frac{\rho_o}{2}, \theta \left( \frac{\rho_o}{2} \right)^2}
$$
follows in any case.
Therefore, we have that either 
$$
\essosc_{Q_1} u \leq \essosc_{Q_{\frac{\rho_o}{2}, \theta \left( \frac{\rho_o}{2} \right)^2}} u \leq \boldsymbol{\mu}^+_o - ( \boldsymbol{\mu}^-_o + \tfrac14 \boldsymbol{\omega}_o) = \tfrac34 \boldsymbol{\omega}_o \leq \delta \boldsymbol{\omega}_o \leq \boldsymbol{\omega}_1,
$$
or 
$$
\essosc_{Q_1} u \leq \essosc_{Q_{\frac{\rho_o}{2}, \frac12\nu_o \theta \left( \frac{\rho_o}{2}\right)^2}} u \leq \boldsymbol{\mu}^+_o - a\boldsymbol{\omega}_o - \boldsymbol{\mu}^-_o = \delta \boldsymbol{\omega}_o \leq \boldsymbol{\omega}_1.
$$
This completes the proof in case~\eqref{e.mu-alternatives-nearzero}$_1$.
Now, suppose that~\eqref{e.mu-alternatives-nearzero}$_2$ holds true.
Then, we have that $- \tfrac54 \boldsymbol{\omega}_o \leq \boldsymbol{\mu}^-_o \leq - \tfrac12 \boldsymbol{\omega}_o$ and distinguish between the alternatives
\begin{equation}
	\label{e.alternatives_+}
	\left\{
	\begin{array}{l}
	\left| \left\{ u \geq \boldsymbol{\mu}^+_o - \tfrac12 \boldsymbol{\omega}_o \right\}  \cap Q_o\right|
	\leq
	\nu_o \left| Q \right|, \\[5pt]
	\left| \left\{ u \geq \boldsymbol{\mu}^+_o - \tfrac12 \boldsymbol{\omega}_o \right\} \cap Q_o\right|
	>
	\nu_o \left| Q \right|.
	\end{array}
	\right.
\end{equation}
When~\eqref{e.alternatives_+} holds true, we may apply Lemma~\ref{l.de_giorgi_type+++}, since $|\boldsymbol{\mu}^+_o|\leq \tfrac 54\boldsymbol{\omega}_o$. This implies
$$
\esssup_{Q_{\frac{\rho_o}{2}, \theta \left( \frac{\rho_o}{2} \right)^2}(z_o)} u \leq \boldsymbol{\mu}^+_o - \tfrac14 \boldsymbol{\omega}_o.
$$
On the other hand if~\eqref{e.alternatives_+}$_2$ holds true, we apply Lemma~\ref{l.de_giorgi_2_alternative2} to obtain
$$
\essinf_{Q_{\frac{\rho_o}{2}, \frac12\nu_o \theta \left( \frac{\rho_o}{2}\right)^2}(z_o)} u \geq \boldsymbol{\mu}^-_o + a \boldsymbol{\omega}_o.
$$
By using similar estimates as above, we conclude that
$$
\essosc_{Q_1} u \leq \delta \boldsymbol{\omega}_o \leq \boldsymbol{\omega}_1,
$$
which finishes the proof.
\end{proof}

Up next we will prove a similar result in the case where $u$ is bounded away from zero and positive.

\begin{lemma} \label{l.reduction_of_osc_az+}
Assume that \eqref{eq:assumptions_quantities}, \eqref{eq:assumptions_obstacle} and \eqref{eq:above_zero_reduction} hold true.
For the sequence of cylinders
$$
Q_i := Q_{\rho_i, \theta \rho_i^2}(z_o),\quad \text{ with }\quad \rho_i = \lambda^i \rho_o,\quad \lambda := \sqrt{\frac{\nu_o}{8}}
$$
we define
$$ 
\boldsymbol{\omega}_i := \max \Big\{ \delta \boldsymbol{\omega}_{i-1}, 2 \osc_{Q_{i-1}} \psi \Big\}\quad \text{ for }i \in \N_0.
$$
Then, for any $i \in \N_0$ there holds
$$
\essosc_{Q_i} u \leq \boldsymbol{\omega}_i.
$$
\end{lemma}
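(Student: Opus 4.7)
The plan is to prove the statement by induction on $i$, exploiting the crucial feature of the ``above zero'' regime: because \eqref{eq:above_zero_reduction} keeps $u$ uniformly positive and $\theta = (\boldsymbol{\mu}_o^+)^{q-1}$ is held fixed throughout the iteration, the construction of $Q_i$ does not need to update the time-scaling at each step, which is what allows the sequence $\{\boldsymbol{\omega}_i\}$ to be defined by a single recursion. The base case $i=0$ is \eqref{eq:assumptions_quantities} after setting $\boldsymbol{\omega}_0 := \boldsymbol{\omega}_o$. For the inductive step I would choose, for a given $i \geq 1$,
\[
	\boldsymbol{\mu}_{i-1}^- := \essinf_{Q_{i-1}} u,
	\qquad
	\boldsymbol{\mu}_{i-1}^+ := \boldsymbol{\mu}_{i-1}^- + \boldsymbol{\omega}_{i-1},
\]
so that $\boldsymbol{\mu}_{i-1}^+ \geq \esssup_{Q_{i-1}} u$ by the inductive hypothesis, and then apply either Lemma~\ref{l.de_giorgi_awayzero-} or Lemma~\ref{l.de_giorgi_2_alternative} on the cylinder $Q_{i-1}$.

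Three hypotheses have to be checked for these lemmas. The away-from-zero bound \eqref{e.away_zero_-}: $\boldsymbol{\mu}_{i-1}^- \geq \boldsymbol{\mu}_o^- > \tfrac14 \boldsymbol{\omega}_o \geq \tfrac14 \boldsymbol{\omega}_{i-1}$, using that $\boldsymbol{\omega}_i \leq \boldsymbol{\omega}_{i-1}$. The scaling \eqref{e.scaling_awayzero+} with $\theta^{1/(q-1)} = \boldsymbol{\mu}_o^+$: from $\boldsymbol{\mu}_o^- > \tfrac14 \boldsymbol{\omega}_o$ one obtains $\tfrac15 \boldsymbol{\mu}_o^+ < \boldsymbol{\mu}_o^-$, and hence
\[
	\tfrac15 \boldsymbol{\mu}_o^+ < \boldsymbol{\mu}_o^- \leq \boldsymbol{\mu}_{i-1}^+ \leq \boldsymbol{\mu}_o^+ + \boldsymbol{\omega}_o \leq 2\boldsymbol{\mu}_o^+,
\]
which gives $\tfrac12 \boldsymbol{\mu}_{i-1}^+ \leq \theta^{1/(q-1)} \leq 5 \boldsymbol{\mu}_{i-1}^+$. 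Finally the obstacle-sup condition $\sup_{Q_{i-1}} \psi \leq \tfrac12(\boldsymbol{\mu}_{i-1}^+ + \boldsymbol{\mu}_{i-1}^-)$, which I would deduce via
\[
	\sup_{Q_{i-1}} \psi \leq \inf_{Q_{i-1}} \psi + \osc_{Q_{i-1}} \psi \leq \boldsymbol{\mu}_{i-1}^- + \tfrac12 \boldsymbol{\omega}_{i-1},
\]
after establishing the auxiliary fact $\osc_{Q_{i-1}} \psi \leq \tfrac12 \boldsymbol{\omega}_{i-1}$. For $i = 1$ this is \eqref{eq:assumptions_obstacle} directly; for $i \geq 2$ it follows from $\boldsymbol{\omega}_{i-1} \geq 2\osc_{Q_{i-2}} \psi$ combined with $Q_{i-1} \subset Q_{i-2}$.

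With these verifications in hand, I would split on the measure of $\{u - \boldsymbol{\mu}_{i-1}^- \leq \tfrac12 \boldsymbol{\omega}_{i-1}\} \cap Q_{i-1}$. If it is at most $\nu_o |Q_{i-1}|$, Lemma~\ref{l.de_giorgi_awayzero-} gives $u \geq \boldsymbol{\mu}_{i-1}^- + \tfrac14 \boldsymbol{\omega}_{i-1}$ a.e.\ in $Q_{\rho_{i-1}/2, \theta(\rho_{i-1}/2)^2}$ and hence oscillation at most $\tfrac34 \boldsymbol{\omega}_{i-1} \leq \delta \boldsymbol{\omega}_{i-1}$. Otherwise, Lemma~\ref{l.de_giorgi_2_alternative}, applied with the parameter $\nu = \nu_o$ that defines the constant $a$, yields $u \leq \boldsymbol{\mu}_{i-1}^+ - a\boldsymbol{\omega}_{i-1}$ a.e.\ in $Q_{\rho_{i-1}/2, \tfrac12 \nu_o \theta(\rho_{i-1}/2)^2}$, with oscillation again at most $\delta \boldsymbol{\omega}_{i-1}$. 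The choice $\lambda = \sqrt{\nu_o/8}$ guarantees $\rho_i \leq \rho_{i-1}/2$ and $\theta \rho_i^2 = \tfrac12 \nu_o \theta (\rho_{i-1}/2)^2$, so $Q_i$ lies inside the smaller of the two sub-cylinders in either case, and the induction is closed by $\essosc_{Q_i} u \leq \delta \boldsymbol{\omega}_{i-1} \leq \boldsymbol{\omega}_i$.

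The main obstacle is the bookkeeping required to maintain all three hypotheses through the iteration despite $\boldsymbol{\omega}_i$ shrinking while $\theta$ stays frozen. The scaling condition survives only because of the structural rigidity $\boldsymbol{\mu}_o^- > \tfrac15 \boldsymbol{\mu}_o^+$ inherent in \eqref{eq:above_zero_reduction}, which keeps $\boldsymbol{\mu}_{i-1}^+$ comparable to $\boldsymbol{\mu}_o^+ = \theta^{1/(q-1)}$ for every $i$, and the obstacle-sup condition survives precisely because the definition of $\boldsymbol{\omega}_i$ never allows it to fall below $2 \osc_{Q_{i-1}} \psi$.
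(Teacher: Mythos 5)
Your proof is correct and follows essentially the same approach as the paper: induction on $i$, verifying hypotheses \eqref{e.away_zero_-}, \eqref{e.scaling_awayzero+} and the obstacle-sup condition at each stage (using the monotonicity $\boldsymbol{\mu}^-_o \leq \boldsymbol{\mu}^-_{i-1}$, the equivalence $\boldsymbol{\mu}^-_o > \tfrac14\boldsymbol{\omega}_o \Leftrightarrow \boldsymbol{\mu}^+_o < 5\boldsymbol{\mu}^-_o$, and $\boldsymbol{\omega}_i \geq 2\osc_{Q_{i-1}}\psi$), then dichotomizing on the measure of the near-infimum set and applying Lemma~\ref{l.de_giorgi_awayzero-} or Lemma~\ref{l.de_giorgi_2_alternative}, with the geometry $\theta\rho_i^2 = \tfrac12\nu_o\theta(\rho_{i-1}/2)^2$ closing the loop. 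The only minor presentational difference is that you derive the upper scaling bound from $\tfrac15\boldsymbol{\mu}^+_o < \boldsymbol{\mu}^+_{i-1}$ whereas the paper goes via $\theta^{1/(q-1)} < 5\boldsymbol{\mu}^-_i \leq 5\boldsymbol{\mu}^+_i$, but these are the same estimate.
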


\begin{proof}
First, observe that 
$$
Q_{i+1} \subset Q_{\frac{\rho_i}{2}, \frac12 \nu_o \theta \left( \frac{\rho_i}{2} \right)^2}(z_o) \subset Q_{\frac{\rho_i}{2}, \theta \left( \frac{\rho_i}{2} \right)^2} (z_o) \subset Q_i
$$
for any $i \in \N_0$. Define 
$$
	\boldsymbol{\mu}^-_i := \essinf_{Q_i} u,
	\quad \boldsymbol{\mu}^+_i = \boldsymbol{\mu}^-_i + \boldsymbol{\omega}_i
$$
for every $i \in \N$.
Now by the assumptions, we already have that $\boldsymbol{\omega}_1 \leq \boldsymbol{\omega}_o$ holds true.
By induction it follows directly that 
$$
	\tfrac14 \boldsymbol{\omega}_{i+1}
	\leq
	\tfrac14 \boldsymbol{\omega}_i
	<
	\boldsymbol{\mu}^-_i
	\leq
	\boldsymbol{\mu}^-_{i+1}
$$
for all $i \in \N_0$.
Since \eqref{eq:above_zero_reduction} is equivalent to $\boldsymbol{\mu}^+_o <  5 \boldsymbol{\mu}^-_o$, we have that $\theta^\frac{1}{q-1} = \boldsymbol{\mu}^+_o <  5 \boldsymbol{\mu}^-_o \leq  5 \boldsymbol{\mu}^-_i \leq 5 \boldsymbol{\mu}^+_i $.
Further, we know that $\boldsymbol{\mu}^+_i = \boldsymbol{\mu}^-_i + \boldsymbol{\omega}_i \leq \esssup_{Q_o} u + \boldsymbol{\omega}_o \leq 2 \boldsymbol{\mu}^+_o = 2 \theta^\frac{1}{q-1}$.
Therefore we find that
$$
	\min \big\{ \left( 5 \boldsymbol{\mu}^+_i \right)^{q-1},
	\left( \tfrac12 \boldsymbol{\mu}^+_i \right)^{q-1} \big\}
	\leq
	\theta
	\leq
	\max \big\{ \left( 5 \boldsymbol{\mu}^+_i \right)^{q-1},
	\left( \tfrac12 \boldsymbol{\mu}^+_i \right)^{q-1} \big\}
$$
for any $i \in \N_0$.
Moreover, we have that 
$$
\sup_{Q_i} \psi = \inf_{Q_i}\psi + \osc_{Q_i} \psi \leq \essinf_{Q_i} u + \osc_{Q_{i-1}} \psi \leq \boldsymbol{\mu}^-_i + \tfrac12 \boldsymbol{\omega}_i = \tfrac12 (\boldsymbol{\mu}^+_i + \boldsymbol{\mu}^-_i) 
$$
for every $i \in \N_0$.
Assume that $\essosc_{Q_i} u \leq \boldsymbol{\omega}_i$ for some $i \in \N$.
For $i=0$, this clearly holds by \eqref{eq:assumptions_quantities}.
Then, in particular we have that $\boldsymbol{\mu}^+_i = \boldsymbol{\mu}^-_i + \boldsymbol{\omega}_i \geq \essinf_{Q_i} u + \essosc_{Q_i} u = \esssup_{Q_i} u$.
Now we distinguish between the alternatives 
\begin{equation*}
	\left\{
	\begin{array}{l}
	\left| \left\{ u \leq \boldsymbol{\mu}^-_i + \tfrac12 \boldsymbol{\omega}_i \right\} \cap Q_i \right|
	\leq
	\nu_o \left| Q_i \right|, \\[5pt]
	\left| \left\{ u \leq \boldsymbol{\mu}^-_i + \tfrac12 \boldsymbol{\omega}_i \right\} \cap Q_i \right|
	>
	\nu_o \left| Q_i \right|.
	\end{array}
	\right.
\end{equation*}
When the first alternative holds true, by Lemma~\ref{l.de_giorgi_awayzero-} we obtain that
$$
\essinf_{Q_{\frac{\rho_i}{2}, \theta \left( \frac{\rho_i}{2} \right)^2}} u \geq \boldsymbol{\mu}^-_i + \tfrac14 \boldsymbol{\omega}_i.
$$
On the other hand if the second alternative holds true, Lemma~\ref{l.de_giorgi_2_alternative} implies that
$$
\esssup_{Q_{\frac{\rho_i}{2}, \frac12 \nu_o \theta \left( \frac{\rho_i}{2} \right)^2}} u \leq \boldsymbol{\mu}^+_i - a \boldsymbol{\omega}_i
$$
for some $a = a(n,q,C_o,C_1) \in (0,\tfrac{1}{64}]$.
Recalling that $\delta = 1-a$, we see that in both cases
$$
\essosc_{Q_{i+1}} u \leq \delta \boldsymbol{\omega}_i \leq \boldsymbol{\omega}_{i+1},
$$
which completes the proof.
\end{proof}

Finally we state and prove a similar lemma in the case where $u$ is bounded away from zero and negative.

\begin{lemma} \label{l.reduction_of_osc_az-}
Assume that the hypotheses \eqref{eq:assumptions_quantities}, \eqref{eq:assumptions_obstacle} and \eqref{eq:below_zero_reduction} hold.
For the sequence of cylinders
$$
Q_i := Q_{\rho_i, \theta \rho_i^2}(z_o),\quad \text{ with }\quad \rho_i = \lambda^i \rho_o,\quad \lambda := \sqrt{\frac{\nu_o}{8}}
$$
we define
$$
	\boldsymbol{\omega}_i := \max \Big\{ \delta \boldsymbol{\omega}_{i-1}, 2 \osc_{Q_{i-1}} \psi \Big\}\quad \text{ for }i \in \N_0.
$$
Then, for any $i \in \N_0$ there holds
$$
\essosc_{Q_i} u \leq \boldsymbol{\omega}_i.
$$
\end{lemma}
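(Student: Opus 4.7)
The plan is to proceed by induction on $i$, mirroring the proof of Lemma~\ref{l.reduction_of_osc_az+} with the roles of supremum and infimum of $u$ (and of $\boldsymbol{\mu}^+$ and $\boldsymbol{\mu}^-$) interchanged to reflect the negativity of $u$ under~\eqref{eq:below_zero_reduction}. First, the choice $\lambda = \sqrt{\nu_o/8}$ gives the nested chain $Q_{i+1}\subset Q_{\rho_i/2,\frac{1}{2}\nu_o\theta(\rho_i/2)^2}(z_o)\subset Q_{\rho_i/2,\theta(\rho_i/2)^2}(z_o)\subset Q_i$. I would define $\boldsymbol{\mu}^+_i:=\esssup_{Q_i}u$ and $\boldsymbol{\mu}^-_i:=\boldsymbol{\mu}^+_i-\boldsymbol{\omega}_i$ for $i\geq 1$, so that $\boldsymbol{\mu}^-_i\leq\essinf_{Q_i}u$ under the inductive hypothesis $\essosc_{Q_i}u\leq\boldsymbol{\omega}_i$. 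Monotonicity $\boldsymbol{\mu}^+_{i+1}\leq\boldsymbol{\mu}^+_i$ together with $\boldsymbol{\omega}_{i+1}\leq\boldsymbol{\omega}_i$ and the base inequality $\boldsymbol{\mu}^+_o<-\tfrac14\boldsymbol{\omega}_o$ then propagate to give $\boldsymbol{\mu}^+_i<-\tfrac14\boldsymbol{\omega}_i$ on every $Q_i$, which is precisely hypothesis~\eqref{e.away_zero_+} and the first half of~\eqref{e.mu-_bounds_awayzero}.

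Next I would verify the scaling conditions: using $|\boldsymbol{\mu}^-_i|=|\boldsymbol{\mu}^+_i|+\boldsymbol{\omega}_i$ together with the sandwich $|\boldsymbol{\mu}^+_o|\leq|\boldsymbol{\mu}^+_i|\leq|\boldsymbol{\mu}^-_o|$, $\boldsymbol{\omega}_i\leq\boldsymbol{\omega}_o$, and the bound $|\boldsymbol{\mu}^+_o|>\tfrac15|\boldsymbol{\mu}^-_o|$ equivalent to~\eqref{eq:below_zero_reduction}, one obtains the comparability $\tfrac15|\boldsymbol{\mu}^-_o|<|\boldsymbol{\mu}^-_i|\leq 2|\boldsymbol{\mu}^-_o|=2\theta^{1/(q-1)}$. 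This secures~\eqref{e.scaling_awayzero-} and the second half of~\eqref{e.mu-_bounds_awayzero} up to an absolute constant that is absorbed into the universal parameters $\nu_o$ and $a$ of Section~\ref{s.red-osc}. The obstacle compatibility $\sup_{Q_i}\psi\leq\tfrac12(\boldsymbol{\mu}^+_i+\boldsymbol{\mu}^-_i)$ needed by Lemma~\ref{l.de_giorgi_awayzero+} is the more delicate point, and I expect this to be the main technical step: it must be derived from the base obstacle assumption~\eqref{eq:assumptions_obstacle} combined with the inductive control of $\esssup_{Q_i}u$, the sign information $\boldsymbol{\mu}^+_i<0$, and the bound $\osc_{Q_{i-1}}\psi\leq\tfrac12\boldsymbol{\omega}_i$ built into the definition of $\boldsymbol{\omega}_i$.

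With these hypotheses verified on each $Q_i$, the induction step follows the usual dichotomy: either $|\{\boldsymbol{\mu}^+_i-u\leq\tfrac12\boldsymbol{\omega}_i\}\cap Q_i|\leq\nu_o|Q_i|$, in which case Lemma~\ref{l.de_giorgi_awayzero+} yields $\boldsymbol{\mu}^+_i-u\geq\tfrac14\boldsymbol{\omega}_i$ a.e.\ in $Q_{\rho_i/2,\theta(\rho_i/2)^2}$; or the complementary measure estimate holds and Lemma~\ref{l.de_giorgi_2_alternative2}, which does not require the obstacle compatibility, gives $u-\boldsymbol{\mu}^-_i\geq a\boldsymbol{\omega}_i$ a.e.\ in $Q_{\rho_i/2,\frac{1}{2}\nu_o\theta(\rho_i/2)^2}\supset Q_{i+1}$. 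Either outcome produces $\essosc_{Q_{i+1}}u\leq\delta\boldsymbol{\omega}_i\leq\boldsymbol{\omega}_{i+1}$, which closes the induction.
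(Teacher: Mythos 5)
Your plan correctly identifies the structure (induction, nested cylinders, dichotomy via Lemmas~\ref{l.de_giorgi_awayzero+} and~\ref{l.de_giorgi_2_alternative2}) and you rightly flag the obstacle compatibility as the delicate step --- but the construction you propose for $\boldsymbol{\mu}^\pm_i$ makes that step fail, and this is not a cosmetic difference from the paper.

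You set $\boldsymbol{\mu}^+_i:=\esssup_{Q_i}u$ and $\boldsymbol{\mu}^-_i:=\boldsymbol{\mu}^+_i-\boldsymbol{\omega}_i$. The only mechanism available to bound $\sup_{Q_i}\psi$ is
\[
\sup_{Q_i}\psi=\inf_{Q_i}\psi+\osc_{Q_i}\psi\leq\essinf_{Q_i}u+\tfrac12\boldsymbol{\omega}_i,
\]
and to conclude $\sup_{Q_i}\psi\leq\tfrac12(\boldsymbol{\mu}^+_i+\boldsymbol{\mu}^-_i)=\boldsymbol{\mu}^-_i+\tfrac12\boldsymbol{\omega}_i$ you would need $\essinf_{Q_i}u\leq\boldsymbol{\mu}^-_i$. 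But under the induction hypothesis $\essosc_{Q_i}u\leq\boldsymbol{\omega}_i$ your choice gives $\boldsymbol{\mu}^-_i=\esssup_{Q_i}u-\boldsymbol{\omega}_i\leq\essinf_{Q_i}u$, with strict inequality whenever the oscillation bound is strict (which is the generic case, because $\boldsymbol{\omega}_i$ contains the lower barrier $2\osc_{Q_{i-1}}\psi$). So the inequality runs the wrong way and the obstacle compatibility cannot be recovered. There is also a secondary difficulty: with your $\boldsymbol{\mu}^-_i$, $|\boldsymbol{\mu}^-_i|$ is not monotone and you only obtain $|\boldsymbol{\mu}^-_i|\leq 2\theta^{1/(q-1)}$ rather than $|\boldsymbol{\mu}^-_i|\leq\theta^{1/(q-1)}$; you propose to absorb the factor $2$ into the lemma constants, but the De Giorgi lemmas are stated with the sharp scaling condition, so that would require reworking their proofs.

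The paper resolves both issues simultaneously with a different bookkeeping: it defines $\boldsymbol{\mu}^-_i:=\essinf_{Q_i}u$ (so that the obstacle estimate above closes with equality in the critical step, and $\boldsymbol{\mu}^-_i$ is automatically nondecreasing, securing the scaling condition exactly) and $\boldsymbol{\mu}^+_i:=\min\{\boldsymbol{\mu}^-_i+\boldsymbol{\omega}_i,\,\boldsymbol{\mu}^+_{i-1}\}$ (the $\min$ enforces monotonicity of $\boldsymbol{\mu}^+_i$, which is needed to propagate $5\boldsymbol{\mu}^+_i<\boldsymbol{\mu}^-_i$). The price of the $\min$ is that $\boldsymbol{\mu}^+_i-\boldsymbol{\mu}^-_i$ may be strictly smaller than $\boldsymbol{\omega}_i$, so the De Giorgi alternatives must be run with the auxiliary width $\tilde{\boldsymbol{\omega}}_i=\boldsymbol{\mu}^+_i-\boldsymbol{\mu}^-_i\leq\boldsymbol{\omega}_i$, and one must track the extra invariant $\esssup_{Q_{i-1}}u\leq\boldsymbol{\mu}^+_{i-1}$ alongside the oscillation bound. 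This two-part induction is the genuinely new element relative to Lemma~\ref{l.reduction_of_osc_az+}, and it is missing from your proposal.
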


\begin{proof}
First, observe that 
$$
Q_{i+1} \subset Q_{\frac{\rho_i}{2}, \frac12 \nu_o \theta \left( \frac{\rho_i}{2} \right)^2}(z_o) \subset Q_{\frac{\rho_i}{2}, \theta \left( \frac{\rho_i}{2} \right)^2} (z_o) \subset Q_i
$$
for any $i \in \N_0$. Define 
$$
\boldsymbol{\mu}^-_i := \essinf_{Q_i} u, \quad \boldsymbol{\mu}^+_i = \min \left\{ \boldsymbol{\mu}^-_i + \boldsymbol{\omega}_i, \boldsymbol{\mu}_{i-1}^+ \right \}
$$
for every $i \in \N$. Now by the assumptions, we already have that $\boldsymbol{\omega}_1 \leq \boldsymbol{\omega}_o$ holds true and by induction it directly follows that
$$
	\boldsymbol{\omega}_{i+1} \leq \boldsymbol{\omega}_i
	\quad \text{and} \quad
	\boldsymbol{\mu}^+_{i+1}
	\leq
	\boldsymbol{\mu}^+_i
	<
	- \tfrac14 \boldsymbol{\omega}_i
	\leq
	- \tfrac14 \boldsymbol{\omega}_{i+1}
$$
for all $i \in \N_0$, where we have used that $\{\boldsymbol{\mu}^+_i\}_{i\in\N_0}$ is a nonincreasing sequence by definition.
Since \eqref{eq:below_zero_reduction} is equivalent to $\boldsymbol{\mu}^-_o > 5 \boldsymbol{\mu}^+_o$ and $\{\boldsymbol{\mu}^+_i\}_{i\in\N_0}$ is nonincreasing, we have that $\theta^\frac{1}{q-1} = - \boldsymbol{\mu}^-_o <  -5 \boldsymbol{\mu}^+_o \leq -5 \boldsymbol{\mu}^-_i $
and that $\theta^\frac{1}{q-1} = -\boldsymbol{\mu}^-_o \geq - \boldsymbol{\mu}^-_i$; i.e., we find that
$$
	\min \big\{ \left| 5 \boldsymbol{\mu}^-_i \right|^{q-1},
	\left| \boldsymbol{\mu}^-_i \right|^{q-1} \big\}
	\leq
	\theta
	\leq
	\max \big\{ \left| 5 \boldsymbol{\mu}^-_i \right|^{q-1},
	\left| \boldsymbol{\mu}^-_i \right|^{q-1} \big\}
$$ 
for any $i \in \N_0$.
Up next we show that the condition $\sup_{Q_i}\psi \leq \tfrac12 \left( \boldsymbol{\mu}^+_i + \boldsymbol{\mu}^-_i \right)$ holds true for all $i\in \N_0$. For $i=0$ this is part of hypothesis \eqref{eq:assumptions_obstacle}. Suppose that this holds for some $i \in \N$.
On the one hand if $\boldsymbol{\mu}^+_{i+1} = \boldsymbol{\mu}^-_{i+1} + \boldsymbol{\omega}_{i+1}$, we have that
$$
\sup_{Q_{i+1}} \psi = \inf_{Q_{i+1}}\psi + \osc_{Q_{i+1}} \psi \leq \essinf_{Q_{i+1}} u + \osc_{Q_{i}} \psi \leq \boldsymbol{\mu}^-_{i+1} + \tfrac12 \boldsymbol{\omega}_{i+1} = \tfrac12 (\boldsymbol{\mu}^+_{i+1} + \boldsymbol{\mu}^-_{i+1}). 
$$
On the other hand if $\boldsymbol{\mu}^+_{i+1} = \boldsymbol{\mu}_{i}^+$,
by the induction assumption and the property that $\boldsymbol{\mu}^-_i \leq \boldsymbol{\mu}^-_{i+1}$ we may estimate
$$
\sup_{Q_{i+1}}\psi \leq \sup_{Q_{i}} \psi \leq \tfrac12 \left( \boldsymbol{\mu}^+_i + \boldsymbol{\mu}^-_i \right) \leq \tfrac12 \left( \boldsymbol{\mu}^+_{i+1} + \boldsymbol{\mu}^-_{i+1} \right).
$$
Next we want to show that for every $i \in \N$, there holds
\begin{equation} \label{e.az_induction-}
\esssup_{Q_{i-1}}u \leq \boldsymbol{\mu}^+_{i-1} \quad \text{ and }\quad \essosc_{Q_i} u \leq \boldsymbol{\omega}_i.
\end{equation}
For $i =1$,~\eqref{e.az_induction-}$_1$ clearly holds by assumption.
Now, we consider the alternatives
\begin{equation*}
	\left\{
	\begin{array}{l}
	\left| \left\{ u \geq \boldsymbol{\mu}^+_o - \tfrac12 \boldsymbol{\omega}_o \right\} \cap Q_o \right|
	\leq
	\nu_o \left| Q_o \right|, \\[5pt]
	\left| \left\{ u \geq \boldsymbol{\mu}^+_o - \tfrac12 \boldsymbol{\omega}_o \right\} \cap Q_o \right|
	>
	\nu_o \left| Q_o \right|.
	\end{array}
	\right.
\end{equation*}
If the first alternative holds true, we may apply Lemma~\ref{l.de_giorgi_awayzero+}. On the other hand, if the second alternative holds true, we use Lemma~\ref{l.de_giorgi_2_alternative2}.
In both cases, we find that 
$$
\essosc_{Q_1} u \leq \delta \boldsymbol{\omega}_o \leq \boldsymbol{\omega}_1,
$$
where the last inequality holds by definition of $\boldsymbol{\omega}_1$, and $\delta = 1-a$ with the constant $a$ from Lemma~\ref{l.de_giorgi_2_alternative2}. This takes care of the case $i=1$. Now let us suppose that~\eqref{e.az_induction-} holds for some $i\in \N$. It follows that either we have
$$
\boldsymbol{\mu}^+_{i} = \boldsymbol{\mu}^-_{i} + \boldsymbol{\omega}_{i} \geq \essinf_{Q_i} u + \essosc_{Q_i} u = \esssup_{Q_i} u
$$
with assumption~\eqref{e.az_induction-}$_2$ or that 
$$
\boldsymbol{\mu}^+_i = \boldsymbol{\mu}^+_{i-1} \geq \esssup_{Q_{i-1}} u \geq \esssup_{Q_i} u
$$
with assumption~\eqref{e.az_induction-}$_1$. The two inequalities above already prove~\eqref{e.az_induction-}$_1$. Let us define $ \tilde{\boldsymbol{\omega}}_i = \boldsymbol{\mu}^+_i - \boldsymbol{\mu}^-_i \leq \boldsymbol{\omega}_i$. Now we will use the alternatives
\begin{equation}
	\label{e.alternatives_+_awayzero-}
	\left\{
	\begin{array}{l}
	\left| \left\{ u \geq \boldsymbol{\mu}^+_i - \tfrac12 \tilde{\boldsymbol{\omega}}_i \right\} \cap Q_i \right|
	\leq
	\nu_o \left| Q_i \right|, \\[5pt]
	\left| \left\{ u \geq \boldsymbol{\mu}^+_i - \tfrac12 \tilde{\boldsymbol{\omega}}_i \right\} \cap Q_i \right|
	>
	\nu_o \left| Q_i \right|.
	\end{array}
	\right.
\end{equation}
In the first case, we apply Lemma~\ref{l.de_giorgi_awayzero+} with $ \tilde{\boldsymbol{\omega}}_i$ in place of $\boldsymbol{\omega}_i$. This implies that 
$$
\esssup_{Q_{\frac{\rho_i}{2}, \theta \left( \frac{\rho_i}{2} \right)^2}} u \leq \boldsymbol{\mu}^+_i - \tfrac14 \tilde{\boldsymbol{\omega}}_i.
$$
If~\eqref{e.alternatives_+_awayzero-}$_2$ holds true, we use Lemma~\ref{l.de_giorgi_2_alternative2}, which gives us
$$
\essinf_{Q_{\frac{\rho_i}{2}, \frac12 \nu_o \theta \left( \frac{\rho_i}{2} \right)^2}} u \geq \boldsymbol{\mu}^-_i + a \tilde{\boldsymbol{\omega}}_i
$$
for some $a = a(n,q,C_o,C_1) \in (0,\tfrac{1}{64}]$.
Recalling that $\delta = 1-a$, the preceding two inequalities both imply~\eqref{e.az_induction-}$_2$, which completes the proof.
\end{proof}

\section{The final argument} \label{s.final_argument}
\subsection{The final argument begins}
In the following we assume that $u$ is globally bounded for ease of notation. However, the argument holds for a locally bounded weak solution $u$ by restricting to a compact subset of $\Omega_T$. Thus, we can assume that 
\begin{equation} \label{e.rescaling}
\essosc_{\Omega_T} u \leq 1,\quad \big|\esssup_{\Omega_T} u\big|\leq \tfrac12\quad  \text{and }\quad \big|\essinf_{\Omega_T}  u\big| \leq \tfrac12
\end{equation}
by using a rescaling argument as in Appendix~\ref{appendix_a} with $M = 2 \|u\|_\infty$. 

Assume that $\psi \in C^{0; \beta, \frac{\beta}{2}}(\Omega_T)$ for the exponent $\beta \in (0,1)$, i.e.
$$
	[\psi]_{0; \beta, \frac{\beta}{2}}
	:=
	\sup_{(x,t), (y,s) \in \Omega_T}
	\frac{|\psi(x,t) - \psi(y,s)|}{\max\{ |x-y|^\beta, |t-s|^\frac{\beta}{2}\}}
	< \infty.
$$
Let $\epsilon = \frac{2 \beta (1-q)_+}{2 + \beta (1-q)_+} \in [0,2)$
and $\gamma_o = \frac{2 \beta}{2 + \beta (1-q)_+} = \big( 1 - \frac{\epsilon}{2} \big) \beta \in (0, \beta]$. Observe that $\eps = 0$ and $\gamma_o = \beta$ in the singular case $q>1$.
Further, consider an arbitrary point $z_o = (x_o, t_o) \in \Omega_T$ and let $R \in (0,1)$ be so small that $Q_{R, R^{2-\epsilon}}(z_o) \Subset \Omega_T$.
In the following, we omit $z_o$ from our notation for simplicity.
Next, we consider the quantity
$$
	\Psi(\rho)
	:=
	\max \big\{ \rho^{\gamma_o}, 2 \osc_{Q_{\rho, \rho^{2-\epsilon}}} \psi \big\}.
$$
By the assumption $\psi \in C^{0; \beta, \frac{\beta}{2}}(\Omega_T)$, the choice of $\epsilon$ and the fact that $\rho \in (0,1)$ we obtain that
$$
	\osc_{Q_{\rho, \rho^{2-\epsilon}}} \psi
	\leq
	[\psi]_{0; \beta, \frac{\beta}{2}}
	\max \big\{ \rho^\beta, \rho^{(2-\epsilon) \frac{\beta}{2}} \big\}
	=
	[\psi]_{0; \beta, \frac{\beta}{2}} \rho^{\gamma_o}.
$$
Thus, we conclude that $u$ is H\"older continuous at $(x_o, t_o)$ in the case that the bound
$$
	\essosc_{Q_{\rho, \rho^{2-\epsilon}}} u
	\leq
	\Psi(\rho)
	\quad \forall \rho \in (0,R]
$$
holds.
In order to prove the H\"older continuity of $u$ in the opposite case,
we set $\rho_o = R$ if
$$
	\Psi(R)
	<
	\essosc_{Q_{R, R^{2-\epsilon}}} u.
$$
Otherwise, there exists $\rho_o \in (0,R)$ such that
\begin{equation}
	\label{e.rho_o}
	\Psi(\rho_o) < \essosc_{Q_{\rho_o, \rho_o^{2-\epsilon}}} u
	\qquad \text{and} \qquad
	\essosc_{Q_{\rho, \rho^{2-\epsilon}}} u \leq 2 \Psi(\rho)
	\quad \forall \rho \in [\rho_o, R].
\end{equation}
For this choice of $\rho_o$, we define
$$
	\boldsymbol{\mu}^+_o := \esssup_{Q_{\rho_o, \rho_o^{2-\epsilon}}} u,
	\quad
	\boldsymbol{\mu}^-_o := \essinf_{Q_{\rho_o, \rho_o^{2-\epsilon}}} u,
	\quad
	\boldsymbol{\omega}_o := \boldsymbol{\mu}^+_o - \boldsymbol{\mu}^-_o.
$$
In the case $0<q<1$ we define $\theta_o := \boldsymbol{\omega}_o^{q-1}$.
Further, we compute that
$$
	\theta_o
	=
	\Big( \essosc_{Q_{\rho_o, \rho_o^{2-\epsilon}}} u \Big)^{q-1}
	<
	\Psi(\rho_o)^{q-1}
	\leq
	\rho_o^{-\epsilon}
$$
by definition of $\rho_o$.
In the singular case $q>1$, we define
$$
	\theta_o :=
	\left\{
	\begin{array}{ll}
		\boldsymbol{\omega}^{q-1}_o
		&\text{if $\boldsymbol{\mu}^+_o \geq -\frac{1}{4} \boldsymbol{\omega}_o$ and
		$\boldsymbol{\mu}^-_o \leq \frac{1}{4} \boldsymbol{\omega}_o$}, \\
		\left( \boldsymbol{\mu}^+_o \right)^{q-1}
		&\text{if $\boldsymbol{\mu}^-_o > \frac{1}{4} \boldsymbol{\omega}_o$}, \\
		\left| \boldsymbol{\mu}^-_o\right|^{q-1}
		&\text{if $\boldsymbol{\mu}^+_o < -\frac{1}{4} \boldsymbol{\omega}_o$}. \\
	\end{array}
	\right.
$$
By taking into account~\eqref{e.rescaling} we conclude that in any case $\theta_o \leq 1$. Hence, we have the set inclusion
$$
	Q_o :=
	Q_{\rho_o, \theta_o \rho_o^2}
	\subset
	Q_{\rho_o, \rho_o^{2-\epsilon}}.
$$
Since $u \geq \psi$ a.e.~in $\Omega_T$ and by the choice of $\rho_o$, we deduce that
\begin{align}
	\sup_{Q_o} \psi
	&\leq
	\sup_{Q_{\rho_o, \rho_o^{2-\epsilon}}} \psi
	=
	\inf_{Q_{\rho_o, \rho_o^{2-\epsilon}}} \psi
	+ \osc_{Q_{\rho_o, \rho_o^{2-\epsilon}}} \psi
	<
	\essinf_{Q_{\rho_o, \rho_o^{2-\epsilon}}} u
	+ \tfrac12 \essosc_{Q_{\rho_o, \rho_o^{2-\epsilon}}} u \nonumber\\
	&=
	\boldsymbol{\mu}^-_o + \tfrac12 \boldsymbol{\omega}_o
	=
	\tfrac12 \big( \boldsymbol{\mu}^+_o + \boldsymbol{\mu}^-_o \big).
	\label{e.psi}
\end{align}
By the definition of $\Psi$ and~\eqref{e.rho_o}$_1$ we infer
\begin{equation} \label{e.psi_2}
\osc_{Q_o} \psi \leq \osc_{Q_{\rho_o,\rho_o^{2-\eps}}}\psi \leq \tfrac12 \Psi(\rho_o) < \tfrac12 \boldsymbol{\omega}_o = \tfrac12 \left( \boldsymbol{\mu}_o^+ - \boldsymbol{\mu}_o^- \right).
\end{equation}
Now, we proceed as follows:
Suppose that $i_o \in \N_0 \cup \{\infty\}$ is the largest index for which we have
\begin{equation} \label{e.near_zero}
	\boldsymbol{\mu}^+_i \geq -\tfrac14 \boldsymbol{\omega}_i
	\quad \text{ and }\quad
	\boldsymbol{\mu}^-_i \leq \tfrac14 \boldsymbol{\omega}_i
\end{equation}
for all $i \in \{ 0,1, \ldots,i_o -1  \}$.
This means that up to the index $i_o - 1$, we apply the reduction of oscillation for the case where $u$ is near zero (see Section~\ref{s.iteration_nearzero}).
For every $i \geq i_o$ it then follows that either
\begin{equation} \label{e.away_zero}
\boldsymbol{\mu}^-_i > \tfrac14 \boldsymbol{\omega}_i\quad \text{ or }\quad \boldsymbol{\mu}^+_i < - \tfrac14 \boldsymbol{\omega}_i
\end{equation}
holds true and we use the results on reduction of oscillation for either one of the cases where $u$ is away from zero (see Sections~\ref{s.iteration_awayzero+}--\ref{s.iteration_awayzero-}).
Observe that it is also possible that $i_o = 0$, which means that the case where $u$ is near zero never occurs, or that $i_o = \infty$ and the iteration is carried out for $u$ near zero completely.

\subsection{Reduction of oscillation near zero} \label{s.iteration_nearzero}
Suppose that $i_o > 0$, otherwise we can skip this part and move directly to either Section~\ref{s.iteration_awayzero+} or~\ref{s.iteration_awayzero-} depending on which case in~\eqref{e.away_zero} holds true.
For $i \in \{ 1,2,\ldots ,i_o \}$, define 
\begin{equation*}
	\left\{
	\begin{array}{c}
		\rho_i := \lambda \rho_{i-1},
		\quad
		\boldsymbol{\omega}_i :=
		\max\{ \delta \boldsymbol{\omega}_{i-1}, 2 \osc_{Q_{i-1}} \psi \},
		\quad
		\theta_i := \boldsymbol{\omega}_i^{q-1}, \\[5pt]
		\quad
		\lambda := \sqrt{\tfrac18 \nu_o \delta^{(1-q)_+}}
		\quad
		Q_i := Q_{\rho_i,\theta_i \rho_i^2}, \\[5pt]
		\boldsymbol{\mu}_i^- := \essinf_{Q_i} u
		\quad \text{ and } \quad
		\boldsymbol{\mu}_i^+ := \boldsymbol{\mu}_i^- + \boldsymbol{\omega}_i.
	\end{array}
	\right.
\end{equation*}
This implies 
$$
\sup_{Q_i} \psi = \inf_{Q_i} \psi + \osc_{Q_i} \psi \leq \essinf_{Q_i} u + \osc_{Q_{i-1}} \psi \leq \boldsymbol{\mu}^-_i + \tfrac12 \boldsymbol{\omega}_i = \tfrac12 \left( \boldsymbol{\mu}^-_i + \boldsymbol{\mu}^+_i \right),
$$
and
$$
\osc_{Q_i} \psi \leq \osc_{Q_{i-1}} \psi \leq \tfrac12 \boldsymbol{\omega}_i = \tfrac12 \left( \boldsymbol{\mu}_i^+ - \boldsymbol{\mu}_i^- \right) 
$$
for every $i \in \{1,2,\ldots,i_o\}$. Now we claim that 
\begin{equation} \label{e.iteration_nearzero}
Q_{i_o} \subset Q_{i_o-1} \subset \cdots \subset Q_o\quad \text{ and }\quad \essosc_{Q_i} u \leq \boldsymbol{\omega}_i \quad \text{ for all } i\in \{0,1, \ldots , i_o\}.
\end{equation}
Clearly this holds true by definitions when $i=0$. Suppose that the statement holds true for some $i < i_o$. Then we have that
$$
\boldsymbol{\mu}^+_i = \boldsymbol{\mu}^-_i + \boldsymbol{\omega}_i \geq \boldsymbol{\mu}^-_i + \essosc_{Q_i} u =  \esssup_{Q_i} u,
$$
by assumption. Now we are in a point of using Lemma~\ref{l.red_oscillation_nearzero}, which implies that 
$$
Q_{i+1} \subset Q_i 
\quad
\text{ and }
\quad
\essosc_{Q_{i+1}} u \leq \boldsymbol{\omega}_{i+1},
$$
which proves~\eqref{e.iteration_nearzero}.

\subsection{Reduction of oscillation above and away from zero} \label{s.iteration_awayzero+}
Suppose that $i_o \in \N_0$ is the first index for which there holds that 
\begin{equation} \label{e.abovezero_io}
\boldsymbol{\mu}^-_{i_o} > \tfrac14 \boldsymbol{\omega}_{i_o}.
\end{equation}
Now we define $\theta_* = \left( \boldsymbol{\mu}^+_{i_o} \right)^{q-1}$.
In the case $0<q<1$ it directly follows that  $\theta_* \leq \left( \boldsymbol{\omega}_{i_o} \right)^{q-1} = \theta_{i_o}$.
If $q > 1$ and $i_o = 0$ we have $\theta_* = \theta_o$. If $i_o \geq 1$ we deduce that
$$
\tfrac14 \boldsymbol{\omega}_{i_o} < \boldsymbol{\mu}^-_{i_o} = \essinf_{Q_{i_o}} u \leq \esssup_{Q_{i_o-1}} u \leq \boldsymbol{\mu}^+_{i_o-1} \leq \tfrac54 \boldsymbol{\omega}_{i_o-1} \leq \tfrac{5}{4 \delta} \boldsymbol{\omega}_{i_o},
$$
by using~\eqref{e.near_zero} for the index $i_o-1$.
Since ~\eqref{e.abovezero_io} is equivalent to $\boldsymbol{\mu}^+_{i_o} < 5 \boldsymbol{\mu}^-_{i_o}$, this implies that
$$
\boldsymbol{\mu}^+_{i_o} < 5 \boldsymbol{\mu}^-_{i_o} \leq \tfrac{25}{4 \delta} \boldsymbol{\omega}_{i_o}.
$$
For the cylinders $i > i_o$ let 
$$
Q_i:= Q_{\hat \rho_i,\theta_* \hat \rho_i^2} \quad \text{ with } \hat \rho_i = \hat\lambda^{i-i_o} \left( \tfrac{4 \delta}{25} \right)^\frac{(q-1)_+}{2} \rho_{i_o},\ \hat\lambda := \sqrt{\frac{\nu_o}{8}},
$$
and let $Q^*_{i_o}= Q_{\hat \rho_{i_o},\theta_* \hat \rho_{i_o}^2} \subset Q_{i_o}$, where $Q_{i_o}$ is the cylinder obtained in the last section after application of the last iteration step or it is $Q_{i_o} = Q_o$ if $i_o = 0$.
Now we clearly have that $Q_{i_o} \supset Q_{i_o}^{*} \supset Q_{i_o+1} \supset \ldots$ and 
$$
\essinf_{Q^*_{i_o}} u \geq \essinf_{Q_{i_o}} u = \boldsymbol{\mu}^-_{i_o}\quad \text{ and }\quad \esssup_{Q^*_{i_o}} u \leq \esssup_{Q_{i_o}} u \leq \boldsymbol{\mu}^+_{i_o}.
$$
Further, we find that
$$
\sup_{Q^*_{i_o}} \psi \leq \sup_{Q_{i_o}} \psi \leq \tfrac12 \left( \boldsymbol{\mu}^+_{i_o} + \boldsymbol{\mu}^-_{i_o} \right),
$$
where the last inequality follows from Section~\ref{s.iteration_nearzero} if $i_o > 0$ and from~\eqref{e.psi} if $i_o = 0$.
Finally, we obtain that
$$
\boldsymbol{\mu}^+_{i_o} - \boldsymbol{\mu}^-_{i_o} = \boldsymbol{\omega}_{i_o} \geq 2 \osc_{Q_{i_o-1}} \psi \geq 2\osc_{Q_{i_o}} \psi \geq 2\osc_{Q^*_{i_o}} \psi,
$$
which follows from Section~\ref{s.iteration_nearzero} if $i_o>0$, and from~\eqref{e.psi_2} if $i_o = 0$. Now we are in the position to use Lemma~\ref{l.reduction_of_osc_az+}, which implies
$$
\essosc_{Q_{i}} u \leq \boldsymbol{\omega}_i \quad \text{ for all } i > i_o.
$$

\subsection{Reduction of oscillation below and away from zero} \label{s.iteration_awayzero-}
Suppose that $i_o \in \N_0$ is the first index for which there holds that 
\begin{equation} \label{e.belowzero_io}
\boldsymbol{\mu}^+_{i_o} < -\tfrac14 \boldsymbol{\omega}_{i_o}.
\end{equation}
Now we define $\theta_* = \left( - \boldsymbol{\mu}^-_{i_o} \right)^{q-1}$. If $0<q<1$, it follows that $\theta_* \leq \left( \boldsymbol{\omega}_{i_o} \right)^{q-1} = \theta_{i_o}$. In the case $q> 1$, $\theta_* = \theta_{i_o}$ if $i_o = 0$, and if $i_o > 0$ we deduce that
$$
- \tfrac14 \boldsymbol{\omega}_{i_o} > \boldsymbol{\mu}^+_{i_o} \geq \esssup_{Q_{i_o}} u \geq \essinf_{Q_{i_o-1}} u = \boldsymbol{\mu}^-_{i_o-1} \geq - \tfrac54 \boldsymbol{\omega}_{i_o-1} \geq - \tfrac{5}{4\delta} \boldsymbol{\omega}_{i_o}
$$
by using the condition~\eqref{e.near_zero} for the index $i_o-1$ in the penultimate inequality.
Since~\eqref{e.belowzero_io} is equivalent to $5 \boldsymbol{\mu}^+_{i_o} < \boldsymbol{\mu}^-_{i_o}$, this implies
$$
- \boldsymbol{\mu}^-_{i_o} < - 5 \boldsymbol{\mu}^+_{i_o} \leq \tfrac{25}{4 \delta} \boldsymbol{\omega}_{i_o}.
$$
For the cylinders $i > i_o$ we define 
$$
Q_i:= Q_{\hat \rho_i,\theta_* \hat \rho_i^2}, \quad \text{ with } \hat \rho_i = \hat\lambda^{i-i_o} \left( \tfrac{4 \delta }{25} \right)^\frac{(q-1)_+}{2} \rho_{i_o},\ \hat\lambda := \sqrt{\frac{\nu_o}{8}},
$$
and let $Q^*_{i_o}= Q_{\hat \rho_{i_o},\theta_* \hat \rho_{i_o}^2} \subset Q_{i_o}$, where $Q_{i_o}$ is the cylinder obtained in Section~\ref{s.iteration_nearzero} after the last iteration step. Observe that we have 
$$
\essinf_{Q^*_{i_o}} u \geq \essinf_{Q_{i_o}} u = \boldsymbol{\mu}^-_{i_o}\quad \text{ and }\quad \esssup_{Q^*_{i_o}} u \leq \esssup_{Q_{i_o}} u \leq \boldsymbol{\mu}^+_{i_o}.
$$
Further, we find that
$$
\sup_{Q^*_{i_o}} \psi \leq \sup_{Q_{i_o}} \psi \leq \tfrac12 \left( \boldsymbol{\mu}^+_{i_o} + \boldsymbol{\mu}^-_{i_o} \right),
$$
where the last inequality follows from Section~\ref{s.iteration_nearzero} if $i_o > 0$ and from~\eqref{e.psi} if $i_o = 0$.
Finally, from the results of Section~\ref{s.iteration_nearzero} when $i_o> 0$ and from~\eqref{e.psi_2} when $i_o = 0$ we conclude that
$$
\boldsymbol{\mu}^+_{i_o} - \boldsymbol{\mu}^-_{i_o} = \boldsymbol{\omega}_{i_o} \geq 2 \osc_{Q_{i_o-1}} \psi \geq 2\osc_{Q_{i_o}} \psi \geq 2\osc_{Q^*_{i_o}} \psi.
$$
Now we are in the position to use Lemma~\ref{l.reduction_of_osc_az-}, which implies
$$
\essosc_{Q_i} u \leq \boldsymbol{\omega}_i \quad \text{ for all } i > i_o.
$$

\subsection{Proof concluded}
We define
$$
	r_i :=
	\left\{
	\begin{array}{ll}
		\left( \tfrac{\delta^i}{4} \boldsymbol{\omega}_o \right)^\frac{q-1}{2} \hat \rho_i
		&\text{if } q > 1, \\
		\min \left\{1, \left( \tfrac{25}{4} \boldsymbol{\omega}_o \right)^\frac{q-1}{2} \right\} \hat \rho_i
		&\text{if } 0<q<1,
	\end{array}
	\right.
$$
where $\hat \rho_i = \rho_i$ for $i < i_o$.
We claim that
$$
Q_{r_i} := Q_{r_i, r_i^2} \subset Q_i
$$
for any $i \in \N_0$. 
Indeed, if $q>1$ we know that $\theta_i = \boldsymbol{\omega}_i^{q-1} \geq \left( \delta^i \boldsymbol{\omega}_o \right)^{q-1}$ for $i \leq i_o$ and $\theta_* \geq \left( \tfrac14 \boldsymbol{\omega}_{i_o} \right)^{q-1} \geq \left( \tfrac{\delta^{i_o}}{4} \boldsymbol{\omega}_o \right)^{q-1}$.
Moreover, if $0<q<1$ we have that $\theta_i = \boldsymbol{\omega}_i^{q-1} \geq \boldsymbol{\omega}_o^{q-1}$ for $i \leq i_o$ and $\theta_* \geq \left( \tfrac{25}{4} \boldsymbol{\omega}_{i_o-1} \right)^{q-1} \geq \left( \tfrac{25}{4} \boldsymbol{\omega}_o \right)^{q-1}$ by definition of $i_o$.
Therefore, we find that 
\begin{equation} \label{e.osc_u}
\essosc_{Q_{r_i}} u \leq \essosc_{Q_i} u \leq \boldsymbol{\omega}_i \leq \delta^i \boldsymbol{\omega}_o + 2 \sum_{j=0}^{i-1} \delta^j \osc_{Q_{i-1-j}} \psi.
\end{equation}
When $i-1-j\leq i_o$, by the fact that $\rho_{i-1-j} \leq \lambda^{i-1-j} \rho_o$ and the definition of $\lambda$ we estimate
\begin{align*}
\osc_{Q_{i-1-j}} \psi &\leq c \left( \rho_{i-1-j}^\beta + ( \theta_{i-1-j} \rho_{i-1-j}^2)^\frac{\beta}{2}  \right) = c\left( 1 + \boldsymbol{\omega}_{i-1-j}^\frac{\beta(q-1)}{2} \right) \rho_{i-1-j}^\beta \\
&\leq c \left( 1+ \delta^{- \frac{\beta(i-1-j)(1-q)_+}{2}} \boldsymbol{\omega}_o^\frac{\beta (q-1)}{2} \right) \rho_{i-1-j}^\beta \\
&\leq c \left( 1 + \boldsymbol{\omega}_o^\frac{\beta(q-1)}{2} \right) \delta^{-\frac{(i-1-j)\beta (1-q)_+}{2}} \rho_{i-1-j}^\beta \\
&\leq c (1+ \boldsymbol{\omega}_o^\frac{\beta(q-1)}{2} ) \left( \tfrac{\nu_o}{8} \right)^\frac{\beta (i-1-j)}{2} \rho_o^\beta.
\end{align*}
If $i-1-j > i_o$, by the definitions of $\lambda$ and $\hat \lambda$ we obtain in a similar way that
\begin{align*}
\osc_{Q_{i-1-j}} \psi &\leq c \left( \hat \rho_{i-1-j}^\beta + (\theta_* \hat \rho_{i-1-j}^2)^\frac{\beta}{2} \right) \leq c \left( 1 + \theta_*^\frac{\beta}{2} \right) \hat \rho_{i-1-j}^\beta \\
&\leq c \left(1 + \delta^{-\frac{i_o\beta (1-q)_+}{2}} \boldsymbol{\omega}_o^\frac{\beta (q-1)}{2} \right) \hat \rho_{i-1-j}^\beta \leq c \left( 1 + \boldsymbol{\omega}_o^\frac{\beta (q-1)}{2} \right) \delta^{- \frac{i_o \beta (1-q)_+}{2}} \hat \rho_{i-1-j}^\beta \\
&\leq c \big( 1 + \boldsymbol{\omega}_o^\frac{\beta (q-1)}{2} \big) \left( \tfrac{\nu_o}{8} \right)^\frac{\beta (i-1-j)}{2} \rho_o^\beta.
\end{align*}
Using the estimates above in~\eqref{e.osc_u} gives us
\begin{equation*}
\essosc_{Q_{r_i}} u \leq \delta^i \boldsymbol{\omega}_o + c (1 + \boldsymbol{\omega}_o^\frac{\beta (q-1)}{2}) \rho_o^\beta \sum_{j=0}^{i-1} \delta^j \left( \tfrac{\nu_o}{8} \right)^\frac{\beta (i-1-j)}{2}.
\end{equation*}
Setting 
$$
\tau := \max \left\{ \delta, \left( \tfrac{\nu_o}{8} \right)^\frac{\beta}{2} \right\},
$$
we conclude from the preceding inequality that
\begin{equation*}
\essosc_{Q_{r_i}} u \leq \delta^i \boldsymbol{\omega}_o + c i \tau^{i-1} (1 + \boldsymbol{\omega}_o^\frac{\beta (q-1)}{2}) \rho_o^\beta.
\end{equation*}
By the fact that $i \sqrt{\tau}^i \leq -2/(e \log \tau)$, we infer
\begin{equation*}
\essosc_{Q_{r_i}} u \leq \delta^i \boldsymbol{\omega}_o + c \sqrt{\tau}^{i} (1 + \boldsymbol{\omega}_o^\frac{\beta (q-1)}{2}) \rho_o^\beta
\end{equation*}
for a constant $c = c(n,q,C_o,C_1,\beta, [\psi]_{0;\beta,\frac{\beta}{2}})$. Let us define $\eta = \left( \tfrac{\delta}{5} \right)^{(q-1)_+} \lambda $ and
$$
\gamma_1 := \min \left\{ \frac{\log \tau}{2 \log \eta},  \frac{2\beta}{2+\beta |q-1|} \right\},
$$
and observe that $\gamma_1 \leq \frac{\log \delta}{\log \eta}$.
By using the fact that $\eta^i \boldsymbol{\omega}_o^\frac{q-1}{2} \leq \frac{r_i}{\rho_o}$,~\eqref{e.rho_o} and $\boldsymbol{\omega}_o \leq 1$ in the case $q> 1$, and $\boldsymbol{\omega}_o^{q-1} \leq \rho_o^{-\eps}$,~\eqref{e.rho_o}, $\eta^i \leq \frac{\hat \rho_i}{\rho_o}$ and $\hat \rho_i \leq c(q,\| u \|_\infty) r_i$ when $0<q<1$, we may estimate
\begin{align} \label{e.osc_u_ri}
\essosc_{Q_{r_i}} u &\leq \eta^{\gamma_1 i} \boldsymbol{\omega}_o + c \eta^{\gamma_1 i} (1  + \boldsymbol{\omega}_o^\frac{\beta (q-1)}{2}) \rho_o^\beta
\leq c r_i^{\gamma_1},
\end{align}
for a constant $c = c(n,q,C_o,C_1,\beta,[\psi]_{0;\beta,\frac{\beta}{2}}, \|u\|_\infty, R)$. We conclude this section by showing that the last estimate holds for an arbitrary radius $r \in (0,R]$. First, let us consider $r \in (0,r_o)$. Choose $i\in \N_0$ such that $r_{i+1} < r < r_i$. By~\eqref{e.osc_u_ri} we find that
\begin{align*}
\essosc_{Q_{r}} u &\leq \essosc_{Q_{r_i}} u \leq c r_i^{\gamma_1} \leq c \left( \tfrac{\delta^i}{4} \boldsymbol{\omega}_o \right)^{\gamma_1 \frac{(q-1)_+}{2}} \hat \rho_i^{\gamma_1} \leq c \eta^{-\gamma_1} \left( \tfrac{\delta^i}{4} \boldsymbol{\omega}_o \right)^{\gamma_1 \frac{(q-1)_+}{2}} \hat \rho_{i+1}^{\gamma_1} \\
& \leq c \left( \delta^\frac{(q-1)_+}{2} \eta \right)^{-\gamma_1} r_{i+1}^{\gamma_1} \leq c r^{\gamma_1}.
\end{align*}
Next, let us assume that $r \in [r_o, \rho_o)$. By~\eqref{e.rho_o}$_2$ and since $\rho_o \leq c(q,n,\|u\|_\infty)r_o$ when $0<q<1$ and $r_o \geq c(q) \rho_o^{1+ \beta\frac{q-1}{2}}$ when $q > 1$ we obtain that
$$
\essosc_{Q_r} u \leq \essosc_{Q_{\rho_o,\rho_o^{2-\eps}}} u \leq 2 \Psi(\rho_o) \leq c \rho_o^{\gamma_o} \leq c \frac{\rho_o^{\gamma_o}}{r_o^{\gamma_1}} r^{\gamma_1} \leq c r^{\gamma_1}.
$$
In the remaining case $r\in [\rho_o, R]$, we have that
$$
\essosc_{Q_r} u \leq \essosc_{Q_{r,r^{2-\eps}}} u \leq 2 \Psi(r) \leq c r^{\gamma_o} \leq c r^{\gamma_1}
$$
Altogether, we conclude that
$$
	\essosc_{Q_{r,r^2}} u
	\leq
	c r^{\gamma_1}
	\quad \text{ for all } r \in (0, R),
$$
which shows that $u$ is H\"older continuous at the arbitrary point $(x_o, t_o) \in \Omega_T$.

\appendix
\section{Rescaling argument} \label{appendix_a}
Let $M > 1$, and consider
$$
	\tilde{u}(x,t) :=
	\frac{1}{M} u(x, M^{q-1} t)
	\quad \text{ and } \quad
	\widetilde{\psi}(x,t) :=
	\frac{1}{M} \psi(x, M^{q-1} t)
$$
for $(x,t) \in \Omega_{\widetilde{T}} := \Omega \times (0,\widetilde{T}) := \Omega \times (0, M^{1-q} T)$.
We claim that $\tilde{u}$ is a weak solution to the obstacle problem
with obstacle $\widetilde{\psi}$ and
$$
	\partial_t \big( |\tilde{u}|^{q-1} \tilde{u} \big)
	- \Div \widetilde{\mathbf{A}}(x,t,\tilde{u}, \nabla\tilde{u}) = 0
	\quad \text{in } \Omega_{\widetilde{T}}
$$
in the sense of Definition~\ref{d.obstacle_weaksol}, where the vector-field
$$
	\widetilde{\mathbf{A}}(x,t,v,\zeta)
	:=
	\frac{1}{M} \mathbf{A}(x, M^{q-1}t, Mv, M\zeta)
$$
satisfies the same structure conditions as $\mathbf{A}$.
For $\widetilde{\psi} \in C^0(\Omega_{\widetilde{T}})$ we compute that
\begin{align*}
	[\widetilde{\psi}]_{C^{0;\beta,\frac{\beta}{2}}}
	&=
	\sup_{(x,\tilde{t}), (y,\tilde{s}) \in \Omega_{\widetilde{T}}}
	\frac{|\widetilde{\psi}(x,\tilde{t}) - \widetilde{\psi}(y,\tilde{s})|}{\max\{ |x-y|^\beta, |\tilde{t} - \tilde{s}|^\frac{\beta}{2} \}} \\
	&=
	\frac{1}{M}
	\sup_{(x,t), (y,s) \in \Omega_T}
	\frac{|\psi(x,t) - \psi(y,s)|}{\max\{ |x-y|^\beta, M^{(1-q)\frac{\beta}{2}} |t-s|^\frac{\beta}{2} \}} \\
	&\leq
	M^{(q-1)_+\frac{\beta}{2} - 1}
	[\psi]_{C^{0;\beta,\frac{\beta}{2}}}.
\end{align*}
Hence, we have that $\widetilde{\psi} \in C^{0;\beta,\frac{\beta}{2}}(\Omega_{\widetilde{T}})$.
Further, it is clear that there holds $\tilde{u} \geq \widetilde{\psi}$ a.e.~in $\Omega_{\widetilde{T}}$ and we compute that
$$
	\tilde{u} \in
	C^0((0,\widetilde{T}); L^{q+1}_{\loc}(\Omega))
	\cap
	L^2_{\loc}(0,\widetilde{T}; H^1_{\loc}(\Omega)),
$$
i.e., we find that $\tilde{u} \in K_{\widetilde{\psi}}(\Omega_{\widetilde{T}})$.
Now, we consider $\widetilde{\varphi} \in C^\infty_0(\Omega_{\widetilde{T}};\R_{\geq 0})$
and $\tilde{v} \in K_{\widetilde{\psi}}'(\Omega_{\widetilde{T}})$.
First, observe that $\varphi(x,t) := \widetilde{\varphi}(x, M^{1-q} t) \in C^\infty_0(\Omega_T;\R_{\geq 0})$.
Furthermore, we state that $v(x,t) := M \tilde{v}(x, M^{1-q} t)$
is an admissible comparison map related to $u$ and $\psi$.
To this end, check that $v \in C^0((0,T); L^{q+1}_{\loc}(\Omega)) \cap L^2_{\loc}(0,T; H^1_{\loc}(\Omega))$ and
$$
	v(x,t) =
	M \tilde{v}(x, M^{1-q} t)
	\geq
	M \widetilde{\psi}(x, M^{1-q} t)
	=
	\psi(x,t)
$$
for a.e.~$(x,t) \in \Omega_T$.
Moreover, compute that
$$
	\partial_t v(x,t)
	=
	\partial_t (M \tilde{v}(x, M^{1-q} t))
	=
	M^{2-q} \partial_t \tilde{v}(x, M^{1-q} t)
	\in L^{q+1}(\Omega_T).
$$
Altogether, this implies that $v \in K_\psi'(\Omega_T)$.
With these considerations at hand, a straightforward computation shows that
\begin{align*}
	\llangle &\partial_{\tilde t} \boldsymbol{\tilde{u}}^q, \widetilde{\varphi} (\tilde{v} - \tilde{u})\rrangle
	+ \iint_{\Omega_{\widetilde{T}}}
	\widetilde{\mathbf{A}}(x,\tilde{t},\tilde{u}, \nabla\tilde{u}) \cdot \nabla \left(\widetilde{\varphi} (\tilde{v} - \tilde{u}) \right) \, \d x \d \tilde{t} \\
	&=
	M^{-(q+1)}
	\left[
	\llangle \partial_t \u^q, \varphi (v - u)\rrangle
	+ \iint_{\Omega_T} \mathbf{A}(x,t,u,\nabla u) \cdot \nabla \left(\varphi (v - u) \right) \, \d x \d t
	\right]
	\geq 0,
\end{align*}
since $u$ is a weak solution associated with the obstacle $\psi$ in the sense of Definition~\ref{d.obstacle_weaksol}.
If we know that $\tilde{u}$ is H\"older continuous with H\"older exponent $\gamma_1 \in (0,1)$, by using the definition of $\tilde{u}$ we compute that also $u$ is H\"older continuous with the same H\"older exponent.

\end{document}